\def\th@plain{%
  \thm@notefont{}
  \itshape 
}
\def\th@definition{%
  \thm@notefont{}
  \normalfont 
}
\newtheorem{lemma}{Lemma}[section]
\newtheorem{proposition}[lemma]{Proposition}
\newtheorem{remark-definition}[lemma]{Remark-Definition}
\newtheorem{theorem}[lemma]{Theorem}
\newtheorem{corollary}[lemma]{Corollary}
\newtheorem{proposition-conjecture}[lemma]{Proposition-conjecture}
\theoremstyle{definition}
\newtheorem{definition}[lemma]{Definition}
\newtheorem{remark}[lemma]{Remark}
\begin{document}

\title{Integrability of bi-Hamiltonian systems using Casimir functions and characteristic polynomials}
\author{I.\,K.~Kozlov\thanks{No Affiliation, Moscow, Russia. E-mail: {\tt ikozlov90@gmail.com} }
}
\date{}

\maketitle

\begin{abstract} In this paper we prove that for a pencil of compatible Poisson brackets $\mathcal{P} = \left\{\mathcal{A} + \lambda\mathcal{B} \right\}$ the local Casimir functions of Poisson brackets $\mathcal{A} + \lambda \mathcal{B}$  and coefficients of the characteristic polynomial $p_{\mathcal{P}}$ commute w.r.t. all Poisson brackets of the pencil $\mathcal{P}$. We give a criterion when this family of functions is complete. These results generalize previous constructions of complete commutative subalgebras in the symmetric algebra $S(\mathfrak{g})$ of a finite-dimensional Lie algebra $\mathfrak{g}$ by  A.\,S.~Mishchenko \& A.\,T.~Fomenko,  A.\,V.~Bolsinov \& P.~Zhang and A.\,M.~Izosimov. \end{abstract}

\tableofcontents

\section{Introduction}

It is well-known that bi-Hamiltonian structure and integrability of many systems in physics and mechanics are closely related. Bi-Hamiltonian structures often allow us to construct a natural family of functions in involution. When this family is complete, we obtain an integrable Hamiltonian system. This connection was first observed for infinite-dimensional systems in the pioneering work of F.~Magri 
\cite{Magri78} (this idea was further developed in \cite{Gelfand79}, \cite{Magri84} and \cite{Reiman80}). For finite-dimensional systems on Lie algebras, the fundamental result is the \textbf{argument shift method} by A.\,S.~Mishchenko \& A.\,T.~Fomenko \cite{ArgShift}. It is a generalization of the S.\,V.~Manakov's construction \cite{Manakov76} for the Lie algebra $\operatorname{so}(n)$. The argument shift method has been further developed, leading to several new methods for constructing commutative subalgebras within the symmetric algebra $S(\mathfrak{g})$ of a Lie algebra $\mathfrak{g}$. Here's a brief outline of some milestones in this progress: 

\begin{enumerate}

\item The original argument shift method required polynomial invariants of coadjoint representation. A.V. Brailov's modification (\cite{Bolsinov14}) overcomes this limitation. His approach allows us to construct a commutative \textbf{algebra of polynomial shifts}
$\mathcal{F}_a$ even in the cases when the invariants are not polynomials (see e.g. \cite[Theorem 1]{BolsZhang}).

\item The completeness criterion for the subalgebras $\mathcal{F}_a$  was found by A.\,V.~Bolsinov (\cite{Bolsinov92}).  Later in  \cite{BolsZhang}  A.\,V.~Bolsinov introduced Jordan--Kronecker invariants of a Lie algebra $\mathfrak{g}$. It was proved that \textit{algebra of polynomial shifts $\mathcal{F}_a$ is complete if and only if $\mathfrak{g}$ is of Kronecker type} (see  \cite[Theorem 3]{BolsZhang}). 

\item In \cite{Izosimov14} A.\,M.~Izosimov introduced \textbf{extended Mischenko-Fomenko subalgebras} $\tilde{\mathcal{F}}_a$ and gave a completeness criterion for them. In terms of Jordan--Kronecker invariants that construction was described in   \cite[Section 7]{BolsZhang}. 

\item Then, in  \cite{Kozlov23Shift} the algebra $\tilde{\mathcal{F}}_a$ was extended further to the  \textbf{algebra of shift of semi-invariants} $\mathcal{F}_{a}^{\textrm{si}}$. It was proved that $\mathcal{F}_{a}^{\textrm{si}}$ is complete if and only if $\tilde{\mathcal{F}}_a$ is complete. 

\end{enumerate}

The cornerstone for constructing commutative subalgebras  $\mathcal{F}_a$, $\tilde{\mathcal{F}}_a$ and $\mathcal{F}_{a}^{\textrm{si}}$ for a Lie algebra $\mathfrak{g}$ is the the existence of a pencil of compatible Poisson brackets on the dual space  $\mathfrak{g}^*$. This paper generalizes the results from \cite{BolsZhang}, \cite{Izosimov14}, \cite{Kozlov23Shift} by presenting a similar construction for manifolds $M$ equipped with a pencil of compatible Poisson brackets $\mathcal{P} = \left\{\mathcal{A} + \lambda\mathcal{B} \right\} $:

\begin{enumerate}

\item In Theorem~\ref{Th:CasEigenComm} we show that local Casimir functions of regular Poisson brackets $\mathcal{A} + \lambda \mathcal{B}$  and coefficients of the characteristic polynomial $p_{\mathcal{P}}$ are in involution w.r.t. all Poisson brackets $\mathcal{A} + \lambda \mathcal{B}$.

\item In Theorem~\ref{Th:CompCrit} we give a completeness criterion for the distribution "spanned" by the Casimir functions and the coefficients of $p_{\mathcal{P}}$. 

\end{enumerate}

In Section~\ref{S:LieAlg}  we illustrate the application of Theorems~\ref{Th:CasEigenComm} and \ref{Th:CompCrit} for  compatible Poisson brackets on the dual space of a (finite-dimensional) Lie algebra $\mathfrak{g}$. We demonstrate how these theorems recover previously obtained results about $\mathcal{F}_a$, $\tilde{\mathcal{F}}_a$ and $\mathcal{F}_{a}^{\textrm{si}}$.

While Theorems~\ref{Th:CasEigenComm} and \ref{Th:CompCrit} were more or less well-known to experts in the field (see e.g. \cite[Proposition 2.1]{Izosimov14}), they lacked a formal proof in the literature. This article addresses this gap by providing rigorous proofs of these theorems.

\textbf{Conventions.} All manifolds (functions, Poisson brackets, etc)  are  either real $C^{\infty}$-smooth or complex analytic. Some property holds ``almost everywhere'' or ``at a generic point'' of a manifold $M$ if it holds on an open dense subset of $M$.  We denote $\bar{\mathbb{C}} = \mathbb{C} \cup \left\{ \infty \right\}$. 

\par\medskip

The author would like to thank A.\,V.~Bolsinov and A.\,M.~Izosimov for useful comments. 

\section{Basic definitions}

\subsection{Jordan--Kronecker theorem} 

First, let us recall the canonical form for a pair of skew-symmetric forms. This theorem, which we call the Jordan--Kronecker theorem, is a classical result that goes back to Weierstrass and Kronecker.  A proof of it can be found in \cite{Thompson}, which is based on \cite{Gantmaher88}.

\begin{theorem}[Jordan--Kronecker theorem]\label{T:Jordan-Kronecker_theorem}
Let $A$ and $B$ be skew-symmetric bilinear forms on a
finite-dimension vector space $V$ over a field $\mathbb{K}$ with $\textmd{char }  \mathbb{K} =0$. If the field $\mathbb{K}$
is algebraically closed, then there exists a basis of the space $V$
such that the matrices of both forms $A$ and $B$ are block-diagonal
matrices:

{\footnotesize
$$
A =
\begin{pmatrix}
A_1 &     &        &      \\
    & A_2 &        &      \\
    &     & \ddots &      \\
    &     &        & A_k  \\
\end{pmatrix}
\quad  B=
\begin{pmatrix}
B_1 &     &        &      \\
    & B_2 &        &      \\
    &     & \ddots &      \\
    &     &        & B_k  \\
\end{pmatrix}
$$
}

where each pair of corresponding blocks $A_i$ and $B_i$ is one of
the following:

\begin{itemize}

\item Jordan block with eigenvalue $\lambda_i \in \mathbb{K}$: {\scriptsize  \begin{equation} \label{Eq:JordBlockL} A_i =\left(
\begin{array}{c|c}
  0 & \begin{matrix}
   \lambda_i &1&        & \\
      & \lambda_i & \ddots &     \\
      &        & \ddots & 1  \\
      &        &        & \lambda_i   \\
    \end{matrix} \\
  \hline
  \begin{matrix}
  \minus\lambda_i  &        &   & \\
  \minus1   & \minus\lambda_i &     &\\
      & \ddots & \ddots &  \\
      &        & \minus1   & \minus\lambda_i \\
  \end{matrix} & 0
 \end{array}
 \right)
\quad  B_i= \left(
\begin{array}{c|c}
  0 & \begin{matrix}
    1 & &        & \\
      & 1 &  &     \\
      &        & \ddots &   \\
      &        &        & 1   \\
    \end{matrix} \\
  \hline
  \begin{matrix}
  \minus1  &        &   & \\
     & \minus1 &     &\\
      &  & \ddots &  \\
      &        &    & \minus1 \\
  \end{matrix} & 0
 \end{array}
 \right)
\end{equation}} \item Jordan block with eigenvalue $\infty$ {\scriptsize \begin{equation} \label{Eq:JordBlockInf}
A_i = \left(
\begin{array}{c|c}
  0 & \begin{matrix}
   1 & &        & \\
      &1 &  &     \\
      &        & \ddots &   \\
      &        &        & 1   \\
    \end{matrix} \\
  \hline
  \begin{matrix}
  \minus1  &        &   & \\
     & \minus1 &     &\\
      &  & \ddots &  \\
      &        &    & \minus1 \\
  \end{matrix} & 0
 \end{array}
 \right)
\quad B_i = \left(
\begin{array}{c|c}
  0 & \begin{matrix}
    0 & 1&        & \\
      & 0 & \ddots &     \\
      &        & \ddots & 1  \\
      &        &        & 0   \\
    \end{matrix} \\
  \hline
  \begin{matrix}
  0  &        &   & \\
  \minus1   & 0 &     &\\
      & \ddots & \ddots &  \\
      &        & \minus1   & 0 \\
  \end{matrix} & 0
 \end{array}
 \right)
 \end{equation} } \item   Kronecker block {\scriptsize \begin{equation} \label{Eq:KronBlock} A_i = \left(
\begin{array}{c|c}
  0 & \begin{matrix}
   1 & 0      &        &     \\
      & \ddots & \ddots &     \\
      &        & 1    &  0  \\
    \end{matrix} \\
  \hline
  \begin{matrix}
  \minus1  &        &    \\
  0   & \ddots &    \\
      & \ddots & \minus1 \\
      &        & 0  \\
  \end{matrix} & 0
 \end{array}
 \right) \quad  B_i= \left(
\begin{array}{c|c}
  0 & \begin{matrix}
    0 & 1      &        &     \\
      & \ddots & \ddots &     \\
      &        &   0    & 1  \\
    \end{matrix} \\
  \hline
  \begin{matrix}
  0  &        &    \\
  \minus1   & \ddots &    \\
      & \ddots & 0 \\
      &        & \minus1  \\
  \end{matrix} & 0
 \end{array}
 \right)
 \end{equation} }
 \end{itemize}

\end{theorem}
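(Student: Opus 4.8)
The plan is to argue by induction on $\dim V$, after translating the two forms into linear maps $A,B\colon V\to V^{*}$ (with $A(x,y)=\langle Ax,y\rangle$), for which skew-symmetry becomes $A^{*}=-A$ and $B^{*}=-B$. First I would dispose of the common kernel: any vector in $\ker A\cap\ker B$ spans a one-dimensional summand on which both forms vanish, i.e.\ a trivial Kronecker block \eqref{Eq:KronBlock} of index $0$, and such vectors split off directly, so I may assume $\ker A\cap\ker B=0$. The induction is then driven by the dichotomy between a \emph{regular} pencil, where $\det(A+\lambda B)\not\equiv 0$ as a polynomial in $\lambda$, and a \emph{singular} pencil, where $\det(A+\lambda B)\equiv 0$ (this happens in particular whenever $\dim V$ is odd, since a skew form has even rank). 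Since the pencil parameter lives on $\mathbb{P}^{1}$ and the fractional-linear group acts on it, exchanging the roles of $A$ and $B$ and sending any $\lambda_{0}$ to $\infty$, I am free to normalise the chart; in particular the distinction between the finite-eigenvalue Jordan blocks \eqref{Eq:JordBlockL} and the $\infty$-block \eqref{Eq:JordBlockInf} is merely a choice of affine coordinate on the pencil.

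In the regular case I would pick $\lambda_{0}$ with $A+\lambda_{0}B$ nondegenerate and, after the fractional-linear change above, assume $B$ itself is invertible. Then the recursion operator $R=B^{-1}A\colon V\to V$ is defined, and the identities $B(Rx,y)=A(x,y)=B(x,Ry)$ show that $R$ is self-adjoint with respect to the symplectic form $B$. The normal form of a $B$-self-adjoint operator is classical: the generalized eigenspaces of $R$ are mutually $B$-orthogonal, hence $B$-nondegenerate, and each decomposes further into $R$-invariant indecomposable summands on which $R$ consists of a pair of mutually $B$-adjoint Jordan blocks with a common eigenvalue $-\lambda_{i}$. On such a subspace $A=BR$, and writing everything in the associated Darboux-type basis reproduces exactly block \eqref{Eq:JordBlockL}; the $\infty$-block \eqref{Eq:JordBlockInf} is the image of a finite block under the inverse of the chart change. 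This reduces the regular case to the single-operator statement.

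The singular case is the substantive one. Here $\det(A+\lambda B)\equiv 0$ forces the existence of a nonzero polynomial solution $v(\lambda)=\sum_{j=0}^{\epsilon}\lambda^{j}v_{j}$ of $(A+\lambda B)v(\lambda)=0$; I would take one of minimal degree $\epsilon$. Comparing coefficients gives $Av_{0}=0$, then $Av_{j}+Bv_{j-1}=0$ for $1\le j\le\epsilon$, and finally $Bv_{\epsilon}=0$. The span of $v_{0},\dots,v_{\epsilon}$ together with the ``dual'' vectors produced by applying $A$ and $B$ assembles into a $(2\epsilon+1)$-dimensional subspace $U$ carrying precisely the Kronecker pair \eqref{Eq:KronBlock} of index $\epsilon$. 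I would then show that $U$ splits off as a direct summand which is orthogonal to a complement with respect to both $A$ and $B$, so that the forms restrict to a pencil of skew forms on a subspace of strictly smaller dimension, closing the induction.

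The main obstacle is exactly this last splitting in the singular case. Producing the $(2\epsilon+1)$ basis vectors is mechanical, but verifying their linear independence, that the two forms take the stated Kronecker shape on $U$, and---most delicately---that the common $A$- and $B$-orthogonal complement of $U$ is genuinely complementary to $U$ all hinge on the \emph{minimality} of $\epsilon$: were $\epsilon$ not minimal, the orthogonal complement could meet $U$ and the induction would collapse. This is the heart of Kronecker's minimal-index argument, and it is where I expect essentially all of the real effort to lie; the regular case, by contrast, is a routine application of the symplectic normal form of a self-adjoint operator.
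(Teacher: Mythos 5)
The paper itself contains no proof of this theorem: it is quoted as a classical result and the reader is referred to Thompson \cite{Thompson} and Gantmacher \cite{Gantmaher88}, so there is no in-paper argument to compare yours against. Judged on its own terms, your outline follows the standard route of those references: split off $\ker A\cap\ker B$ as trivial Kronecker blocks, distinguish regular from singular pencils via $\det(A+\lambda B)$, treat the regular case through the recursion operator $R=B^{-1}A$, which is self-adjoint for the symplectic form $B$ and whose normal form yields the paired Jordan blocks \eqref{Eq:JordBlockL} (with \eqref{Eq:JordBlockInf} obtained by a M\"obius change of the pencil parameter), and treat the singular case by extracting a minimal-degree polynomial solution of $(A+\lambda B)v(\lambda)=0$. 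The setup is correct, and you have located the difficulty in the right place.

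But the difficulty is not dispatched, and it is essentially the entire content of the theorem beyond the regular case. In the singular case you assert that $v_0,\dots,v_\epsilon$ ``together with the dual vectors produced by applying $A$ and $B$'' assemble into a $(2\epsilon+1)$-dimensional subspace $U$ carrying the Kronecker pair \eqref{Eq:KronBlock}, and that $U$ admits a complement orthogonal with respect to both forms. Neither claim is proved, and neither is automatic: the covectors $Bv_0=-Av_1,\dots,Bv_{\epsilon-1}=-Av_\epsilon$ live in $V^*$, not in $V$, so you must actually produce $\epsilon$ vectors $w_1,\dots,w_\epsilon\in V$ realizing the required pairings $A(w_i,v_j)$ and $B(w_i,v_j)$, then correct them --- again invoking minimality of $\epsilon$ --- so that their span is bi-isotropic and $U^{\perp_A}\cap U^{\perp_B}$ is a genuine complement of $U$ on which the induction can continue; one must also prove that minimality forces the $v_j$ (and the covectors $Bv_j$) to be linearly independent. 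These are exactly the lemmas that occupy the Kronecker minimal-index part of Gantmacher's and Thompson's treatments, so as written this is a plan for a proof rather than a proof. The regular case is acceptable modulo the (genuinely classical and quotable) normal form of a self-adjoint operator on a symplectic space; one small correction there: with the paper's conventions the recursion operator $B_i^{-1}A_i$ on the block \eqref{Eq:JordBlockL} has eigenvalue $\lambda_i$, not $-\lambda_i$ (it is $A_i+\lambda B_i$ that degenerates at $\lambda=-\lambda_i$, which is why the paper forms the characteristic polynomial from $A-\lambda B$).
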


Each Kronecker block is a $(2k_i-1) \times (2k_i-1)$ block, where
$k_i \in \mathbb{N}$. If $k_i=1$, then the blocks are $1\times 1$
zero matrices \[A_i =
\begin{pmatrix}
0
\end{pmatrix}, \qquad B_i=
\begin{pmatrix}
0
\end{pmatrix}.\]  We call a decomposition of $V$ into a sum of subspaces corresponding to the Jordan and Kronecker blocks a \textbf{Jordan-Kronecker decomposition}:  \begin{equation} \label{Eq:JKDecomp} V= \bigoplus_{j=1}^N  V_{J_{\lambda_j, 2n_j}}\oplus  \bigoplus_{i=1}^q V_{K_i}.\end{equation} We call the one-parametric family of skew-symmetric forms $\mathcal{L} = \left\{A + \lambda B \, \, \bigr| \,\, \lambda \in \bar{\mathbb{C}} \right\}$ a \textbf{linear pencil}.

\subsubsection{Real Jordan--Kronecker theorem}

There exists a natural real analog of the Jordan–Kronecker theorem.

\begin{theorem} Any two skew-symmetric bilinear forms A and B on a real finite-dimensional vector space $V$ can be reduced simultaneously to block-diagonal form; besides, each block is either
a Kronecker block or a Jordan block with eigenvalue $\lambda \in \mathbb{R} \cup \left\{\infty \right\}$ or a real Jordan block with
complex eigenvalue $\lambda = \alpha + i \beta$:
\[A_i =\left(
\begin{array}{c|c}
  0 & \begin{matrix}
   \Lambda &E&        & \\
      & \Lambda & \ddots &     \\
      &        & \ddots & E  \\
      &        &        & \Lambda   \\
    \end{matrix} \\
  \hline
  \begin{matrix}
  \minus\Lambda  &        &   & \\
  \minus E   & \minus\Lambda &     &\\
      & \ddots & \ddots &  \\
      &        & \minus E   & \minus \Lambda \\
  \end{matrix} & 0
 \end{array}
 \right)
\quad  B_i= \left(
\begin{array}{c|c}
  0 & \begin{matrix}
    E & &        & \\
      & E &  &     \\
      &        & \ddots &   \\
      &        &        & E   \\
    \end{matrix} \\
  \hline
  \begin{matrix}
  \minus E  &        &   & \\
     & \minus E &     &\\
      &  & \ddots &  \\
      &        &    & \minus E  \\
  \end{matrix} & 0
 \end{array}
 \right) \]
Here $\Lambda$ and $E$ are the $2 \times 2$ matrices
\[ \Lambda =\left( \begin{matrix} \alpha & - \beta \\ \beta & \alpha \end{matrix} \right), \qquad 
 E = \left( \begin{matrix} 1 & 0 \\ 0 & 1 \end{matrix} \right).\]
\end{theorem}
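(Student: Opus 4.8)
The plan is to deduce the real statement from the complex Jordan--Kronecker theorem (Theorem~\ref{T:Jordan-Kronecker_theorem}) by complexification together with Galois descent for the field extension $\mathbb{C}/\mathbb{R}$. First I would set $V_{\mathbb{C}} = V \otimes_{\mathbb{R}} \mathbb{C}$ and extend $A, B$ to $\mathbb{C}$-bilinear skew-symmetric forms $A_{\mathbb{C}}, B_{\mathbb{C}}$. Let $\sigma \colon V_{\mathbb{C}} \to V_{\mathbb{C}}$ be the antilinear involution (complex conjugation) whose fixed-point set is $V$. Since $A$ and $B$ are real, $\sigma$ is compatible with the pencil: $A_{\mathbb{C}}(\sigma u, \sigma v) = \overline{A_{\mathbb{C}}(u,v)}$ and likewise for $B$.

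Next I would apply Theorem~\ref{T:Jordan-Kronecker_theorem} to $(V_{\mathbb{C}}, A_{\mathbb{C}}, B_{\mathbb{C}})$, obtaining a Jordan--Kronecker decomposition $V_{\mathbb{C}} = \bigoplus_{\lambda} U_{\lambda} \oplus K$, where $U_\lambda$ denotes the sum of all Jordan blocks with a fixed eigenvalue $\lambda \in \bar{\mathbb{C}}$ and $K$ is the sum of the Kronecker blocks. The crucial structural input is that this coarse decomposition is canonical: the summands $U_\lambda$ and $K$ are intrinsically determined by the pencil and the block sizes are its invariants, even though the individual blocks inside a given $U_\lambda$ are not unique. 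Because $\sigma$ conjugates the pencil, transporting a Jordan--Kronecker decomposition by $\sigma$ produces another such decomposition in which each eigenvalue is replaced by its conjugate; by canonicity this forces $\sigma(U_\lambda) = U_{\bar\lambda}$ and $\sigma(K) = K$.

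The proof then splits into three cases according to the $\sigma$-orbits. For a real eigenvalue $\lambda \in \mathbb{R} \cup \{\infty\}$ the subspace $U_\lambda$ is $\sigma$-invariant, so by descent $U_\lambda = (U_\lambda \cap V) \otimes_{\mathbb{R}} \mathbb{C}$; since the block sizes are invariants, a reduction of the restriction over $\mathbb{R}$ reproduces the real Jordan blocks of \eqref{Eq:JordBlockL} and \eqref{Eq:JordBlockInf}. For the Kronecker part $K$, which is $\sigma$-invariant, the same descent applies, and the normal form \eqref{Eq:KronBlock} already has integer entries, hence is realizable over $\mathbb{R}$. For a genuinely complex eigenvalue $\lambda = \alpha + i\beta$ with $\beta \neq 0$, the summands $U_\lambda$ and $U_{\bar\lambda}$ are interchanged by $\sigma$, so $U_\lambda \oplus U_{\bar\lambda}$ is $\sigma$-invariant and defined over $\mathbb{R}$. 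Here I would fix a complex Jordan basis of one block in $U_\lambda$, transport it by $\sigma$ to a conjugate basis of the matching block in $U_{\bar\lambda}$, and pass to the real and imaginary parts $\tfrac12(e + \sigma e)$ and $\tfrac{1}{2i}(e - \sigma e)$; a direct computation in this real basis glues the conjugate pair of complex Jordan blocks into a single real Jordan block with the $2 \times 2$ entries $\Lambda$ and $E$.

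The main obstacle is this last gluing step: one must verify by explicit computation that the real-and-imaginary-part basis brings $A_{\mathbb{C}}, B_{\mathbb{C}}$ on $U_\lambda \oplus U_{\bar\lambda}$ into precisely the stated $2 \times 2$-block form and not some other real normal form. The remaining work is bookkeeping, namely checking that the canonical, hence $\sigma$-equivariant, nature of the complex decomposition lets one choose $\sigma$-adapted bases simultaneously on all summands, so that reassembling the blocks yields a single real basis of $V$ realizing the claimed block-diagonal form.
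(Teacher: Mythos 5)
The paper itself offers no proof of this theorem: it is quoted as the classical real analogue of Theorem~\ref{T:Jordan-Kronecker_theorem} (it is contained, e.g., in Thompson's paper cited there), so your proposal can only be judged on its own merits. The complexify-and-descend strategy is indeed the standard route, but as written it rests on a false structural claim: the subspaces $U_{\lambda}$ and $K$ in a Jordan--Kronecker decomposition are \emph{not} intrinsically determined by the pencil; only the eigenvalues and the multisets of block sizes are invariants (only the core, and the mantle, are canonical subspaces). A minimal counterexample: let $V_{\mathbb{C}}$ have basis $f,u,v$ with $A(f,\cdot)=B(f,\cdot)=0$ (a trivial Kronecker block) and $A(u,v)=\lambda_0\in\mathbb{R}$, $B(u,v)=1$ (a $2\times 2$ Jordan block). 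Then $\operatorname{span}_{\mathbb{C}}(u+if,\,v)$ is an equally valid Jordan summand, orthogonal to $\operatorname{span}(f)$ with respect to both forms, yet it is not $\sigma$-invariant even though the eigenvalue is real. Hence $\sigma(U_{\lambda})=U_{\bar{\lambda}}$ does not follow from ``canonicity''; what uniqueness of the invariants actually gives is only that $\sigma$ carries your chosen decomposition to \emph{some other} Jordan--Kronecker decomposition with conjugated eigenvalues. Producing a $\sigma$-adapted decomposition --- which is the real content of the theorem --- still requires an argument, e.g.\ first splitting off the canonical core and mantle over $\mathbb{R}$, reducing to a nondegenerate pencil and running the real normal-form theory of the recursion operator $P=B^{-1}A$ compatibly with the skew form, or carrying out the Kronecker minimal-index reduction (which is valid over any field) directly over $\mathbb{R}$.

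The remaining steps also lean on what is to be proved. Even granting a $\sigma$-stable $U_{\lambda}$ for real $\lambda$, the assertion that ``a reduction of the restriction over $\mathbb{R}$ reproduces the real Jordan blocks'' appeals to a real normal-form theorem for the restricted pencil, i.e.\ to a special case of the statement under proof; likewise ``the normal form already has integer entries, hence is realizable over $\mathbb{R}$'' conflates the existence of an integral matrix presentation with the existence of a real basis realizing it. By contrast, the step you single out as the main obstacle --- gluing a conjugate pair of complex Jordan blocks via real and imaginary parts of a $\sigma$-conjugate pair of bases to obtain the $\Lambda$, $E$ blocks --- is the easy, purely computational part. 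So the proposal identifies the right overall strategy, but the load-bearing step (equivariance and descent of the decomposition) is unjustified as stated.
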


In the real case in the Jordan-Kronecker decomposition we should "group together" subspaces corresponding to pairs of complex conjugate eigenvalues $\alpha_j \pm i \beta_j$:  \[  V= \bigoplus_{j=1}^{N_1}  V_{J_{\lambda_j, 2n_j}} \oplus  \bigoplus_{j=1}^{N_2}  V_{J_{\alpha_j \pm \beta_j, 4m_j}} \oplus  \bigoplus_{i=1}^q V_{K_i}.\]

\subsubsection{Characteristic polynomial}
The \textbf{rank} of a linear pencil $\mathcal{L} = \left\{ A + \lambda B\right\}$ is \[ \operatorname{rk} \mathcal{L} = \max_{\lambda \in\bar{\mathbb{C}}} \operatorname{rk} (A +\lambda B).\] A value $\lambda_0 \in \bar{\mathbb{C}}$ is \textbf{regular} if $\operatorname{rk} A_{\lambda_0} = \operatorname{rk} \mathcal{L}$. We also call $A_{\lambda_0}$ a regular form of the pencil $\mathcal{L}$.

\begin{definition} Let $\mathcal{L} = \left\{A+ \lambda B \right\}$ be a linear pencil with rank $\operatorname{rk} \mathcal{L} = r$. Take all $r \times r$ diagonal minors $\Delta_{i_1,\dots i_r}$ of the matrix\footnote{Note the ``minus" sign. We take the matrix $A - \lambda B$ in order to get $(\lambda - \lambda_0)^p$ as the characteristic polynomial for a $p \times p$ Jordan block with eigenvalue $\lambda_0$.} $A - \lambda B$. The \textbf{characteristic polynomial} $p_{\mathcal{L}}(\lambda)$ is the greatest common divisor of the Pfaffians of these minors: \[ p_{\mathcal{L}}(\lambda) = \operatorname{gcd} \left\{ \operatorname{Pf}\left(\Delta_{i_1,\dots i_r} \right) \, \, \bigr| \, \, 1 \leq i_1 < \dots < i_r \leq n \right\} .\]

\end{definition}

\begin{remark} The characteristic polynomial  $p_{\mathcal{L}}(\lambda)$ is defined up to multiplication by a constant. In this paper we consider pencils $A+\lambda B$ with finite eigenvalues (i.e. $B$ is regular). To avoid ambiguity we consider $p_{\mathcal{L}}(\lambda)$ to be a monic polynomial (i.e. its leading coefficient is equal to $1$): \[  p_{\mathcal{L}}(\lambda) = \lambda^N + \dots . \]
\end{remark}

If $B$ is nondegenerate, we can consider the recursion operator $P = B^{-1} A$. Then the characteristic polynomial is given by \[\operatorname{det}(P - \lambda E) = p_{\mathcal{L}}(\lambda)^2. \] In particular, for one Jordan $p \times p$ block with eignevalue $\lambda_0$  the characteristic polynomial $p_{\mathcal{L}}(\lambda)$ is $\left(\lambda - \lambda_0 \right)^p$. In the general case, we get the product of such polynomials for the Jordan blocks in a Jordan--Kronecker decomposition \eqref{Eq:JKDecomp}: \[p_{\mathcal{L}}(\lambda) = \prod_{j=1}^{N} \left(\lambda - \lambda_j\right)^{n_j} \] (for details, see e.g. \cite{{Gantmaher88}}).

\subsubsection{Core and mantle subspaces}

\begin{definition} Consider a pencil of skew-symmetric forms $\left\{ A_{\lambda} = A + \lambda B\right\}$.

\begin{itemize} 

\item The \textbf{core} subspace is \[ K = \sum_{\lambda - regular} \operatorname{Ker} A_{\lambda}. \]

\item The \textbf{mantle} subspace is direct sum of the core subspace and the Jordan blocks from a Jordan--Kronecker decomposition \eqref{Eq:JKDecomp}: \[ M = K \oplus  \bigoplus_{j=1}^N  V_{J_{\lambda_j, 2n_j}}. \]

\end{itemize}

\end{definition} 

There is a simple description of the core subspace. We call a basis $e_1, \dots, e_{k_i-1},  f_1, \dots, f_{k_i}$   of a Kronecker block \textbf{standard} if the linear pencil has the form \eqref{Eq:KronBlock}.

\begin{proposition} \label{Cor:CoreMantle} For any JK decomposition~\eqref{Eq:JKDecomp} the core subspace $K$ of $V$ is the direct sum of core subspaces of Kronecker subspaces $V_{K_i}$: \[ K = \bigoplus_{i=1}^q \left(K \cap V_{K_i}\right).\] If $e_1, \dots, e_{k_i-1},  f_1, \dots, f_{k_i}$ is a standard basis of $V_{K_i}$, then the core subspace of $V_{K_i}$ is \[K \cap V_{K_i} = \operatorname{span} \left(f_1, \dots, f_{k_i} \right).\] \end{proposition}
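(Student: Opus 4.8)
The plan is to exploit the block-diagonal structure of the pencil with respect to the Jordan--Kronecker decomposition \eqref{Eq:JKDecomp}. Since both $A$ and $B$ are block-diagonal, so is every form $A_\lambda = A + \lambda B$, whence $\operatorname{Ker} A_\lambda = \bigoplus_j \operatorname{Ker}\left(A_\lambda|_{V_{J_{\lambda_j,2n_j}}}\right) \oplus \bigoplus_i \operatorname{Ker}\left(A_\lambda|_{V_{K_i}}\right)$. First I would pin down the regular values by inspecting the blocks \eqref{Eq:JordBlockL}, \eqref{Eq:JordBlockInf}, \eqref{Eq:KronBlock}: each Kronecker block has constant rank $2k_i-2$ with a one-dimensional kernel for every finite $\lambda$, whereas each Jordan block is degenerate at only finitely many (at most one) finite values of $\lambda$. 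Consequently a finite $\lambda$ is regular exactly when it avoids that finite set, and at such $\lambda$ every Jordan summand contributes a trivial kernel. This already gives $\operatorname{Ker} A_\lambda \subseteq \bigoplus_i V_{K_i}$ for regular $\lambda$, so summing over regular $\lambda$ and using that the $V_{K_i}$ are in direct sum yields $K = \bigoplus_i W_i$, where $W_i := \sum_{\lambda \text{ reg}} \operatorname{Ker}\left(A_\lambda|_{V_{K_i}}\right) \subseteq V_{K_i}$; a one-line directness argument then identifies $W_i$ with $K \cap V_{K_i}$.

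Next I would compute $\operatorname{Ker}\left(A_\lambda|_{V_{K_i}}\right)$ explicitly in a standard basis $e_1,\dots,e_{k_i-1},f_1,\dots,f_{k_i}$. Reading off \eqref{Eq:KronBlock}, the only nonzero pairings of $A_\lambda$ are $A_\lambda(e_a,f_a)=1$ and $A_\lambda(e_a,f_{a+1})=\lambda$. Writing a kernel vector as $\sum_a \alpha_a e_a + \sum_b \beta_b f_b$ and pairing against the $f_d$ forces $\alpha_d + \lambda\alpha_{d-1}=0$, which cascades from $\alpha_1=0$ to give $\alpha_a = 0$ for all $a$; pairing against the $e_c$ gives the recursion $\beta_c + \lambda\beta_{c+1}=0$. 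Hence the kernel is one-dimensional and spanned by
\[ v(\lambda) = \sum_{b=1}^{k_i} (-\lambda)^{k_i - b} f_b, \]
a vector polynomial in $\lambda$ of degree $k_i-1$ whose coefficients are, up to sign, exactly $f_1,\dots,f_{k_i}$.

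Finally, I would pass from the individual kernels to the core of the block. The regular values form a cofinite, hence infinite, subset, so evaluating $v(\lambda)$ at more than $k_i$ regular values and inverting the corresponding Vandermonde matrix recovers each coefficient; therefore $W_i = \sum_{\lambda \text{ reg}} \langle v(\lambda)\rangle = \operatorname{span}(f_1,\dots,f_{k_i})$, which is the claimed description, the degenerate case $k_i=1$ (a $1\times 1$ zero block with $W_i = \operatorname{span}(f_1)$) being immediate. Combined with the first step this proves both assertions. I expect the only genuinely delicate point to be this last passage: the core is the sum of kernels over the \emph{regular} values, not over all $\lambda$, and the Vandermonde argument is precisely what guarantees that restricting to the cofinite regular set still captures the full span $\operatorname{span}(f_1,\dots,f_{k_i})$.
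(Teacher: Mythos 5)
Your proof is correct. The paper states this proposition without proof (it is presented as a ``simple description'' following the Jordan--Kronecker theorem), and your argument is exactly the natural verification one would supply: block-diagonality reduces everything to a single block, the Jordan blocks contribute nothing at regular $\lambda$, the kernel of a Kronecker block is the explicit polynomial curve $v(\lambda)=\sum_b(-\lambda)^{k_i-b}f_b$, and the Vandermonde step correctly handles the one genuinely non-obvious point, namely that summing over only the (cofinite set of) regular values already recovers all of $\operatorname{span}(f_1,\dots,f_{k_i})$.
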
 

Simply speaking, the core subspace $K$ is spanned by the subspaces corresponding to the down-right blocks of the Kronecker blocks. We would need the following simple statement.

\begin{proposition} \label{Prop:CommutLinear} Let $\mu_1, \dots, \mu_D \in \mathbb{C} \cup \left\{\infty\right\}, D \in \mathbb{N}$ be any distinct values,  $v_i \in \operatorname{Ker} (A + \mu_i B), i=1, \dots, D$ be any vectors. Consider the subspace \[ U = K + \operatorname{span}\left\{ v_1, \dots, v_D \right\}\] where $K$ is the core subspace.

\begin{enumerate}
    \item $U$ is isotropic w.r.t. all forms $A + \lambda B, \lambda \in \bar{\mathbb{C}}$.
    \item  If $v_j \not \in K$, then $-\mu_j$ is an eigenvalue (i.e $\mu_j$ is not a regular value) and the vector $v_j$ belongs to one of the corresponding Jordan blocks:  \[ v_j \in V_{J_{-\mu_j, 2m_j}}. \] 
    \item Therefore, \[ \dim U = \dim K + \left| \left\{j \,\, \bigr| \,\, v_j \not \in K \right\} \right|. \] 
\end{enumerate} 
\end{proposition}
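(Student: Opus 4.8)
The plan is to reduce everything to two elementary facts: an orthogonality property of kernels taken at different values of the parameter, and the block-wise description of $\operatorname{Ker} A_\lambda$ read off from the canonical forms \eqref{Eq:JordBlockL} and \eqref{Eq:KronBlock}. The orthogonality lemma I would isolate first: if $v \in \operatorname{Ker}(A + \mu B)$ and $w \in \operatorname{Ker}(A + \nu B)$ with $\mu \neq \nu$ in $\bar{\mathbb{C}}$, then $A_\lambda(v,w) = 0$ for every $\lambda$. For finite $\mu,\nu$ this is immediate: from $(A + \mu B)(v,\cdot) = 0$ and skew-symmetry one gets $A(v,w) = -\mu B(v,w)$, while $(A + \nu B)(w,\cdot) = 0$ gives $A(v,w) = -\nu B(v,w)$; subtracting yields $(\nu-\mu)B(v,w) = 0$, so $B(v,w) = 0$ and hence $A(v,w) = 0$. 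The value $\lambda = \infty$ is handled by the same computation with $B$ in place of $A_\lambda$. The second ingredient is obtained by inspecting each block: on a Jordan block $V_{J_{\lambda_i,2n_i}}$ the form $A + \lambda B$ is nondegenerate unless $\lambda = -\lambda_i$, and on a Kronecker block $V_{K_i}$ every kernel vector lies in the core $\operatorname{span}(f_1,\dots,f_{k_i})$, in agreement with Proposition~\ref{Cor:CoreMantle}.

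For claim (1) I would check isotropy on a spanning set, namely $K$ together with $v_1,\dots,v_D$. That $K$ is isotropic w.r.t. all $A_\lambda$ follows from Proposition~\ref{Cor:CoreMantle}: $K = \bigoplus_i (K \cap V_{K_i})$ with $K \cap V_{K_i} = \operatorname{span}(f_1,\dots,f_{k_i})$, and by the block-diagonal form \eqref{Eq:KronBlock} the restrictions of both $A$ and $B$ to the span of the $f$'s vanish, while distinct blocks are mutually orthogonal. The pairings $A_\lambda(v_i,v_j)$ with $i \neq j$ vanish by the orthogonality lemma since $\mu_i \neq \mu_j$, and $A_\lambda(v_i,v_i) = 0$ by skew-symmetry. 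Finally, for $k \in K$ I would split $k$ into its components in $\operatorname{Ker} A_\nu$ over regular $\nu$: if $\nu \neq \mu_i$ the lemma gives orthogonality to $v_i$, whereas if $\mu_i$ is itself regular then $v_i \in \operatorname{Ker} A_{\mu_i} \subseteq K$ and the pairing vanishes by isotropy of $K$. Hence $U$ is isotropic.

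For claims (2) and (3) the key step is to decompose each $v_j \in \operatorname{Ker} A_{\mu_j}$ along the JK decomposition \eqref{Eq:JKDecomp}. Since the forms are block-diagonal, each component of $v_j$ lies in the kernel of the corresponding restricted pencil; by the block-wise description its Kronecker components lie in the core and its only possible Jordan components sit in blocks with eigenvalue $-\mu_j$. Thus $v_j = v_j^K + v_j^J$ with $v_j^K \in K$ and $v_j^J \in \bigoplus_{\lambda_i = -\mu_j} V_{J_{\lambda_i,2m_i}}$, and $v_j \notin K$ is equivalent to $v_j^J \neq 0$; in particular $v_j \notin K$ forces $-\mu_j$ to be an eigenvalue, which is claim (2). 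For claim (3) I would show that the images of those $v_j$ with $v_j \notin K$ are linearly independent in $V/K$. Suppose $\sum_j c_j v_j \in K$; subtracting the core parts gives $\sum_j c_j v_j^J \in K$, and since the Jordan part of $V$ meets $K$ trivially we get $\sum_j c_j v_j^J = 0$. Because the $\mu_j$ are distinct, the vectors $v_j^J$ lie in the pairwise-disjoint eigenvalue summands $\bigoplus_{\lambda_i = -\mu_j} V_{J_{\lambda_i,2m_i}}$ of the Jordan part, so each $c_j v_j^J = 0$; for $v_j \notin K$ this forces $c_j = 0$. Therefore $\dim U = \dim K + \left|\{\, j : v_j \notin K \,\}\right|$.

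The main obstacle is really bookkeeping rather than depth: the whole argument rests on the precise block-wise kernel computation, so I would take care to verify from \eqref{Eq:JordBlockL} that $A + \lambda B$ restricted to $V_{J_{\lambda_i,2n_i}}$ degenerates exactly at $\lambda = -\lambda_i$ (minding the sign convention of the characteristic polynomial) and from \eqref{Eq:KronBlock} that the Kronecker kernels are exhausted by the core. One wrinkle worth flagging: as stated, claim (2) asserts $v_j \in V_{J_{-\mu_j,2m_j}}$, i.e. membership in a single Jordan block, whereas a general kernel vector at a non-regular value may also carry a core component and may spread over several Jordan blocks of the same eigenvalue; the statement should be read as locating the non-core part of $v_j$ in the Jordan blocks with eigenvalue $-\mu_j$, which is all that claim (3) actually uses.
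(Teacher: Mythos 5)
Your proof is correct; the paper states this proposition without proof (it is offered as a ``simple statement''), and your argument --- the cross-kernel orthogonality computation $(\nu-\mu)B(v,w)=0$ combined with the block-by-block kernel analysis in the Jordan--Kronecker decomposition --- is precisely the standard reasoning the author is implicitly relying on, and it goes through for the $\mu_i=\infty$ case by the same computation with the roles of $A$ and $B$ exchanged. Your closing remark is also well taken: as literally written, claim (2) is a slight abuse of notation, since a kernel vector at a non-regular value may carry a core component and may spread over several Jordan blocks with eigenvalue $-\mu_j$; only its non-core part is confined to those blocks, which is exactly what claim (3) uses.
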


\subsection{Poisson pencils}

Let $M$ be a real $\mathbb{C}^{\infty}$-smooth or a complex analytic manifold. 

\begin{itemize}

\item Two Poisson brackets  $\mathcal{A}$ and $\mathcal{B}$ on $M$ are  \textbf{compatible} if any their linear combination with constant coefficients $\alpha \mathcal{A} + \beta \mathcal{B}$ is also a Poisson bracket (in practice, it suffices to check that $\mathcal{A} + \mathcal{B}$ is a Poisson bracket). 

\item We call a pencil of compatible Poisson brackets $\mathcal{P} = \left\{ \mathcal{A} + \lambda \mathcal{B} \right\} $, where $\lambda \in \bar{\mathbb{C}} = \mathbb{C} \cup \left\{ \infty \right\}$, a \textbf{Poisson pencil}. We use the following notations:  \[\mathcal{A}_{\lambda} = \mathcal{A} + \lambda \mathcal{B}, \qquad \mathcal{A}_{\infty} = \mathcal{B}.\] 

\item Functions $f$ and $g$ \textbf{commute} (or are in \textbf{involution}) w.r.t. to a Poisson bracket $\mathcal{A}_{\lambda}$ if $\left\{ f, g\right\}_{\lambda} := \mathcal{A}_{\lambda}(df, dg) = 0$.

\item A \textbf{Casimir function} of a Poisson bracket $\mathcal{A}$ is a function $f$ commutes with all other functions w.r.t. this bracket. The set of Casimir function is denoted by $\mathcal{C} \left( \mathcal{A}\right)$.

\end{itemize}

\subsubsection{Core distribution}

In this section we discuss distributions on $M$ equipped with a Poisson pencil  $\mathcal{P}$. For the terminology and more details about singular distributions and their integrability see \cite{DufourZung05}. 

\begin{itemize}

\item  A \textbf{singular distribution} on a manifold $M$ is the assignment to each point $x$ of M a vector subspace $D_x$ of the tangent space $T_xM$. The dimension of $D_x$ may depend on $x$.

\item A singular distribution $D$ on a smooth manifold is called \textbf{smooth} if for any point $x$ of $M$ and any vector $X_0 \in D_x$, there is a smooth vector field $X$ defined in
a neighborhood $U_x$ of $x$ which is tangent to the distribution $D$. 

\item We say that a smooth singular distribution $D$ on a smooth manifold $M$ is 

\begin{itemize}
\item a \textbf{regular}\footnote{In other words, a regular distribution is a a subbundle of $TM$.} distribution if $\dim D_x$ does not depend on $x$;

\item an \textbf{integrable} distribution if every point $x \in M$ is contained in an a connected immersed submanifold $N_x \subset M$ such that $T_x N_x = D_x$.

\end{itemize}

\item For any distribution $\Delta \subset TM$ we can also consider its dual distribution $\Delta^0 \subset T^*M$.  A distribution $\Delta^0$ in the cotangent bundle $T^*M$ will be called \textbf{integrable} (\textbf{involutive}, etc) if $\Delta$ is integrable (respectively, involutive, etc).

\item Let $\mathcal{P}$ be a Poisson pencil on $M$. A distribution $\Delta^0 \subset T^*M$ is \textbf{bi-isotropic} if each subspace $\Delta^0_x$ is a bi-isotropic subspace of $\left(T^*_x M, \mathcal{P}(x) \right)$

\end{itemize}

\begin{definition} \label{D:CoreMantleDist} Let $\mathcal{P} = \left\{ \mathcal{A}_{\lambda} = \mathcal{A} + \lambda \mathcal{B}\right\}$ be a Poisson pencil on $M$. There are two natural singular distributions in $T^*M$: 

\begin{enumerate}

\item The core subspace in each cotangent space $T^*_x M$ defines a  the \textbf{core distribution} $\mathcal{K}$ in $T^*M$. In other words, at each point $x \in M$
 \begin{equation} \label{Eq:CoreDist} \mathcal{K}_x =  \bigoplus_{\lambda - \text{regular for $\mathcal{P}(x)$}} \operatorname{Ker}\mathcal{A}_\lambda (x),  \end{equation} 

\item Similarly, the mantle subspace in each cotangent space $T^*_x M$ defines  the \textbf{mantle distribution} $\mathcal{M}$.
 
\end{enumerate}

 \end{definition}

We use the following simple statement about the core distribution (see e.g.\footnote{The first statement that in \cite[Proposition 5.3]{Kozlov23JKRealization} that $\mathcal{K}$ is an integrable singular smooth distribution is wrong. An intersection of integrable regular distributions a priori may not be smooth (although it will be integrable if it is smooth). Since we are not interested in the singularities and only consider the case  only interested in the case $\operatorname{rk} \mathcal{P}(x) = \operatorname{const}$ and $\operatorname{deg} p_{\mathcal{P} (x)} = \operatorname{const}$, it doesn't change anything.} \cite[Proposition 5.3]{Kozlov23JKRealization}).

\begin{proposition} \label{Prop:CoreDistDim}  Let $\mathcal{P} = \left\{\mathcal{A} + \lambda \mathcal{B} \right\}$ be a Poisson pencil on a manifold $M$ and $p_{\mathcal{P}(x)}$ be its characteristic polynomial at $ x\in M$.

\begin{enumerate} 

\item For any point $x \in M$ we have \[\operatorname{dim} \mathcal{K}_x = \operatorname{dim} M - \frac{1}{2} \operatorname{rk} \mathcal{P}(x) - \operatorname{deg} p_{\mathcal{P}(x)}. \]

\item If $\operatorname{rk} \mathcal{P}(x) = \operatorname{const}$ and $\operatorname{deg} p_{\mathcal{P} (x)} = \operatorname{const}$ in a neighbourhood $O_{x_0} \subset (M, \mathcal{P})$, then $\mathcal{K}$ is an integrable regular distribution in $O_{x_0}$.
\end{enumerate}

\end{proposition}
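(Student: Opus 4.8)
For the first statement, which is purely pointwise, the plan is to fix $x \in M$ and apply the Jordan--Kronecker Theorem~\ref{T:Jordan-Kronecker_theorem} to the linear pencil $\mathcal{P}(x)$ on $T^*_x M$, obtaining a decomposition \eqref{Eq:JKDecomp} with Jordan blocks $V_{J_{\lambda_j,2n_j}}$ of dimension $2n_j$ and Kronecker blocks $V_{K_i}$ of dimension $2k_i-1$. Everything then reduces to four dimension counts that I would assemble: $\dim M = \sum_j 2n_j + \sum_i(2k_i-1)$; the pencil rank $\operatorname{rk}\mathcal{P}(x) = \sum_j 2n_j + \sum_i(2k_i-2)$, since at a regular value a Jordan block is nondegenerate while a Kronecker block of size $2k_i-1$ always carries a one-dimensional kernel; $\operatorname{deg} p_{\mathcal{P}(x)} = \sum_j n_j$ from the product formula $p_{\mathcal{P}(x)} = \prod_j(\lambda-\lambda_j)^{n_j}$; and $\dim\mathcal{K}_x = \sum_i k_i$ by Proposition~\ref{Cor:CoreMantle}. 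Substituting these into the right-hand side, the Jordan contributions cancel and one is left precisely with $\sum_i k_i = \dim\mathcal{K}_x$. This is a routine computation.

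For the second statement I would fix $x_0$ and work in the neighborhood $O_{x_0}$ where $\operatorname{rk}\mathcal{P}$ and $\operatorname{deg} p_{\mathcal{P}}$ are constant, so that by the first part $\dim\mathcal{K}_x \equiv d$ is constant there; it then remains to establish smoothness and integrability. The first step is to replace the a priori infinite sum in \eqref{Eq:CoreDist} by a finite one: since $\mathcal{K}_{x_0}$ is finite-dimensional, there are finitely many values $\lambda_1,\dots,\lambda_m$ regular for $\mathcal{P}(x_0)$ with $\mathcal{K}_{x_0} = \sum_{i=1}^m \operatorname{Ker}\mathcal{A}_{\lambda_i}(x_0)$. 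Next I would verify that each $\lambda_i$ stays regular throughout $O_{x_0}$: the rank $\operatorname{rk}\mathcal{A}_{\lambda_i}(x)$ is lower semicontinuous and bounded above by $\operatorname{rk}\mathcal{P}(x) = \operatorname{rk}\mathcal{P}(x_0) = \operatorname{rk}\mathcal{A}_{\lambda_i}(x_0)$, hence constant. Consequently each $\operatorname{Ker}\mathcal{A}_{\lambda_i}$ is a regular distribution of constant rank $\dim M - \operatorname{rk}\mathcal{P}$, namely the conormal distribution to the symplectic foliation of the constant-rank Poisson structure $\mathcal{A}_{\lambda_i}$, and is therefore locally spanned by differentials of Casimir functions of $\mathcal{A}_{\lambda_i}$.

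Now I would consider the smooth distribution $\mathcal{S} = \sum_{i=1}^m \operatorname{Ker}\mathcal{A}_{\lambda_i}$, spanned by the union of these Casimir differentials. On one hand $\mathcal{S}_x \subseteq \mathcal{K}_x$ for every $x \in O_{x_0}$, since each $\lambda_i$ is regular and thus $\operatorname{Ker}\mathcal{A}_{\lambda_i}(x)$ is one of the summands in \eqref{Eq:CoreDist}, giving $\dim\mathcal{S}_x \leq d$. On the other hand $\dim\mathcal{S}_x$, being the rank of a matrix of smooth $1$-forms, is lower semicontinuous and equals $d$ at $x_0$, so $\dim\mathcal{S}_x \geq d$ near $x_0$. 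The two bounds force $\dim\mathcal{S}_x = d = \dim\mathcal{K}_x$, hence $\mathcal{S}_x = \mathcal{K}_x$. This identifies $\mathcal{K}$ on $O_{x_0}$ with the smooth constant-dimensional distribution $\mathcal{S}$, proving it regular. Finally, since $\mathcal{K}$ is spanned by exact $1$-forms, its annihilator $\mathcal{K}^0 \subset TM$ is the common tangent distribution to the level sets of $d$ independent such functions, so by the Frobenius theorem $\mathcal{K}^0$, and therefore $\mathcal{K}$, is integrable.

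The delicate point---exactly the one flagged in the footnote---is smoothness: a sum of regular integrable distributions need not be smooth nor of locally constant dimension, so integrability of $\mathcal{K}$ cannot be claimed in general. The whole argument hinges on using the constancy of $\operatorname{rk}\mathcal{P}$ and $\operatorname{deg} p_{\mathcal{P}}$, through the first part, to pin $\dim\mathcal{K}_x$ to the constant $d$, and then squeezing $\dim\mathcal{S}_x$ between this upper bound and the lower-semicontinuity bound $d$. This squeeze is what rules out the dimension jumps that would otherwise destroy smoothness, and I expect it to be the crux of the proof.
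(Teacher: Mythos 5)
Your proof is correct. Note that the paper itself does not prove Proposition~\ref{Prop:CoreDistDim}: it delegates it to \cite[Proposition 5.3]{Kozlov23JKRealization}, adding a footnote that the general ``singular smooth'' claim there is wrong and that only the constant-rank, constant-degree case is needed. Your argument supplies exactly the missing content. Part 1 is the standard block-by-block dimension count (with the paper's standing convention that $\mathcal{B}$ is regular, so no Jordan blocks with eigenvalue $\infty$ contribute outside $\deg p_{\mathcal{P}(x)}$), and your Part 2 is the right way to handle the delicate point: reduce the sum in \eqref{Eq:CoreDist} to finitely many $\lambda_i$ that remain regular near $x_0$ by semicontinuity of rank, note that $\mathcal{S}=\sum_i \operatorname{Ker}\mathcal{A}_{\lambda_i}$ is a smooth distribution sitting inside $\mathcal{K}$, and squeeze $\dim\mathcal{S}_x$ between its lower-semicontinuous bound $d$ and the upper bound $\dim\mathcal{K}_x=d$ supplied by Part 1. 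This squeeze is precisely what rules out the dimension jumps alluded to in the footnote, and the identification $\mathcal{K}=\mathcal{S}$ as a span of differentials of Casimir functions gives regularity and integrability (the level sets of $d$ independent such functions are the integral manifolds, so Frobenius is not even strictly needed). The only cosmetic remark is that Proposition~\ref{Prop:CasimirAndCore}(2) would let you take the finite spanning set with a uniform bound $D$ on the number of regular values, but your finite-dimensionality argument is equally valid.
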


In practice we can generate the core distribution by taking a sufficient number of (local) Casimir functions. The next statement easily follows from the Jordan--Kronecker theorem.

\begin{proposition} \label{Prop:CasimirAndCore} Let $\mathcal{P} = \left\{\mathcal{A} + \lambda \mathcal{B} \right\}$ be a Poisson pencil on a manifold $M$ and $p_{\mathcal{P}(x)}$ be its characteristic polynomial at $ x\in M$.

\begin{enumerate}

\item For any point $x \in M$, any bracket $\mathcal{A}_{\lambda}, \lambda \in \bar{\mathbb{C}}$ such that $\mathcal{A}_{\lambda}(x)$ is regular in $\mathcal{P}(x)$ and any Casimir function $f \in \mathcal{C} \left(\mathcal{A}_{\lambda} \right)$ we have

\[df (x) \in \mathcal{K}_x. \]

\item Consider a Jordan--Kronecker decomposition of $\mathcal{P}(x)$ for a point $x \in M$. Assume that the biggest Kronecker block has size $(2D - 1) \times (2D -1), D > 0$. Then for any values $\lambda_1, \dots, \lambda_{D} \in \bar{\mathbb{C}}$ that are regular for the pencil $\mathcal{P}(x)$ we have \[ \mathcal{K}_x = \bigoplus_{j=1}^{D} \operatorname{Ker} \mathcal{A}_{\lambda_j}(x).\] 

\end{enumerate}

\end{proposition}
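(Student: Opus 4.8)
The plan is to treat both parts as pointwise statements in linear algebra about the single pencil of skew-symmetric forms $\mathcal{P}(x)$ on the cotangent space $T^*_x M$, so that everything reduces to the Jordan--Kronecker theorem together with the explicit description of the core subspace in Proposition~\ref{Cor:CoreMantle}. The global, Poisson-theoretic content enters only through the elementary observation that at a fixed point $x$ the differentials $dg(x)$ realise every covector in $T^*_x M$.

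For the first part I would simply unwind the definition of a Casimir function. If $f \in \mathcal{C}(\mathcal{A}_\lambda)$, then $\{f,g\}_\lambda = \mathcal{A}_\lambda(df, dg) = 0$ for every function $g$. Evaluating at $x$ and letting $dg(x)$ range over all of $T^*_x M$, I conclude that $df(x)$ is $\mathcal{A}_\lambda(x)$-orthogonal to the entire cotangent space, i.e. $df(x) \in \operatorname{Ker}\mathcal{A}_\lambda(x)$. Since $\lambda$ is regular at $x$, this kernel is one of the summands in the definition~\eqref{Eq:CoreDist} of $\mathcal{K}_x$, hence $df(x) \in \mathcal{K}_x$, which is the assertion.

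The second part is the substantive one, and I would organise it around the Jordan--Kronecker decomposition~\eqref{Eq:JKDecomp} of $\mathcal{P}(x)$. First, for regular $\lambda$ the form $\mathcal{A}_\lambda(x)$ is nondegenerate on every Jordan summand (a value is non-regular exactly when $-\lambda$ is an eigenvalue, cf. Proposition~\ref{Prop:CommutLinear}), so $\operatorname{Ker}\mathcal{A}_\lambda(x) \subseteq K = \mathcal{K}_x$ and it decomposes as the sum of its one-dimensional intersections with the Kronecker blocks. It therefore suffices to work inside a single Kronecker block $V_{K_i}$ of size $(2k_i-1)\times(2k_i-1)$. Writing a standard basis $e_1,\dots,e_{k_i-1},f_1,\dots,f_{k_i}$ as in~\eqref{Eq:KronBlock} and solving $\mathcal{A}_\lambda(v,\cdot)=0$ directly, I expect to find that the $e$-components vanish and that the kernel is spanned by the single vector
\begin{equation*}
v_i(\lambda) = \sum_{c=1}^{k_i} (-\lambda)^{k_i-c} f_c,
\end{equation*}
which lies in the core $K\cap V_{K_i}=\operatorname{span}(f_1,\dots,f_{k_i})$ of Proposition~\ref{Cor:CoreMantle}. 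The dependence on $\lambda$ is polynomial of degree $k_i-1$, so the coefficient matrix of $v_i(\lambda_1),\dots,v_i(\lambda_D)$ in the basis $f_1,\dots,f_{k_i}$ is Vandermonde in the distinct nodes $-\lambda_1,\dots,-\lambda_D$. Because $k_i \le D$, this matrix has rank $k_i$, so $v_i(\lambda_1),\dots,v_i(\lambda_D)$ already span the whole core of $V_{K_i}$. Summing over the Kronecker blocks yields $\sum_{j=1}^D \operatorname{Ker}\mathcal{A}_{\lambda_j}(x) = \bigoplus_i (K\cap V_{K_i}) = \mathcal{K}_x$, the reverse inclusion being the containment noted above.

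I expect the only genuine obstacle to be the explicit kernel computation in the Kronecker block combined with the Vandermonde rank argument; the rest is bookkeeping across blocks. The count of exactly $D$ values is sharp, since the largest block has core of dimension $D$ and kernel vector of polynomial degree $D-1$, so $D$ distinct nodes are both necessary and sufficient to span it. One point deserving a word of care is the direct-sum symbol: each $\operatorname{Ker}\mathcal{A}_{\lambda_j}(x)$ has dimension equal to the number of Kronecker blocks, so the sum is genuinely direct precisely when all Kronecker blocks have the maximal size $D$; otherwise the displayed $\bigoplus$ should be read as an ordinary sum of subspaces, while the equality with $\mathcal{K}_x$ holds in every case.
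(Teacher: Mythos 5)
Your proof is correct and follows exactly the route the paper intends when it says the statement ``easily follows from the Jordan--Kronecker theorem'': reduce to the normal form, observe that for regular $\lambda$ the kernel sits entirely in the Kronecker blocks, compute the one-dimensional kernel $v_i(\lambda)=\sum_c(-\lambda)^{k_i-c}f_c$ in each block, and use the Vandermonde rank argument with $D$ distinct regular values (which also explains why $D$ is sharp). Your closing remark is also apt: the displayed $\bigoplus$ is an honest direct sum only when all Kronecker blocks have the maximal size, and otherwise must be read as an ordinary sum of subspaces, with the equality $\mathcal{K}_x=\sum_j\operatorname{Ker}\mathcal{A}_{\lambda_j}(x)$ holding in all cases.
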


By the well-known Darboux--Weinstein Theorem, if a Poisson bracket $\mathcal{A}$ has constant rank $\operatorname{rk} \mathcal{A} = \operatorname{const}$, then  $\operatorname{Ker}\mathcal{A}(x)$ is spanned by differentials of local Casimir functions. Thus, the core distribution $\mathcal{K}$ is also locally spanned by differentials of local Casimir functions at a generic point. Formally, we have the following statement.

\begin{corollary} \label{Cor:LocSpan} Let $\mathcal{P}$ is a Poisson pencil with  on $M$. If $\operatorname{rk} \mathcal{P} = \operatorname{const}$ on $M$, then in a sufficiently small neighborhood $U$ of any point $x_0$ there exist Casimir functions $f_{j, 1}, \dots, f_{j, m_j} \in \mathcal{C}\left(\mathcal{A}_{\mu_j}\right), j = 1, \dots, D$ such that 

\begin{enumerate} 

\item  $\mathcal{A}_{\mu_j}(x), j=1,\dots, D$ are regular in the linear pencil $\mathcal{P}(x)$ for any $x \in U$;  

\item the core distribution $\mathcal{K}$ is locally spanned by  the differentials of  Casimir functions:

\[ \mathcal{K}_x =  \operatorname{span} \left\{  df_{1, 1}(x), \dots, df_{D, m_D}(x) \right\}, \qquad \forall x \in U. \]

\end{enumerate}

\end{corollary}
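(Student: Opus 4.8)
The plan is to manufacture the required functions by applying the Darboux–Weinstein theorem to finitely many regular brackets $\mathcal{A}_{\mu_j}$ of the pencil, once I have arranged that a fixed collection of values $\mu_j$ stays regular across a small neighborhood, and then to recover the core distribution from these kernels via Proposition~\ref{Prop:CasimirAndCore}.

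First I would fix $x_0$ and choose the values $\mu_1, \dots, \mu_D$. At $x_0$ only finitely many $\lambda \in \bar{\mathbb{C}}$ are non-regular (the eigenvalues, i.e.\ the roots of $p_{\mathcal{P}(x_0)}$ together with $\infty$), and the ground field is infinite, so I can select $D = \lceil (\dim M + 1)/2 \rceil$ distinct values $\mu_1, \dots, \mu_D$ that are regular for $\mathcal{P}(x_0)$, that is $\operatorname{rk}\mathcal{A}_{\mu_j}(x_0) = \operatorname{rk}\mathcal{P} = r$. To obtain condition~(1) I would use that the rank of a family of forms is lower semicontinuous in $x$, while $\operatorname{rk}\mathcal{A}_{\mu_j}(x) \leq \operatorname{rk}\mathcal{P}(x) = r$ always holds because $\operatorname{rk}\mathcal{P}$ is constant and maximal by hypothesis; hence $\operatorname{rk}\mathcal{A}_{\mu_j}(x) = r$ on some neighborhood $U$ of $x_0$, so each $\mathcal{A}_{\mu_j}(x)$ is regular for every $x \in U$.

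Next, on $U$ each $\mathcal{A}_{\mu_j}$ is a Poisson bracket of constant rank $r$, so the Darboux–Weinstein theorem provides (after shrinking $U$) local Casimir functions $f_{j,1}, \dots, f_{j,m_j} \in \mathcal{C}(\mathcal{A}_{\mu_j})$ with $m_j = \dim M - r$ whose differentials span $\operatorname{Ker}\mathcal{A}_{\mu_j}(x)$ at every $x \in U$. It then remains to verify $\mathcal{K}_x = \sum_{j=1}^D \operatorname{Ker}\mathcal{A}_{\mu_j}(x)$ for all $x \in U$. The inclusion $\sum_{j}\operatorname{Ker}\mathcal{A}_{\mu_j}(x) \subseteq \mathcal{K}_x$ is immediate from the definition~\eqref{Eq:CoreDist} of the core distribution, since every $\mu_j$ is regular at $x$. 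For the reverse inclusion I would invoke Proposition~\ref{Prop:CasimirAndCore}(2): if $2D(x)-1$ is the size of the largest Kronecker block of $\mathcal{P}(x)$, then any $D(x)$ regular values already satisfy $\mathcal{K}_x = \bigoplus_{j} \operatorname{Ker}\mathcal{A}_{\mu_j}(x)$; choosing $D(x)$ of our $\mu_j$ (all of which are regular at $x$) gives $\mathcal{K}_x \subseteq \sum_{j=1}^D\operatorname{Ker}\mathcal{A}_{\mu_j}(x)$. Combining the two inclusions with the Darboux–Weinstein spanning yields exactly $\mathcal{K}_x = \operatorname{span}\{df_{1,1}(x), \dots, df_{D,m_D}(x)\}$, which is condition~(2).

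The point requiring care — and the main obstacle — is that the corollary assumes only $\operatorname{rk}\mathcal{P} = \operatorname{const}$ and not that the Jordan–Kronecker type is constant, so the size $2D(x)-1$ of the largest Kronecker block may vary with $x$, and the argument above needs $D \geq D(x)$ for every $x \in U$. I would resolve this with the crude but uniform bound $2D(x) - 1 \leq \dim M$, i.e.\ $D(x) \leq (\dim M + 1)/2 \leq D$, valid at every point simply because a Kronecker block is a subspace of $T^*_x M$; this is precisely why $D$ was chosen as $\lceil (\dim M + 1)/2\rceil$ from the outset. If a smaller $D$ is preferred, one can instead appeal to the upper semicontinuity of the maximal Kronecker block size to take $D = D(x_0)$ on a possibly smaller neighborhood, but the uniform bound sidesteps this semicontinuity discussion entirely.
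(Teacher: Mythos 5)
Your proposal is correct and follows exactly the route the paper intends: the corollary is stated there without a formal proof, as an immediate consequence of the Darboux--Weinstein theorem (giving Casimir functions that span $\operatorname{Ker}\mathcal{A}_{\mu_j}$) combined with Proposition~\ref{Prop:CasimirAndCore}. Your additional care about keeping the $\mu_j$ regular on a whole neighborhood via lower semicontinuity of rank, and the uniform bound $D(x)\leq(\dim M+1)/2$ to make one choice of $D$ work at every point of $U$, fills in precisely the details the paper leaves implicit.
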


\subsection{Local coordinates for core and mantle}

We need the results of this section only in the real case, when a characteristic polynomial $p_{\mathcal{P}}$ of a Poisson pencil has  complex conjugate eigenvalues $\alpha \pm i \beta$. The following two useful Theorems~\ref{T:BiPoissRedCoreMantle} and \ref{T:TrivKronFact} often allows us reduce some problems about Poisson pencils $\mathcal{P}$ to the case when 

\begin{itemize}

\item there is only one eigenvalue $\lambda_0$ (or a pair of complex conjugate eigenvalues $\alpha_0 \pm i \beta_0$ in the real case),

\item and all Kronecker blocks of $\mathcal{P}$ are trivial $1\times 1$ blocks.

\end{itemize} The next theorem is proved in \cite[Theorem 5.9]{Kozlov23JKRealization}.

\begin{theorem}  \label{T:BiPoissRedCoreMantle}  Let $\mathcal{P} = \left\{ \mathcal{A}_{\lambda} = \mathcal{A} + \lambda \mathcal{B}\right\}$ be a Poisson pencil on $M$ and $p_{\mathcal{P}(x)}$ be its characteristic polynomial at $x\in M$. Assume that \[\deg p_{\mathcal{P}(x)} = \operatorname{const}, \qquad \operatorname{rk} \mathcal{P}(x) = \operatorname{const}\] on $M$.  Denote \[ n_J = \dim p_{\mathcal{P}(x_0)}, \qquad m  = \operatorname{rk} \mathcal{P}(x_0) - 2n_J, \qquad r = \operatorname{corank} \mathcal{P}.\]  
Then for any point $x \in M$ there exist local coordinates $x_1,\dots, x_{m}, s_1, \dots, s_{2n_J}, y_1, \dots, y_{m+r}$ such that the core and mantle distribution are \begin{equation} \label{Eq:CoreMantleDistLoc} \mathcal{K} = \operatorname{span}\left\{dy_1, \dots, dy_{m+r} \right\}, \qquad \mathcal{M} = \operatorname{span}\left\{ds_1, \dots, ds_{2n_J}, dy_1, \dots, dy_{m+r} \right\}\end{equation} and the pencil has the form \[\mathcal{A}_{\lambda} = \sum_{i=1}^{m} \frac{\partial}{\partial x_i} \wedge v_{\lambda, i} + \sum_{1 \leq i < j \leq 2 n_J} c_{\lambda, ij}(s, y)  \frac{\partial}{\partial s_i} \wedge \frac{\partial}{\partial s_j} \] for some vector fields $v_{\lambda, i} = v_{\lambda, i}(x, s, y)$ and some functions $c_{\lambda, ij}(s, y)$.  \end{theorem}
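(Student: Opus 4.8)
The plan is to build the three blocks of coordinates in the order $y\to s\to x$: first realise the core codistribution $\mathcal{K}$ by closed forms, then the mantle $\mathcal{M}$, complete to a chart, and finally read off the shape of the bivector from the pointwise Jordan--Kronecker structure together with one bi-Hamiltonian computation. Throughout I work in a neighbourhood of a fixed point and use the constancy hypotheses without further comment.

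By Proposition~\ref{Prop:CoreDistDim} the core distribution $\mathcal{K}$ is integrable and regular, and by Corollary~\ref{Cor:LocSpan} it is locally spanned by the differentials of Casimir functions $y_1,\dots,y_{m+r}$, where each $y_a$ is a Casimir of a regular bracket $\mathcal{A}_{\mu_a}$. The single most important preparation is to insist that all the chosen regular values satisfy $\mu_a\neq 0$; this is possible since the regular values form an open dense subset of $\bar{\mathbb C}$. The Casimir identity $\{y_a,\cdot\}_{\mu_a}=0$ then reads $\{y_a,\cdot\}_{\mathcal A}=-\mu_a\{y_a,\cdot\}_{\mathcal B}$, i.e. the Hamiltonian fields satisfy $X^{\mathcal A}_{y_a}=-\mu_a X^{\mathcal B}_{y_a}$.

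Next I would produce the mantle coordinates. Consider the tangent distribution $\mathcal{D}=\mathcal{B}(\mathcal{K})$ spanned by the fields $X^{\mathcal B}_{y_a}=\mathcal{B}(dy_a,\cdot)$. By the proportionality above $\mathcal{D}$ already contains every $X^{\mathcal A}_{y_a}$, so $\mathcal{D}=\sum_{\mu}\mathcal{A}_\mu(\mathcal{K})$; the Jordan--Kronecker theorem (with $\mathcal{B}$ regular) identifies $\mathcal{D}$ with the non-core Kronecker directions, a subbundle of constant rank $m$, and identifies its annihilator $\mathcal{D}^0\subset T^*M$ with the mantle $\mathcal{M}=\mathcal{K}^{\perp_{\mathcal B}}$. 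Since $\{y_a,y_b\}_{\mathcal B}=0$ (the core is isotropic, Proposition~\ref{Prop:CommutLinear}), the spanning fields $X^{\mathcal B}_{y_a}$ pairwise commute, so $\mathcal{D}$ is involutive and hence integrable. The functions $y_a$ are first integrals of $\mathcal{D}$; completing them by further first integrals $s_1,\dots,s_{2n_J}$ gives $\mathcal{M}=\operatorname{span}\{ds_1,\dots,ds_{2n_J},dy_1,\dots,dy_{m+r}\}$, and coordinates $x_1,\dots,x_m$ along the leaves of $\mathcal{D}=\mathcal{M}^0$ complete the chart. Because the Jordan--Kronecker blocks are mutually orthogonal for every $\mathcal{A}_\lambda$ and the core is isotropic, one gets $\{y_a,y_b\}_\lambda=0$ and $\{s_i,y_a\}_\lambda=0$ for all $\lambda$; hence $\mathcal{A}_\lambda$ has no $\partial_y\wedge\partial_y$ and no $\partial_s\wedge\partial_y$ terms, and collecting the remaining terms gives the asserted form with $v_{\lambda,i}=\sum_p\{x_i,z_p\}_\lambda\,\partial_{z_p}$ (here $z=(x,s,y)$) and $c_{\lambda,ij}=\{s_i,s_j\}_\lambda$.

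The heart of the matter, and the step I expect to be hardest, is the last claim: that $c_{\lambda,ij}=\{s_i,s_j\}_\lambda$ is a function of $(s,y)$ alone, i.e. is constant along the leaves of $\mathcal{M}^0=\mathcal{D}$. Since these leaves are the $x$-directions and $\mathcal{D}$ is spanned by the $X^{\mathcal B}_{y_a}$, it suffices to show $X^{\mathcal B}_{y_a}\bigl(\{s_i,s_j\}_\lambda\bigr)=\{y_a,\{s_i,s_j\}_\lambda\}_{\mathcal B}=0$. Write $\{s_i,s_j\}_\lambda=P+\lambda Q$ with $P=\{s_i,s_j\}_{\mathcal A}$, $Q=\{s_i,s_j\}_{\mathcal B}$. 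Applying the Jacobi identity of the Poisson bracket $\mathcal{A}_\nu$ to $(y_a,s_i,s_j)$ and using $\{y_a,s_i\}_\nu=\{y_a,s_j\}_\nu=0$ gives $\{y_a,\{s_i,s_j\}_\nu\}_\nu=0$ for every $\nu$; expanding this polynomial identity in $\nu$ yields the three relations
\[ \{y_a,P\}_{\mathcal A}=0,\qquad \{y_a,Q\}_{\mathcal A}+\{y_a,P\}_{\mathcal B}=0,\qquad \{y_a,Q\}_{\mathcal B}=0. \]
These alone do not force $\{y_a,P\}_{\mathcal B}=0$; this is exactly where $\mu_a\neq 0$ is used. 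The Casimir identity gives $\{y_a,P\}_{\mathcal A}=-\mu_a\{y_a,P\}_{\mathcal B}$, and combined with the first relation and $\mu_a\neq0$ it forces $\{y_a,P\}_{\mathcal B}=0$ and hence $\{y_a,Q\}_{\mathcal A}=0$. Therefore $\{y_a,\{s_i,s_j\}_\lambda\}_{\mathcal B}=\{y_a,P\}_{\mathcal B}+\lambda\{y_a,Q\}_{\mathcal B}=0$, so every $X^{\mathcal B}_{y_a}$ annihilates $\{s_i,s_j\}_\lambda$ and the coefficients depend only on $(s,y)$. Besides this computation, the technical care is concentrated in the Jordan--Kronecker bookkeeping used above, namely that $\mathcal{D}=\mathcal{B}(\mathcal{K})$ has constant rank $m$ and that $\mathcal{M}=\mathcal{D}^0$, which I would extract from the block normal form of Theorem~\ref{T:Jordan-Kronecker_theorem}.
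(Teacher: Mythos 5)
Your argument cannot really be measured against an in-paper proof: the text states Theorem~\ref{T:BiPoissRedCoreMantle} and immediately defers to \cite[Theorem 5.9]{Kozlov23JKRealization}, so what you have written is a self-contained substitute rather than a variant of something in this paper. As such it holds up. The skeleton --- span $\mathcal{K}$ by Casimirs $y_a$ of regular brackets $\mathcal{A}_{\mu_a}$ with $\mu_a\neq 0,\infty$, pass to the image distribution $\mathcal{D}=\mathcal{B}(\mathcal{K})$, identify $\mathcal{D}^0=\mathcal{M}$ pointwise from the Jordan--Kronecker normal form, get involutivity from $[X^{\mathcal{B}}_{y_a},X^{\mathcal{B}}_{y_b}]=X^{\mathcal{B}}_{\{y_a,y_b\}_{\mathcal{B}}}=0$, and then kill the $x$-dependence of $\{s_i,s_j\}_{\lambda}$ by combining the $\nu$-expansion of the Jacobi identity for $\mathcal{A}_{\nu}$ with the proportionality $X^{\mathcal{A}}_{y_a}=-\mu_a X^{\mathcal{B}}_{y_a}$ --- is exactly the right mechanism. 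The final computation (the three coefficient relations, then $\mu_a\neq 0$ forcing $\{y_a,P\}_{\mathcal{B}}=\{y_a,Q\}_{\mathcal{A}}=0$) is correct and is indeed the only step that is not pointwise linear algebra.

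Two bookkeeping remarks, neither of which is a gap. First, your claim that $\mathcal{D}=\mathcal{B}(\mathcal{K})$ has constant rank $m$ is the tell that $m$ in the statement must be read as $\tfrac{1}{2}\operatorname{rk}\mathcal{P}-n_J=\sum_i(k_i-1)$ rather than the literal $\operatorname{rk}\mathcal{P}-2n_J$: with the literal value the chart $(x,s,y)$ would contain $\dim M+m$ functions, and $\mathcal{B}$ maps the core of a Kronecker block of size $2k_i-1$ onto a subspace of dimension $k_i-1$, not $2(k_i-1)$. Your proof is internally consistent with the corrected convention, which is clearly the intended one. Second, when invoking Corollary~\ref{Cor:LocSpan} you should say explicitly that the regular values $\mu_a$ may be chosen finite, nonzero and (in the real case) real --- immediate, since the non-regular values at a point form a finite subset of $\bar{\mathbb{C}}$ --- because both the identity $X^{\mathcal{A}}_{y_a}=-\mu_a X^{\mathcal{B}}_{y_a}$ and the final division by $\mu_a$ degenerate at $\mu_a=0$ and at $\mu_a=\infty$; you flag $\mu_a\neq 0$ but the finiteness deserves the same one line.
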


Simply speaking, in the coordinatex $(x, s, y)$ from Theorem~\ref{T:BiPoissRedCoreMantle}  the matrices of the Poisson brackets take the form \begin{equation} \label{Eq:CoreMantleMatr} \mathcal{A}_{\lambda} = \left( \begin{matrix} * & * & * \\ * & C_{\lambda}(s, y) & 0 \\ * & 0 & 0\end{matrix} \right), \end{equation}  where $*$ are some matrices. 

\subsubsection{Factorization theorem}

Consider the coordinates $(s,y)$ from Theorem~\ref{T:BiPoissRedCoreMantle} and the corresponding pencil \[\mathcal{A}_{\lambda}' =  \left( \begin{matrix}   C_{\lambda}(s, y) & 0 \\  0 & 0\end{matrix} \right).\] We can ``group'' the coordinates $s$ by eigenvalues. Formally, we have the following statement.

\begin{theorem}  \label{T:TrivKronFact} Let $\mathcal{P} = \left\{ \mathcal{A}_{\lambda} = \mathcal{A} + \lambda \mathcal{B}\right\}$ be a Poisson pencil on real smooth manifold $M$ and $p_{\mathcal{P}(x)}$ be its characteristic polynomial at $x\in M$. Assume the following:

\begin{enumerate}
    \item For all $x \in M$ we have \[\deg p_{\mathcal{P}(x)} =  \operatorname{rk} \mathcal{P}(x) = \operatorname{const}\] on $M$. In other words, a Jordan--Kronecker decomposition of a pencil $\mathcal{P}(x)$ consists of Jordan blocks and $r = \dim M - \operatorname{rk}\mathcal{P}$ trivial $1 \times 1$ Kronecker blocks. 

\item At a point $p \in M$ the characteristic polynomial $p_{\mathcal{P}(x)}$  has $k$ real (distinct) eigenvalues $\lambda_1, \dots, \lambda_k$  with multiplicities $m_1, \dots, m_k$ respectively and $s$ pairs of complex (non-real) conjugate eigenvalues $\mu_1, \bar{\mu}_1,\dots, \mu_s, \bar{\mu}_s$ with  multiplicities $l_1, \dots, l_s$.
    
\end{enumerate}  Then in a neighbourhood of $p \in M$ there exists a local coordinate
system
\begin{gather*} x_1 = \left(x_1^1, \dots, x_1^{2m_1}\right), \qquad \dots, \qquad x_k = \left(x_k^1, \dots, x_k^{2m_k}\right), \\ u_1 = \left(u_1^1, \dots, u_1^{4l_1}\right), \quad \dots, \quad u_s = \left(u_s^1, \dots, x_s^{4l_s}\right), \quad z = (z_1, \dots, z_r), \end{gather*} such that the matrices of Poisson brackets have the form \begin{equation} \label{Eq:FormOneEigen} \mathcal{A}_{\lambda} = \left( \begin{matrix} C^1_{\lambda}(x_1, z) & & & & & & & \\ & \ddots & & & & & \\ & & C^k_{\lambda}(x_k, z) & & & & & \\ & & & D^1_{\lambda}(u_1, z) & & & & \\ & & & & \ddots & & & \\ & & &  & & & D^s_{\lambda}(u_s, z)  & \\ & & &  & & &   & 0_r \end{matrix} \right). \end{equation} Moreover,  at the point $p \in M$ each characteristic polynomial of the pencils $\left\{ C^t_{\lambda}(x_t, z) \right\}$ has a single real eigenvalue. And each characteristic polynomial of the pencils $\left\{ D^t_{\lambda}(u_t, z) \right\}$ has a single pair of complex eigenvalues at $p \in M$. \end{theorem}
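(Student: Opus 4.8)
The plan is to reduce the statement, via Theorem~\ref{T:BiPoissRedCoreMantle}, to a purely ``symplectic'' splitting problem in the $s$-directions and then to separate eigenvalues through the spectral decomposition of the recursion operator. Since the hypothesis $\deg p_{\mathcal{P}(x)} = \operatorname{rk}\mathcal{P}(x)$ forces $m=0$ in Theorem~\ref{T:BiPoissRedCoreMantle}, I first pass to local coordinates $s = (s_1,\dots,s_{2n_J})$, $y = (y_1,\dots,y_r)$ in which
\[ \mathcal{A}_\lambda = \begin{pmatrix} C_\lambda(s,y) & 0 \\ 0 & 0 \end{pmatrix}, \]
where the $2n_J \times 2n_J$ block $C_\lambda(s,y)$ is invertible for every regular $\lambda$ and $dy_1,\dots,dy_r$ span the core distribution, so the $y_i$ are common Casimirs of every $\mathcal{A}_\lambda$. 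These $y$ will play the role of the common parameter $z$ in \eqref{Eq:FormOneEigen}, and it remains to block-diagonalize the family $\{C_\lambda(s,y)\}$ in the $s$-directions, one block per distinct eigenvalue, with the blocks allowed to retain their dependence on $y$.

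For fixed $y$ the inverse forms $\omega_\lambda = C_\lambda^{-1}$ constitute a pencil of symplectic forms on the $s$-leaf, and the recursion operator $P = \mathcal{B}^{-1}\mathcal{A}$ restricts to a $y$-parametrized field of endomorphisms $P(s,y)$ of the $s$-directions with $\det(P-\lambda E) = p_{\mathcal{P}}(\lambda)^2$. Because $\deg p_{\mathcal{P}(x)}$ is constant and the distinct roots $\lambda_1,\dots,\lambda_k$ (real) and $\mu_1,\bar\mu_1,\dots,\mu_s,\bar\mu_s$ (complex) are pairwise distinct at $p$, the implicit function theorem factors $p_{\mathcal{P}(x)}$ smoothly near $p$ into pairwise coprime factors with real coefficients,
\[ p_{\mathcal{P}(x)}(\lambda) = \prod_{t=1}^k \left(\lambda-\lambda_t(x)\right)^{m_t}\,\prod_{t=1}^s\bigl((\lambda-\mu_t(x))(\lambda-\bar\mu_t(x))\bigr)^{l_t}, \]
one factor per distinct real eigenvalue or conjugate pair. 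Squaring gives the analogous coprime factorization of the characteristic polynomial $\chi_P = p_{\mathcal{P}}^2$ of $P$; by Cayley--Hamilton $\chi_P(P)=0$, so Bézout's identity for these coprime factors produces smooth spectral projectors $\Pi_t$ — polynomials in $P$ with smooth coefficients — onto the generalized eigenspaces $E_t = \operatorname{Im}\Pi_t$. These are therefore smooth distributions of constant rank $2m_t$ (real eigenvalue) and $4l_t$ (conjugate pair).

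The decisive structural input is that $P$ is a Nijenhuis operator: compatibility of $\mathcal{A}$ and $\mathcal{B}$ makes its Nijenhuis torsion vanish on each symplectic leaf. Consequently each distribution $E_t$ is integrable, and since $P$ is self-adjoint with respect to every $\omega_\lambda$, its generalized eigenspaces for different eigenvalues are mutually $\omega_\lambda$-orthogonal; hence the forms $\omega_\lambda$, and therefore the bivectors $C_\lambda$, are simultaneously block-diagonal with respect to $\bigoplus_t E_t$. Applying the Frobenius theorem to the (smoothly $y$-parametrized) integrable distributions $E_t$ furnishes coordinates $x_t$ for a real eigenvalue and $u_t$ for a conjugate pair, grouped by eigenvalue, together with the common transverse coordinates $z=y$, in which the pencil acquires the block form \eqref{Eq:FormOneEigen}; in the complex case one uses the real generalized eigenspaces and the real Jordan form (Theorem~\ref{T:Jordan-Kronecker_theorem}, real version) to obtain the genuinely real $4l_t\times 4l_t$ blocks $D^t_\lambda$. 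The ``moreover'' clause is then automatic: the block attached to $E_t$ sees only the factor $(\lambda-\lambda_t(x))^{m_t}$ (respectively $((\lambda-\mu_t(x))(\lambda-\bar\mu_t(x)))^{l_t}$) of the characteristic polynomial, so at $p$ it has the single eigenvalue $\lambda_t(p)$ (respectively the single conjugate pair $\mu_t(p),\bar\mu_t(p)$).

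The main obstacle is the parametrized and degenerate nature of the problem: the recursion operator $P$ is not defined on all of $M$ because $\mathcal{B}$ is degenerate, so one genuinely works with the $y$-parametrized family $P(s,y)$ and must verify that the eigenspace distributions, the projectors $\Pi_t$, and the resulting coordinate change are smooth jointly in $(s,y)$, and that the integrating coordinates can be chosen to respect the common Casimir directions $z$. Checking that the Nijenhuis splitting theory applies uniformly in the parameter $y$ — rather than leaf-by-leaf — and that the real-form decomposition for the conjugate pairs $\mu_t,\bar\mu_t$ yields real coordinates $u_t$ are the two points demanding the most care.
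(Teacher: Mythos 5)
Your proposal is correct and follows essentially the same route as the paper: both reduce to the common symplectic leaves $\{z=\mathrm{const}\}$ (possible because all Kronecker blocks are trivial, so the regular brackets share Casimirs) and then split the resulting nondegenerate pencil by eigenvalues via the Nijenhuis/recursion-operator structure. The only difference is that the paper cites Turiel's lemma or the Nijenhuis splitting theorem \cite[Theorem 3.1]{BolsinovNijenhuis} as a black box, whereas you unpack that splitting (spectral projectors, integrability of generalized eigenspaces, $\omega_\lambda$-orthogonality) by hand.
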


\begin{remark}
In the complex case we have a natural analog of  Theorem~\ref{T:TrivKronFact}. Simply speaking, we do not consider complex conjugate eigenvalues and get similar coordinates $x_1, \dots, x_k, z$.  \end{remark}

\begin{proof}[Proof of Theorem~\ref{T:TrivKronFact}] Since all Kronecker blocks are $1 \times 1$ all  regular forms $\mathcal{A}_{\lambda}$ have common (local) Casimir functions $z_1, \dots, z_r$. They also have the same symplectic leaves $(S_z, \omega_{\lambda, z})$, i.e. level sets of Casimir functions: \[ S_z = \left\{ z_1 = \operatorname{const}, \dots, z_r = \operatorname{const}\right\}. \] On each symplectic leaf $S_z$ the pencil $\mathcal{P}$ defines a  \textbf{nondegenerate}\footnote{A pair of nondegerenerate Poisson brackets $\mathcal{A}$ and $\mathcal{B}$ is compatible iff the recursion operator $P =\mathcal{A}\mathcal{B}^{-1}$ is a Nijenhuis operator, i.e. $N_{P} = 0$. Compatible nondegenerate Poisson brackets are the same as compatible symplectic forms $\mathcal{A}^{-1}$ and $\mathcal{B}^{-1}$.} Poisson pencil $\mathcal{P}^z$. We can easily "split" the nondegenerate pencils $\mathcal{P}^z$ using \cite[Lemma 2]{turiel}. Alternatively, one can use  the splitting theorem for Nijenhuis operators (see \cite[Theorem 3.1]{BolsinovNijenhuis}) We get coordinates $x_1, \dots, x_k, u_1, \dots u_s$ such that the matrices of the pencils $\mathcal{P}^z$ are block-diagonal: \[ \mathcal{P}^z =  \left( \begin{matrix} C^1_{\lambda}(x_1, z) & &  \\ & \ddots &  \\ & & D^s_{\lambda}(u_s, z) \end{matrix} \right). \] Since $z_i$ are Casimir function, the pencil $\mathcal{P}$ takes the form \eqref{Eq:FormOneEigen}. Theorem~\ref{T:TrivKronFact} is proved. \end{proof}

\subsection{Eigenvalues of Poisson pencils}

Lemmas~\ref{L:EigenLemma} and \ref{L:EigenLemmaRealConj} are the key technical results underlying the proof of Theorem~\ref{Th:CasEigenComm}. Actually, the rest of the proof is simple Linear algebras.  Although we were not able to find the statement of Lemma~\ref{L:EigenLemma} in
the literature, it is well-known to the experts in the field. For nondegenerate pencils
Lemma~\ref{L:EigenLemma} follows from a similar statement about eigenvalues of Nijenhuis operators
(see \cite[Proposition 2.3]{BolsinovNijenhuis}). We slightly generalize the statement of \cite[Lemma 9.8]{Kozlov23JKRealization} by providing a more refined condition on the eigenvalue,  the proof remains the same.

\begin{definition} Let $\mathcal{P} = \left\{\mathcal{A} + \lambda \mathcal{B} \right\}$ be a Poisson pencil on $M$. We say that its eigenvalue $\lambda(x)$ is an \textbf{isolated} eigenvalue if there is a neighborhood $U \subset M \times \mathbb{C}$ of the graph \begin{equation} \label{Eq:Graph} \left\{(x, \lambda(x)) \,\, \bigr| \,\, x \in M\right\} \subset M \times \mathbb{C} \end{equation} that has no other eigenvalues of $\mathcal{P}$.  \end{definition}

\begin{lemma} \label{L:EigenLemma} Let $\mathcal{P} = \left\{\mathcal{A} + \lambda \mathcal{B} \right\}$ be a Poisson pencil on $M$ and $\lambda(x)$ be its (finite) eigenvalue on $M$. In the real case $\lambda(x)$ is real $C^{\infty}$-smooth and in the complex case it is complex-analytic. Assume that the following two conditions hold:

\begin{enumerate}

\item  $\operatorname{rk} \mathcal{P}(x) = \operatorname{const}$. 

\item $\lambda(x)$ is an isolated eigenvalue. 

\end{enumerate}

Then for any point $x \in M$ we have\begin{equation} \label{Eq:DLambdaMain} (\mathcal{A} - \lambda(x)\mathcal{B})d\lambda(x) = 0. \end{equation}
\end{lemma}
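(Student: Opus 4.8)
The plan is to reduce the statement to the nondegenerate case, where it is a consequence of the eigenvalue theory of Nijenhuis operators, and then to propagate the resulting identity through the Kronecker and core directions. Observe first that \eqref{Eq:DLambdaMain} is exactly the assertion that the covector $d\lambda(x)$ lies in the kernel of the degenerate form $\mathcal{A}_{-\lambda(x)}(x) = \mathcal{A}(x) - \lambda(x)\mathcal{B}(x)$ at every point. In the nondegenerate case, when $\mathcal{B}$ is invertible, the recursion operator $P = \mathcal{B}^{-1}\mathcal{A}$ is a Nijenhuis operator whose eigenvalues are the roots of $p_{\mathcal{P}}$; by the eigenvalue formula for Nijenhuis operators (\cite[Proposition 2.3]{BolsinovNijenhuis}) the differential of such an eigenvalue is a corresponding eigencovector of the recursion operator, which unwinds to $\mathcal{A}\,d\lambda = \lambda\,\mathcal{B}\,d\lambda$, i.e. $(\mathcal{A}-\lambda\mathcal{B})d\lambda = 0$. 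Thus the heart of the statement is already available in the nondegenerate situation, and the remaining work is to set up a reduction to it.

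To perform this reduction I would first pass to an open dense subset of $M$. Since both sides of \eqref{Eq:DLambdaMain} depend continuously on $x$, it suffices to prove the identity on a dense open set and then invoke continuity. Under the standing assumption $\operatorname{rk}\mathcal{P}(x) = \operatorname{const}$, the quantity $\deg p_{\mathcal{P}(x)}$ (equivalently, by Proposition~\ref{Prop:CoreDistDim}, the core dimension $\dim\mathcal{K}_x$) is locally constant on an open dense set, on which Theorems~\ref{T:BiPoissRedCoreMantle} and \ref{T:TrivKronFact} apply. Theorem~\ref{T:BiPoissRedCoreMantle} provides core/mantle coordinates $(x,s,y)$ in which $\mathcal{A}_\mu$ has the block form \eqref{Eq:CoreMantleMatr}, with the mantle block $C_\mu(s,y)$ a nondegenerate pencil (only Jordan blocks) carrying the entire characteristic polynomial, and with the core directions $y$ spanning the kernel of the bottom zero block. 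Using that $\lambda$ is isolated, Theorem~\ref{T:TrivKronFact} applied to $C_\mu(s,y)$ splits off the sub-block on which $\lambda$ is the unique eigenvalue; this sub-block is nondegenerate and parametrized by the core coordinates, so the nondegenerate result above yields the mantle ($s$-)components of \eqref{Eq:DLambdaMain}.

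It then remains to check that $\mathcal{A}_{-\lambda}\,d\lambda$ vanishes in the remaining directions as well, which is where I expect the main difficulty. Because the characteristic polynomial is carried entirely by the mantle block $C_\mu(s,y)$, which does not involve the Kronecker coordinates $x$, the eigenvalue satisfies $d_x\lambda = 0$; consequently $d\lambda$ lies in the mantle and the core ($y$-)component of $\mathcal{A}_{-\lambda}d\lambda$ vanishes automatically, since the corresponding block of the bracket is zero. The genuinely delicate point is the Kronecker ($x$-)component, which equals $\mathcal{A}_{-\lambda}(dx_p, d\lambda) = \{x_p,\lambda\}_{\mathcal{A}-\lambda\mathcal{B}}$ and amounts to showing that $\lambda$ is annihilated by the Kronecker vector fields $v_{-\lambda,p}$ from Theorem~\ref{T:BiPoissRedCoreMantle}. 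This cannot be read off the block decomposition alone; I expect it to follow from the compatibility (Jacobi) identities of the pencil, which constrain the interaction of the $v_{\mu,p}$ with the mantle and force $\lambda$ to be independent of the Kronecker and core directions, exactly as one verifies directly in the model case of a single Kronecker block together with a single Jordan block. Combining the vanishing of the mantle, core and Kronecker components gives \eqref{Eq:DLambdaMain} on the dense locus, and continuity then extends it to all of $M$.
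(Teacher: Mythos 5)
Your strategy correctly locates the difficulty but does not resolve it. In the coordinates of Theorem~\ref{T:BiPoissRedCoreMantle} the assertion $(\mathcal{A}-\lambda\mathcal{B})\,d\lambda=0$ splits into two parts: (i) the mantle part, namely that the $s$-components of $d\lambda$ lie in $\operatorname{Ker}C_{-\lambda}$, which your Nijenhuis-operator argument on the nondegenerate block does give; and (ii) the identities $v_{-\lambda,i}(\lambda)=0$ for the Kronecker vector fields, which you explicitly flag as the ``genuinely delicate point'' and then only say you \emph{expect} to follow from the compatibility identities. But (ii) is precisely the content of the lemma beyond the nondegenerate case --- once (i) is granted, (ii) is equivalent to $d\lambda\in\operatorname{Ker}\mathcal{A}_{-\lambda}$ --- so deferring it to an unexecuted computation leaves the proof with a hole exactly where the new content sits. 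It is not at all clear that ``the Jacobi identities constrain the interaction of the $v_{\mu,i}$ with the mantle'' in a way that yields (ii) without essentially redoing the lemma from scratch.

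The paper's own proof is far shorter and needs no normal forms, and it also closes your gap. Fix $x_0$ and set $\lambda_0=\lambda(x_0)$; let $S$ be the symplectic leaf of the Poisson bracket $\mathcal{A}-\lambda_0\mathcal{B}$ through $x_0$. Since $T_xS=\operatorname{Im}(\mathcal{A}-\lambda_0\mathcal{B})|_x$ for every $x\in S$, the rank of $\mathcal{A}-\lambda_0\mathcal{B}$ equals $\dim S$ along $S$ and hence stays strictly below the constant $\operatorname{rk}\mathcal{P}$; thus $\lambda_0$ is an eigenvalue at every point of $S$, and the isolatedness hypothesis (via an open--closed argument on the connected leaf) forces $\lambda\equiv\lambda_0$ on $S$. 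Consequently $d\lambda(x_0)$ annihilates $T_{x_0}S=\operatorname{Im}\mathcal{A}_{-\lambda_0}(x_0)$, and by skew-symmetry $\bigl(\operatorname{Im}\mathcal{A}_{-\lambda_0}\bigr)^0=\operatorname{Ker}\mathcal{A}_{-\lambda_0}$, which is \eqref{Eq:DLambdaMain} at every point of $M$ --- no passage to a dense subset or continuity argument is required. If you want to salvage your route, the cleanest fix is to import exactly this leaf argument to establish step (ii); but at that point the core/mantle and factorization machinery becomes superfluous.
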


Formula~\eqref{Eq:DLambdaMain} can also be rewritten as \[ d \lambda(x) \in \operatorname{Ker} \mathcal{A}_{-\lambda(x)}.\] Note that we do not consider pairs of complex conjugate eigenvalues in Lemma~\ref{L:EigenLemma} (cmp. Lemma~\ref{L:EigenLemmaRealConj})

\begin{proof}[Proof of Lemma~\ref{L:EigenLemma}] Let $x_0 \in M$, denote $\lambda(x_0) = \lambda_0$. Notice that $\lambda_0$ is an eigenvalue at $ x \in M$ iff \[ \operatorname{rk} \left( \mathcal{A} - \lambda_0 \mathcal{B}\right) \bigr|_x < \operatorname{rk} \mathcal{P}.\]
Let $S$ be the symplectic leaf of $\mathcal{A} - \lambda_0 \mathcal{B}$ through $x_0$. Then \[\operatorname{dim} S = \operatorname{dim} \operatorname{Im} \left( \mathcal{A} - \lambda_0 \mathcal{B}\right) \bigr|_x = \operatorname{rk}\left( \mathcal{A} - \lambda_0 \mathcal{B}\right) \bigr|_x \] 
for any $x \in S$. Thus $\lambda_0$ is an eigenvalue on $S$. Since $\lambda(x)$ is an isolated eigenvalue,  $\lambda(x) = \lambda_0$ on $S$. We get that $d \lambda(x) = 0$ on $T_x S =\operatorname{Im} \left( \mathcal{A} - \lambda_0 \mathcal{B}\right) \bigr|_x$ for $x \in S$. Thus, \[d \lambda_i(x_0) \in \left(\operatorname{Im} \left( \mathcal{A} - \lambda_0 \mathcal{B}\right) \bigr|_{x_0}\right)^{0} =  \operatorname{Ker} \left( \mathcal{A} - \lambda_0 \mathcal{B}\right) \bigr|_{x_0},\]
which proves \eqref{Eq:DLambdaMain} and Lemma~\ref{L:EigenLemma}.
\end{proof}

In practice it may be convenient to check the second condition of Lemma~\ref{L:EigenLemma} using the following simple statement.

\begin{proposition} \label{Prop:IsolEigen} Let $\mathcal{P} = \left\{\mathcal{A} + \lambda \mathcal{B} \right\}$ be a Poisson pencil on $M$, $p_{\mathcal{P}}$ be its characteristic polynomial and $\lambda(x)$ be its (finite) smooth eigenvalue on $M$. If $\deg p_{\mathcal{P}}(\lambda)= \operatorname{const}$ on $M$ and the multiplicity of $\lambda(x)$ is constant on $M$, then the eigenvalue $\lambda(x)$ is an isolated eigenvalue.  \end{proposition}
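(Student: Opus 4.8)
The plan is to prove the statement locally near an arbitrary point $x_0 \in M$ and then glue: a neighborhood of the graph~\eqref{Eq:Graph} can be assembled from local neighborhoods produced around each $x_0$. Write $\lambda_0 = \lambda(x_0)$ and let $N = \deg p_{\mathcal{P}(x)}$ and $m$ denote the constant degree and the constant multiplicity. Since $\lambda(x)$ is a root of $p_{\mathcal{P}(x)}$ of multiplicity exactly $m$ for every $x$, I would first record the factorization
\[ p_{\mathcal{P}(x)}(\mu) = \left(\mu - \lambda(x)\right)^{m} q_x(\mu), \qquad \deg q_x = N - m, \]
in which $q_x(\lambda(x)) \neq 0$ for all $x$ (otherwise the multiplicity would exceed $m$). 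The eigenvalues of $\mathcal{P}(x)$ different from $\lambda(x)$ are precisely the roots of $q_x$, so it suffices to show that these roots stay away from $\lambda(x)$ for $x$ close to $x_0$.

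The decisive step is to show that the coefficients of $p_{\mathcal{P}(x)}$, regarded as a monic polynomial of the fixed degree $N$, depend continuously on $x$ (in fact smoothly in the real case and analytically in the complex case, but continuity is all I need). Granting this, the coefficients of $q_x$ depend continuously on $x$ as well: they are obtained from those of $p_{\mathcal{P}(x)}$ by division by $\left(\mu - \lambda(x)\right)^m$, and $\lambda(x)$ is smooth by hypothesis, so each coefficient of $q_x$ is a polynomial expression in the coefficients of $p_{\mathcal{P}(x)}$ and in $\lambda(x)$. Consequently the evaluation map $(x, \mu) \mapsto q_x(\mu)$ is continuous on $M \times \mathbb{C}$.

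Once continuity is available the conclusion follows quickly. At the base point $q_{x_0}(\lambda_0) \neq 0$, so there are a neighborhood $O \ni x_0$ and a radius $\varepsilon > 0$ with $q_x(\mu) \neq 0$ whenever $x \in O$ and $|\mu - \lambda_0| < \varepsilon$. Shrinking $O$ so that $|\lambda(x) - \lambda_0| < \varepsilon / 2$ on $O$, every $\mu$ with $|\mu - \lambda(x)| < \varepsilon / 2$ also satisfies $|\mu - \lambda_0| < \varepsilon$, hence $q_x(\mu) \neq 0$; thus the only eigenvalue of $\mathcal{P}(x)$ within distance $\varepsilon / 2$ of $\lambda(x)$ is $\lambda(x)$ itself. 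The union over $x_0$ of the sets $\{(x, \mu) : x \in O,\ |\mu - \lambda(x)| < \varepsilon / 2 \}$ is then a neighborhood of the graph~\eqref{Eq:Graph} containing no other eigenvalue of $\mathcal{P}$, which is exactly the assertion that $\lambda(x)$ is isolated.

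The hard part will be establishing that the coefficients of $p_{\mathcal{P}(x)}$ depend continuously on $x$, and this is exactly where the constant-degree hypothesis must be used. The difficulty is that $\operatorname{rk}\mathcal{P}(x)$ need not be constant under our assumptions (the Kronecker part may degenerate at special points), so the defining $\gcd$-of-Pfaffians formula is not visibly continuous. Since the coefficients are, up to sign, the elementary symmetric functions of the finite eigenvalues taken with multiplicity, it suffices to prove that the eigenvalue multiset of $\mathcal{P}(x)$ varies continuously with $x$. Constant degree guarantees that the total multiplicity stays equal to $N$, so no eigenvalue can be created or destroyed, and the regularity of $\mathcal{B}$ prevents any finite eigenvalue from escaping to $\infty$; in particular no multiplicity can leak between the Jordan part and the Kronecker part of the Jordan--Kronecker decomposition, as that would change the degree. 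The only remaining possibility to rule out is eigenvalues drifting across a fixed circle around $\lambda_0$ while their total count is preserved. I would control this by an argument-principle (Rouché) count of the eigenvalues enclosed by small circles, made uniform for $x$ near $x_0$; this is the genuine technical core, after which the elementary steps above finish the proof.
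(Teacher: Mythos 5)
Your argument is essentially the paper's: both proofs reduce isolation to the continuous dependence of the root multiset of $p_{\mathcal{P}(x)}$ on $x$ combined with the constant-multiplicity hypothesis. The paper's proof is terser --- it cites continuous dependence of roots on coefficients (via Rouch\'e) and observes that constant multiplicity forces each continuous root branch either to coincide with $\lambda(x)$ identically or to avoid it identically --- whereas your factorization $p_{\mathcal{P}(x)}(\mu) = (\mu-\lambda(x))^m q_x(\mu)$ with $q_x(\lambda(x)) \neq 0$ makes the uniform $\varepsilon$-tube around the graph explicit, which is arguably cleaner. The one step you defer, continuity of the coefficients $p_j(x)$, is not addressed by the paper either: throughout (e.g.\ in Theorem~\ref{Th:CasEigenComm}) the $p_j$ are treated as smooth functions on $M$, so this is a standing assumption rather than something Proposition~\ref{Prop:IsolEigen} is expected to establish. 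Be aware, though, that your proposed route to it is circular as sketched: you want to deduce continuity of the $p_j$ from continuity of the eigenvalue multiset, established by a Rouch\'e count --- but an argument-principle count of eigenvalues requires a function of $\mu$, holomorphic and continuous in $x$, whose zeros are the eigenvalues with the correct multiplicities, i.e.\ essentially $p_{\mathcal{P}(x)}(\mu)$ with continuously varying coefficients, which is what you set out to prove. A non-circular treatment would have to work directly with the Pfaffians of the $r\times r$ minors of $\mathcal{A}(x)-\mu\mathcal{B}(x)$ (which are visibly polynomial in $x$ and $\mu$) and control the extra roots coming from the Kronecker part and possible rank drops; otherwise, simply take smoothness of the $p_j$ as part of the hypotheses, as the paper implicitly does, and your proof closes.
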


\begin{proof} It is well-known that the n roots of a polynomial of degree n depend continuously on the coefficients (that can be proved using Rouch\'{e}’s Theorem, see e.g. \cite{Alexanderian13}). Assume that in a neighborhood $U \subset M$ the eigenvalues are $\lambda_1(x), \lambda_2(x), \dots, \lambda_N(x)$. Since the multiplicity of $\lambda(x)$ is constant, either $\lambda_j(x) \not= \lambda(x)$ or $\lambda_j(x) = \lambda(x)$ for all $x \in M$. Thus, there are no other eigenvalues in a sufficiently small neighborhood of the graph \eqref{Eq:Graph}. Proposition~\ref{Prop:IsolEigen} is proved. \end{proof}

\subsubsection{Complex conjugate eigenvalues}

In the real case we will  will also need the following analog of Lemma~\ref{L:EigenLemma} for complex conjugate eigenvalues. 

\begin{lemma} \label{L:EigenLemmaRealConj} Let $\mathcal{P} = \left\{\mathcal{A} + \lambda \mathcal{B} \right\}$ be a Poisson pencil on real manifold $M$ and $\lambda(x) = \alpha(x) + i \beta(x)$ be its complex eigenvalue on $M$. Then almost everywhere on $M$  we have \begin{equation} \label{Eq:DLambdaMainRealConj} d\lambda(x) = d\alpha(x) + i \cdot d \beta(x) \in \operatorname{Ker}_{-\lambda(x)}^{\mathbb{C}} + \mathcal{K}^{\mathbb{C}}.  \end{equation} 
\end{lemma}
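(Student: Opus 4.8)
The plan is to reduce, at a generic point, to a nondegenerate pencil carrying a single complex-conjugate eigenvalue pair and then to invoke the known behaviour of eigenvalues of Nijenhuis (recursion) operators. Since the conclusion is only claimed almost everywhere, I first restrict to the open dense set $M_0 \subset M$ on which $\operatorname{rk}\mathcal{P}(x)$, $\deg p_{\mathcal{P}(x)}$ and the multiplicity of $\lambda(x)$ (equivalently of $\bar\lambda(x)$) are all locally constant; by Proposition~\ref{Prop:IsolEigen} this is exactly the regime in which $\lambda(x)$ behaves like an isolated eigenvalue and the splitting theorems apply. All statements below are understood on $M_0$.

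Next I would strip away the core and the nontrivial Kronecker part. Applying Theorem~\ref{T:BiPoissRedCoreMantle} I obtain local coordinates $(x, s, y)$ in which $\mathcal{K} = \operatorname{span}\{dy_1, \dots, dy_{m+r}\}$ and the pencil has the block shape \eqref{Eq:CoreMantleMatr}, the eigenvalue-carrying data being the nondegenerate block $C_\lambda(s,y)$; in particular $\lambda$ is a function of $(s,y)$ alone. Treating $y$ as parameters and applying (the substance of) Theorem~\ref{T:TrivKronFact} to the nondegenerate pencil $\{C_\lambda(s,y)\}$, I split its coordinates by eigenvalue as in \eqref{Eq:FormOneEigen} and isolate the block $D_\lambda = D^t_\lambda(u, y)$ whose only eigenvalues at the base point are the complex-conjugate pair $\alpha \pm i\beta$. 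Because the decomposition \eqref{Eq:FormOneEigen} is genuinely block-diagonal and the eigenvalues of distinct blocks are distinct, $\lambda$ is determined by $D_\lambda$ alone, so $d\lambda$ has components only along $du$ and $dy$, and a covector supported on the $u$-block lies in $\operatorname{Ker}\mathcal{A}_{-\lambda}$ of the full pencil iff it lies in $\operatorname{Ker} D_{-\lambda}$.

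On each symplectic leaf $\{y = \mathrm{const}\}$ the block $D_\lambda$ is a nondegenerate compatible pair, so the recursion operator $P^* = \mathcal{B}^{-1}\mathcal{A}$ on covectors is a Nijenhuis operator, and its $\lambda$-eigenspace is precisely $\operatorname{Ker}(\mathcal{A} - \lambda\mathcal{B}) = \operatorname{Ker}^{\mathbb{C}}_{-\lambda}$, since $P^*\xi = \lambda\xi$ is equivalent to $(\mathcal{A} - \lambda\mathcal{B})\xi = 0$. On $M_0$ the eigenvalue $\lambda = \alpha + i\beta$ is semisimple of locally constant multiplicity, so the eigenvalue result for Nijenhuis operators \cite[Proposition 2.3]{BolsinovNijenhuis} applies and gives that the leafwise differential of $\lambda$ lies in the complexified $\lambda$-eigendistribution; that is, the $du$-component of $d\lambda$ lies in $\operatorname{Ker}^{\mathbb{C}}_{-\lambda}$. (In the real-analytic case one may instead complexify $M$, extend $\mathcal{P}$ holomorphically, and apply the complex Lemma~\ref{L:EigenLemma} directly, reaching the same conclusion.)

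Finally I reassemble: writing $d\lambda = (du\text{-part}) + (dy\text{-part})$, the first summand lies in $\operatorname{Ker}^{\mathbb{C}}_{-\lambda}$ by the previous step, while the second lies in $\operatorname{span}\{dy\}^{\mathbb{C}} = \mathcal{K}^{\mathbb{C}}$; hence $d\lambda \in \operatorname{Ker}^{\mathbb{C}}_{-\lambda} + \mathcal{K}^{\mathbb{C}}$, which is \eqref{Eq:DLambdaMainRealConj}. The main obstacle is the third step: making precise that the Nijenhuis eigenvalue result (a leafwise, transverse-blind statement) produces exactly the $du$-component of $d\lambda$, and that the remaining transverse derivatives of $\lambda$ along the Casimir directions land precisely in $\mathcal{K}^{\mathbb{C}}$. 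It is exactly this parametric $y$-dependence that both forces the extra $\mathcal{K}^{\mathbb{C}}$ summand — absent in the complex Lemma~\ref{L:EigenLemma} — and confines the conclusion to the generic set $M_0$, i.e. to almost everywhere.
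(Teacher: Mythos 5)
Your argument is essentially the paper's own proof: the paper disposes of this lemma in one line by citing Theorems~\ref{T:BiPoissRedCoreMantle} and \ref{T:TrivKronFact}, and your write-up simply fills in the details of that reduction (together with the Nijenhuis-operator eigenvalue fact that the paper itself invokes for the nondegenerate case of Lemma~\ref{L:EigenLemma}). One small imprecision: because of the off-diagonal $*$ blocks coupling the $x$- and $s$-coordinates in \eqref{Eq:CoreMantleMatr}, a covector supported on the $u$-block that is killed by $D_{-\lambda}$ lies only in $\operatorname{Ker}^{\mathbb{C}}_{-\lambda(x)}+\mathcal{K}^{\mathbb{C}}$ rather than in $\operatorname{Ker}^{\mathbb{C}}_{-\lambda(x)}$ itself, but this is harmless since the stated conclusion already carries the $\mathcal{K}^{\mathbb{C}}$ summand.
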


Here at each point $x \in M$ we complexify the cotangent space $T^* M$ and extend $\mathcal{A}(x)$ and $\mathcal{B}(x)$ to the skew-symmetric forms $\mathcal{A}^{\mathbb{C}} (x)$ and $\mathcal{B}^{\mathbb{C}} (x)$ on $(T^*M)^{\mathbb{C}}$. Then $\mathcal{K}^{\mathbb{C}}$ is the complexification of the core distribution $\mathcal{K}$ and  \[\operatorname{Ker}_{-\lambda(x)}^{\mathbb{C}} = \operatorname{Ker}\left(\mathcal{A}^{\mathbb{C}}(x) - \lambda(x) \mathcal{B}^{\mathbb{C}}(x)\right). \]

\begin{proof}[Proof of Lemma~\ref{L:EigenLemmaRealConj}]
Immediately follows from Theorems~\ref{T:BiPoissRedCoreMantle} and \ref{T:TrivKronFact} about local structure of compatible Poisson brackets. Lemma~\ref{L:EigenLemmaRealConj} is proved. 
\end{proof}

\section{Main result}

\begin{theorem} \label{Th:CasEigenComm} Let $\mathcal{P} = \left\{ \mathcal{A} + \lambda \mathcal{B} \right\}$ be a Poisson pencil on a manifold $M$ and $p_{\mathcal{P}}(\lambda)$ be its characteristic polynomial. Assume that $\deg p_{\mathcal{P}}(\lambda) = \operatorname{const}$ on $M$ and  \begin{equation} \label{Eq:CharOnM} p_{\mathcal{P}} (\lambda) = p_0 + p_1 \lambda + \dots + p_{N-1} \lambda^{N-1} + \lambda^N.\end{equation} Let $\mathcal{A}_{\mu_1}, \dots \mathcal{A}_{\mu_d} \in \mathcal{P}, \mu_j \in \bar{\mathbb{C}}$, be any $d\geq 0$ Poisson brackets that are regular on an open dense subset of $M$. Then for any Casimir functions $f_{j,k} \in \mathcal{C}\left( \mathcal{A}_{\mu_j}\right), j=1, \dots, d, k = 1,\dots, m_j,$ the set of functions \[ \mathcal{F} = \left\{ f_{1,1}, \dots, f_{d, m_d} \right\} \cup \left\{p_0, \dots, p_{N-1} \right\}\] are in involution w.r.t. all brackets from the pencil $\mathcal{A}_{\lambda} = \mathcal{A} + \lambda \mathcal{B}, \lambda \in \bar{\mathbb{C}}$.
\end{theorem}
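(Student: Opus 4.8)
The plan is to prove that, on an open dense subset of $M$, the differentials of all functions in $\mathcal{F}$ lie in a single subspace of each $T^*_x M$ that is isotropic with respect to every form $\mathcal{A}_\lambda(x)$; the pairwise brackets $\{\cdot,\cdot\}_\lambda$ are smooth (resp.\ analytic) functions, so their vanishing on a dense set forces vanishing on all of $M$ by continuity, and it suffices to argue at a generic point. Concretely, I would first restrict to the open dense subset $M^\circ \subset M$ on which $\operatorname{rk}\mathcal{P}(x)$ is constant, each $\mathcal{A}_{\mu_j}(x)$ is regular, and the distinct eigenvalues $\lambda_1(x),\dots,\lambda_r(x)$ of $\mathcal{P}$ are locally smooth (resp.\ analytic) with constant multiplicities $n_1,\dots,n_r$. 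The hypothesis $\deg p_{\mathcal{P}} = \operatorname{const}$ keeps the eigenvalue count stable (nothing escapes to $\infty$), and Proposition~\ref{Prop:IsolEigen} then guarantees that each $\lambda_l(x)$ is an isolated eigenvalue on $M^\circ$.

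Next I would assemble three ingredients at a point $x \in M^\circ$. First, since each $\mathcal{A}_{\mu_j}(x)$ is regular, Proposition~\ref{Prop:CasimirAndCore}(1) gives $df_{j,k}(x) \in \mathcal{K}_x$, the core subspace. Second, because $\operatorname{rk}\mathcal{P}$ is constant and each $\lambda_l$ is isolated, Lemma~\ref{L:EigenLemma} yields $d\lambda_l(x) \in \operatorname{Ker}\mathcal{A}_{-\lambda_l(x)}(x)$. Third, from the factorization $p_{\mathcal{P}}(\lambda) = \prod_{l=1}^{r}(\lambda - \lambda_l(x))^{n_l}$ and the chain rule, each coefficient satisfies $dp_i(x) \in \operatorname{span}\{d\lambda_1(x),\dots,d\lambda_r(x)\}$, since $p_i$ is a multiplicity-weighted symmetric function of the eigenvalues.

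The conclusion is then pure linear algebra. I would set $U_x = \mathcal{K}_x + \operatorname{span}\{d\lambda_1(x),\dots,d\lambda_r(x)\}$. As the eigenvalues $\lambda_l$ are pairwise distinct, so are the shifts $-\lambda_l$, and Proposition~\ref{Prop:CommutLinear}(1) shows that $U_x$ is isotropic with respect to every $\mathcal{A}_\lambda(x)$, $\lambda \in \bar{\mathbb{C}}$. Since $df_{j,k}(x) \in \mathcal{K}_x \subset U_x$ and $dp_i(x) \in U_x$, every pair of functions in $\mathcal{F}$ has vanishing $\mathcal{A}_\lambda$-bracket at each $x \in M^\circ$, which gives involution on $M^\circ$ and hence on all of $M$.

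Finally, in the real case the characteristic polynomial may carry complex conjugate eigenvalues $\mu_t, \bar{\mu}_t = \alpha_t \pm i\beta_t$, and I would repeat the argument after complexifying each cotangent space and extending the forms to $\mathcal{A}^{\mathbb{C}}_\lambda, \mathcal{B}^{\mathbb{C}}_\lambda$. The real coefficients $p_i$ still lie in the complex span of the eigenvalue differentials, while Lemma~\ref{L:EigenLemmaRealConj} places $d\mu_t$ (and, by conjugation, $d\bar{\mu}_t$) into $\operatorname{Ker}_{-\mu_t}^{\mathbb{C}} + \mathcal{K}^{\mathbb{C}}$. Using that the distinct values $-\lambda_l, -\mu_t, -\bar{\mu}_t$ remain distinct, the complexified form of Proposition~\ref{Prop:CommutLinear}(1) shows the enlarged subspace $U_x^{\mathbb{C}}$ is isotropic for all $\mathcal{A}^{\mathbb{C}}_\lambda$, and restricting to the real differentials $dp_i, df_{j,k}$ recovers the real involution. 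I expect this complex-conjugate bookkeeping — verifying that the shifts stay distinct and that the real brackets are correctly recovered from the complexified ones — to be the main obstacle, whereas the real-eigenvalue core of the argument is a direct assembly of the cited results, exactly as anticipated by the remark that ``the rest of the proof is simple linear algebra.''
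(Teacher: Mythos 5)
Your proposal is correct and follows essentially the same route as the paper's proof: replace the coefficients $p_i$ by the eigenvalues (the paper does this via Proposition~\ref{Prop:EigenSpan}, you via Vieta's formulas, which amounts to the same thing), place $df_{j,k}$ in the core and $d\lambda_l$ in $\operatorname{Ker}\mathcal{A}_{-\lambda_l}$ using Lemma~\ref{L:EigenLemma} (resp.\ Lemma~\ref{L:EigenLemmaRealConj} after complexification in the real non-real-eigenvalue case), apply Proposition~\ref{Prop:CommutLinear} at a generic point, and finish by continuity. Your explicit invocation of Proposition~\ref{Prop:IsolEigen} to justify the isolated-eigenvalue hypothesis is a small point the paper leaves implicit, but otherwise the arguments coincide.
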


\begin{remark} In Theorem~\ref{Th:CasEigenComm}:

\begin{enumerate}

\item We forbid non-finite eigenvalues $\lambda = \infty$ (formally, it is not a root of  \eqref{Eq:CharOnM}). In other words, we assume that $\operatorname{rk}\mathcal{B}(x) = \operatorname{rk}\mathcal{P}(x)$ for all $x \in M$.

\item We formally allow the pure Kronecker case, i.e. $N = 0$. Then, there is no characteristic polynomial $p_{\mathcal{P}}(\lambda)$ and we get a well-known statement that the Casimir functions of Poisson brackets $f_{j,k}$ are in bi-involution.

\item We also formally allow the pure Jordan case, i.e. $d = 0$. We get that the coefficients $p_j$ are in bi-involution.

\end{enumerate}

\end{remark}

It will be convenient to replace the coefficients $p_j$ with the roots $\lambda_j$ of the polynomial. Recall that the $n$ roots of a polynomial of degree $n$ depend continuously on the coefficients. 

\begin{proposition}\label{Prop:EigenSpan} Let $p(x, \lambda) = p_0(x) + p_1(x) \lambda + \dots + p_{N-1}(x) \lambda^{N-1} + \lambda^N$ be a polynomial in $\lambda$ on a manifold $M$. All $p_j(x)$ are smooth functions on $M$. Assume that in a neighborhood $Ox_0$ of a point $x_0 \in M$ the roots $\lambda_1(x), \dots, \lambda_N(x)$ have constant multiplicities\footnote{In other words, if two roots are equal at one point $\lambda_j(x_0) = \lambda_j(x_0)$, then they are equal in a neighborhood of that point $Ox_0$.}. Then the roots $\lambda_j(x)$ are analytic functions of the coefficients $p_0(x), \dots, p_{N-1}(x)$. Moreover, in the complex case  \begin{equation} \label{Eq:EqualSpan}  \operatorname{span} \left\{ dp_0, \dots, dp_{N-1}  \right\}  =  \operatorname{span} \left\{ d\lambda_1, \dots, d\lambda_N \right\} \end{equation} at each point of $Ox_0$. In the real case, assume that the first $2t$ roots are complex conjugate pairs $\alpha_j(x) \pm \beta_j(x)$ and the other roots $\lambda_j(x), j > 2t$, are real. Then \begin{equation} \label{Eq:RealSpan}  \operatorname{span} \left\{ dp_0, \dots, dp_{N-1} \right\} =  \operatorname{span} \left\{ d\alpha_1, d \beta_1 \dots, d\alpha_t,d \beta_t, d\lambda_{2t+1} \dots, d\lambda_{N} \right\}. \end{equation} \end{proposition}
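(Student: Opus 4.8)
The plan is to reduce the span identities \eqref{Eq:EqualSpan} and \eqref{Eq:RealSpan} to two elementary functional relations between the coefficients and the roots, and then to deduce each identity by a double inclusion. The two relations are: (i) every coefficient is a polynomial in the roots, so that each $dp_k$ lies in the span of the root differentials; and (ii) every root is an analytic function of the coefficients, so that each root differential lies in $\operatorname{span}\{dp_0, \dots, dp_{N-1}\}$. Relation (i) is Vieta's formula and is immediate; the whole content of the proposition is relation (ii), namely the analyticity of the roots under the constant-multiplicity hypothesis, together with the bookkeeping needed to pass to real and imaginary parts in the real case.

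First I would establish relation (ii). Group the roots into distinct values $\zeta_1(x), \dots, \zeta_s(x)$ with (constant) multiplicities $m_1, \dots, m_s$, so that $p(x,\lambda) = \prod_{i=1}^{s}(\lambda - \zeta_i(x))^{m_i}$. For a simple root ($m_i = 1$), analyticity in the coefficients is the implicit function theorem, since $\partial_\lambda p \neq 0$ there. For a multiple root the derivative vanishes and the implicit function theorem fails, so instead I would use the argument principle. Since the $\zeta_i(x_0)$ are distinct, choose in the $\lambda$-plane small pairwise-disjoint circles $\gamma_i$, each enclosing only $\zeta_i(x_0)$; by continuity of the roots and constant multiplicity, for coefficients near the base value $\gamma_i$ still encloses exactly $\zeta_i(x)$ with multiplicity $m_i$, and $p(x,\lambda) \neq 0$ on $\gamma_i$. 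Then
\[ m_i\,\zeta_i(x) = \frac{1}{2\pi i}\oint_{\gamma_i} \lambda\,\frac{\partial_\lambda p(x,\lambda)}{p(x,\lambda)}\, d\lambda. \]
The integrand is a rational function of $\lambda$ whose numerator and denominator depend polynomially on $(p_0, \dots, p_{N-1})$ and which is nonvanishing on the fixed contour for coefficients in a neighborhood; hence the integral is an analytic function of $(p_0, \dots, p_{N-1})$. This shows that each $\zeta_i$, and therefore each $\lambda_j$, is an analytic function of the coefficients. The main obstacle is precisely this step: without the constant-multiplicity assumption the roots need not even be smooth (as $\lambda^2 = x$ shows), and the vanishing of $\partial_\lambda p$ at multiple roots blocks the naive implicit-function argument, which is exactly why the contour-integral formula is needed.

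With (i) and (ii) in hand, the complex case \eqref{Eq:EqualSpan} follows at once. By Vieta, $p_{N-k} = (-1)^k e_k(\lambda_1, \dots, \lambda_N)$ is a polynomial in the roots, so $dp_k \in \operatorname{span}\{d\lambda_1, \dots, d\lambda_N\}$, giving one inclusion; by (ii) each $\lambda_j$ is an analytic function of $(p_0, \dots, p_{N-1})$, so $d\lambda_j \in \operatorname{span}\{dp_0, \dots, dp_{N-1}\}$, giving the reverse inclusion. Repeated roots merely contribute equal differentials, so the right-hand span is in fact spanned by the distinct-root differentials $d\zeta_i$, consistently with both inclusions, and equality of the two spans follows.

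For the real case \eqref{Eq:RealSpan} I would apply the same two principles after recording the real form of Vieta. Writing $\mu_j = \alpha_j + i\beta_j$, the real quadratic factor $(\lambda - \mu_j)(\lambda - \bar\mu_j) = \lambda^2 - 2\alpha_j\lambda + (\alpha_j^2 + \beta_j^2)$ has coefficients that are polynomials in $(\alpha_j, \beta_j)$; multiplying these with the linear factors $(\lambda - \lambda_j)$ of the real roots exhibits every $p_k$ as a polynomial in $\{\alpha_1, \beta_1, \dots, \alpha_t, \beta_t, \lambda_{2t+1}, \dots, \lambda_N\}$, whence each $dp_k$ lies in the span of their differentials. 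Conversely, the real roots $\lambda_j$ are analytic in the coefficients by (ii), and the real and imaginary parts $\alpha_j = \operatorname{Re}\mu_j$, $\beta_j = \operatorname{Im}\mu_j$ of the analytic complex roots are real-analytic functions of the (real) coefficients, so $d\lambda_j, d\alpha_j, d\beta_j \in \operatorname{span}\{dp_0, \dots, dp_{N-1}\}$. The double inclusion again yields \eqref{Eq:RealSpan}, completing the proof.
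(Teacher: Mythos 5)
Your proof is correct and follows essentially the same route as the paper's: both establish the two span identities by a double inclusion, using Vieta's formulas in one direction and the analyticity of the roots as functions of the coefficients (under the constant-multiplicity hypothesis) in the other. The only difference is that where the paper simply cites Brillinger for the analyticity of the roots, you supply a self-contained proof via the argument-principle contour integral --- which is in fact the standard proof of that cited result --- so the substance is the same.
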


\begin{proof}[Proof of Proposition~\ref{Prop:EigenSpan} ] The analyticity of $\lambda_j(x)$ as functions of the coefficients is proved\footnote{Another way to prove smoothness of the roots $\lambda_j$, without using \cite{Brillinger66}, is as follows. If a root $\lambda_j$ has multiplicity $1$, then it is smooth by the Implicit Function Theorem. If locally the root $\lambda_j$ has constant multiplicity $m$, then we can reduce the problem to the previous case by  taking the derivative of the polynomial $\frac{\partial}{\partial \lambda^k } p(x, \lambda)$ until we get a root with degree $1$.} in \cite{Brillinger66}). Since $p_0, \dots, p_{N-1}$ and $\lambda_1, \dots, \lambda_N$ are analytic functions of each other\footnote{On one hand, $\lambda_j$ are analytic functions of $p_j$ by  \cite{Brillinger66}. On the other hand, we have Vieta's formulas.} we get \eqref{Eq:EqualSpan} (or \eqref{Eq:RealSpan} in the real case).  Proposition~\ref{Prop:EigenSpan} is proved. \end{proof}

\begin{remark} Proposition~\ref{Prop:EigenSpan} shows that it is often a matter of preferences, whether to consider eigenvalues $\lambda_j(x)$ or coefficients $p_j$ of a characteristic polynomial $p_{\mathcal{P}}$. In practice, the coefficients $p_j$ may possess the following advantages:

\begin{itemize}

    \item \textit{Reduced Singularity}: The coefficients of the characteristic polynomial may have fewer singularities compared to the eigenvalues.

    \item \textit{Real-Valued Behavior}: For real-coefficient polynomials, the coefficients are guaranteed to be real numbers, while some eigenvalues might have complex conjugates. 
    
\end{itemize}

 \end{remark}

\begin{proof}[Proof of Theorem~\ref{Th:CasEigenComm}] Using Proposition~\ref{Prop:EigenSpan} in a neighborhood of a generic point of $M$ we can replace coefficients $p_0, \dots, p_{N-1}$ with locally analytic (complex-valued) roots $\lambda_1(x), \dots, \lambda_N(x)$.  In a neighborhood of a generic point $\operatorname{rk} \mathcal{P}(x) = const$ (the constant here may be different for different points). 

\begin{itemize}

    \item In the real case when $\lambda(x)$ is real or in the complex case we can use Lemma~\ref{L:EigenLemma}. Almost everywhere on $M$  we have \begin{equation} \label{Eq:DLambdaKer} d \lambda_j(x) \in \operatorname{Ker} \mathcal{A}_{-\lambda_j(x)}. \end{equation}

\item In the real case for non-real eigenvalues\footnote{In the real analytic case, when there are non-real eigenvalues $\lambda_j(x) = \alpha_j(x) + i \beta_j(x)$, \eqref{Eq:DLambdaKer} holds if we complexify everything in local coordinates. } $\lambda_j(x) = \alpha_j(x) + i \beta_j(x)$ we can use Lemma~\ref{L:EigenLemmaRealConj}. For a generic $x\in M$ in the complexified cotangent space $(T^*_xM)^{\mathbb{C}}$ we have \[d\lambda_j(x) \in \operatorname{Ker}_{-\lambda_j(x)}^{\mathbb{C}} + \mathcal{K}^{\mathbb{C}}.\]
    
\end{itemize}

 The rest is simple Linear Algebra. Since $df_{j,k} \in \operatorname{Ker} \mathcal{A}_{\mu_j}$ and the brackets $\mathcal{A}_{\mu_j}$ are regular, we have $df_{j,k} \in \mathcal{K}$ almost everywhere. By 
Proposition~\ref{Prop:CommutLinear} almost everywhere on $M$ we have \begin{equation} \label{Eq:CommF} \left\{ f, g\right\}_{\lambda} = 0, \qquad \forall f, g \in \mathcal{F}, \quad \forall \lambda \in \bar{\mathbb{C}}. \end{equation} By continuity arguments \eqref{Eq:CommF} holds on all $M$. Theorem~\ref{Th:CasEigenComm}  is proved. \end{proof}

\subsection{Extended core distribution}

Globally, instead of local Casimir functions, we can use the core distribution (compare with Proposition~\ref{Prop:CasimirAndCore}). 

\begin{definition} Assume that the characteristic polynomial $p_{\mathcal{P}}(\lambda)$ of a Poisson pencil $\mathcal{P}$ on $M$ is given by \eqref{Eq:CharOnM}. Let \begin{equation} \label{Eq:ExtCore} \hat{\mathcal{K}} = \mathcal{K} \plus  \operatorname{span} \left\{ dp_0, \dots, dp_{N-1} \right\}, \end{equation} where $\mathcal{K}$ is the core distribution. We call the singular distribution $\hat{\mathcal{K}}$ the \textbf{extended core distribution} of $\mathcal{P}$. \end{definition}

The next statement is a global analogue of Theorem~\ref{Th:CasEigenComm}.

\begin{lemma} \label{L:ExtCoreLemma} Let $\mathcal{P}$ be a Poisson pencil on a manifold $M$ such that \begin{equation} \label{Eq:CondRkDeg} \operatorname{rk} \mathcal{P}(x) = \operatorname{const}, \qquad \deg p_{\mathcal{P}}(\lambda) = \operatorname{const} \end{equation} on $M$. Then the following holds: 

\begin{enumerate}

\item The extended core distribution $\hat{\mathcal{K}}$ is a singular bi-isotropic distribution in $T^*M$.

\item Moreover, if $\dim \hat{\mathcal{K}} = \operatorname{const}$ on $M$, then $\hat{\mathcal{K}}$ is a regular integrable distribution.

\end{enumerate}

\end{lemma}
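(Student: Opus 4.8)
The plan is to prove both statements locally around a generic point and then extend by continuity, using the two technical eigenvalue lemmas together with the linear-algebra fact in Proposition~\ref{Prop:CommutLinear}. First I would establish the bi-isotropy claim. At a generic point $x$ the core distribution $\mathcal{K}_x$ is the core subspace of the linear pencil $\mathcal{P}(x)$, and by Proposition~\ref{Prop:CommutLinear}(1) any subspace of the form $K + \operatorname{span}\{v_1,\dots,v_D\}$ with each $v_i \in \operatorname{Ker}(\mathcal{A}+\mu_i\mathcal{B})$ is isotropic with respect to every form $\mathcal{A}+\lambda\mathcal{B}$. So it suffices to realize $\hat{\mathcal{K}}_x = \mathcal{K}_x \plus \operatorname{span}\{dp_0,\dots,dp_{N-1}\}$ as a subspace of exactly this type. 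Using Proposition~\ref{Prop:EigenSpan}, in a neighborhood of a generic point I replace $\operatorname{span}\{dp_0,\dots,dp_{N-1}\}$ by $\operatorname{span}\{d\lambda_1,\dots,d\lambda_N\}$ (or, in the real case, by the span involving $d\alpha_j, d\beta_j$ and the real $d\lambda_j$). Then Lemma~\ref{L:EigenLemma} gives $d\lambda_j(x)\in\operatorname{Ker}\mathcal{A}_{-\lambda_j(x)}$ for real (or complex-case) eigenvalues, and Lemma~\ref{L:EigenLemmaRealConj} gives $d\lambda_j(x)\in\operatorname{Ker}^{\mathbb{C}}_{-\lambda_j(x)}+\mathcal{K}^{\mathbb{C}}$ for complex conjugate pairs. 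Hence at a generic $x$ the extended core subspace is of the form $K+\operatorname{span}\{v_1,\dots,v_N\}$, and Proposition~\ref{Prop:CommutLinear}(1) yields bi-isotropy there. Since bi-isotropy is a closed condition (vanishing of $\mathcal{A}_\lambda(df,dg)$ for all $\lambda$), it extends by continuity to all of $M$, establishing part~(1).

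For part~(2) I would argue that under the hypothesis $\dim\hat{\mathcal{K}}=\operatorname{const}$ the distribution is both smooth and integrable. Smoothness follows because $\hat{\mathcal{K}}$ is locally spanned by differentials of concrete functions: by Corollary~\ref{Cor:LocSpan} (applicable since $\operatorname{rk}\mathcal{P}=\operatorname{const}$), the core distribution $\mathcal{K}$ is locally spanned near any point by differentials of local Casimir functions $f_{j,k}\in\mathcal{C}(\mathcal{A}_{\mu_j})$, while the remaining generators $dp_0,\dots,dp_{N-1}$ are differentials of the (globally defined, smooth) coefficient functions $p_0,\dots,p_{N-1}$. Thus $\hat{\mathcal{K}}$ is spanned by differentials of a fixed finite family of functions. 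Integrability then follows from the Frobenius theorem: a distribution in $T^*M$ spanned by exact one-forms $dg_1,\dots,dg_r$ that has locally constant dimension is automatically involutive, hence integrable, with integral manifolds the common level sets of (an independent subfamily of) the $g_i$.

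The main obstacle is the passage from the generic-point statement to the assertion on all of $M$, and in particular the interaction between smoothness/constant-rank assumptions and the use of the eigenvalue lemmas. Lemma~\ref{L:EigenLemma} requires the eigenvalue to be isolated and $\operatorname{rk}\mathcal{P}$ constant, while Lemma~\ref{L:EigenLemmaRealConj} holds only almost everywhere; consequently the membership relations \eqref{Eq:DLambdaKer} and its complex analog are available only on an open dense set, so the bi-isotropy conclusion from Proposition~\ref{Prop:CommutLinear} is initially valid only generically. The delicate point is that the hypothesis $\deg p_{\mathcal{P}}=\operatorname{const}$ does not force the individual eigenvalue multiplicities to be constant, so eigenvalues may collide on a closed nowhere-dense set; I would handle this by invoking Proposition~\ref{Prop:IsolEigen} on the open dense locus where multiplicities are locally constant, proving bi-isotropy there, and then closing the argument by the continuity remark already used in the proof of Theorem~\ref{Th:CasEigenComm}. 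The second, milder obstacle is verifying that the constant-dimension hypothesis in part~(2) is exactly what upgrades a singular distribution spanned by exact forms to a regular integrable one; here I would lean on Proposition~\ref{Prop:CoreDistDim}(2) as the template for the analogous argument.
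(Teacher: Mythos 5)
Your proposal is correct and follows essentially the same route as the paper: the paper disposes of part (1) in one line by combining Corollary~\ref{Cor:LocSpan} with Theorem~\ref{Th:CasEigenComm} (whose own proof is exactly the chain Proposition~\ref{Prop:EigenSpan} $\to$ Lemmas~\ref{L:EigenLemma} and \ref{L:EigenLemmaRealConj} $\to$ Proposition~\ref{Prop:CommutLinear} $\to$ continuity that you spell out), and proves part (2) by the same spanning-by-differentials plus constant-dimension argument you give. The only difference is that you inline the proof of Theorem~\ref{Th:CasEigenComm} instead of citing it.
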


\begin{proof}[Proof of Lemma~\ref{L:ExtCoreLemma} ]

\begin{enumerate}

\item By Corollary~\ref{Cor:LocSpan} $\mathcal{K}$ is "spanned by local Casimir functions". Thus, $\hat{\mathcal{K}}$ is bi-isotropic by Theorem~\ref{Th:CasEigenComm}.

\item The core distribution $\mathcal{K}$ is regular by Proposition~\ref{Prop:CoreDistDim}. Thus, locally there exists functions $f_1, \dots, f_t$ such that \[\mathcal{K} =  \operatorname{span} \left\{ df_1,\dots, df_t \right\}.\] Thus, the extended core distribution $\hat{\mathcal{K}}$ locally has the form \begin{equation} \label{Eq:ExCoreSpan} \hat{\mathcal{K}} =  \operatorname{span} \left\{  df_1, \dots, df_t, dp_0, \dots, dp_{N-1} \right\}. \end{equation} Since the dimension of $\hat{\mathcal{K}}$ is constant, it is smooth and integrable. 
\end{enumerate}

Lemma~\ref{L:ExtCoreLemma} is proved.  \end{proof}

In order to apply Lemma~\ref{L:ExtCoreLemma} we need to know $\operatorname{dim} \hat{\mathcal{K}}$. We can find it using the following statement.

\begin{proposition} \label{Prop:DimExtCore} Let $\mathcal{P}$ be an analytic Poisson pencil on $M$ such that \eqref{Eq:CondRkDeg} holds and $\lambda_1(x), \dots, \lambda_N(x)$ be the roots of the characteristic polynomial $p_{\mathcal{P}}(x)$. Assume that in a neighborhood of $x_0 \in M$ the roots $\lambda_j(x)$ have constant multiplicities and there are $D$ distinct roots $\lambda_1(x), \dots, \lambda_D(x)$. Then \begin{equation} \label{Eq:DimExtCore} \dim \hat{\mathcal{K}}_{x_0} = \dim \mathcal{K}_{x_0} + \left| \left\{ j  \, \, \bigr| \, \, d \lambda_j(x_0) \not \in \mathcal{K}_{x_0}^{\mathbb{C}}, \quad j=1,\dots, D \right\} \right|, \end{equation} where $\mathcal{K}^{\mathbb{C}}$ is the complexification of $\mathcal{K}$. \end{proposition}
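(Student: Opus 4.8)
The plan is to reduce the computation to the purely linear-algebraic statement of Proposition~\ref{Prop:CommutLinear}, applied in the complexified cotangent space $(T^*_{x_0}M)^{\mathbb{C}}$. First I would replace the span of the coefficient differentials by the span of the root differentials. By Proposition~\ref{Prop:EigenSpan} (valid in a neighborhood where all multiplicities are constant), in the complex case $\operatorname{span}\{dp_0, \dots, dp_{N-1}\} = \operatorname{span}\{d\lambda_1, \dots, d\lambda_N\}$, and in the real case the analogous identity with $d\alpha_j, d\beta_j$ holds. After complexifying and using $d\lambda_j = d\alpha_j + i\,d\beta_j$, $d\bar\lambda_j = d\alpha_j - i\,d\beta_j$, both cases give
\[ \hat{\mathcal{K}}^{\mathbb{C}}_{x_0} = \mathcal{K}^{\mathbb{C}}_{x_0} + \operatorname{span}_{\mathbb{C}}\{d\lambda_1(x_0), \dots, d\lambda_D(x_0)\}, \]
where only the $D$ distinct roots are kept (repeated roots have identical differentials by the constant-multiplicity hypothesis). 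Since complexification preserves dimension, $\dim_{\mathbb{R}}\hat{\mathcal{K}}_{x_0} = \dim_{\mathbb{C}}\hat{\mathcal{K}}^{\mathbb{C}}_{x_0}$ and $\dim_{\mathbb{R}}\mathcal{K}_{x_0} = \dim_{\mathbb{C}}\mathcal{K}^{\mathbb{C}}_{x_0}$, so it suffices to compute the complex dimension.

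Next I would locate each $d\lambda_j(x_0)$ inside a kernel of the pencil. The hypotheses \eqref{Eq:CondRkDeg} give constant rank, and Proposition~\ref{Prop:IsolEigen} together with the constant-multiplicity assumption shows every eigenvalue is isolated; hence for a real root $\lambda_j$ Lemma~\ref{L:EigenLemma} (read in the complexification) gives $d\lambda_j(x_0) \in \operatorname{Ker}(\mathcal{A}^{\mathbb{C}} - \lambda_j(x_0)\mathcal{B}^{\mathbb{C}})$, while for a non-real root in the real case Lemma~\ref{L:EigenLemmaRealConj} gives the weaker membership $d\lambda_j(x_0) \in \operatorname{Ker}^{\mathbb{C}}_{-\lambda_j(x_0)} + \mathcal{K}^{\mathbb{C}}$. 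In either case I write $d\lambda_j(x_0) = w_j + k_j$ with $w_j \in \operatorname{Ker}(\mathcal{A}^{\mathbb{C}} - \lambda_j(x_0)\mathcal{B}^{\mathbb{C}})$ and $k_j \in \mathcal{K}^{\mathbb{C}}_{x_0}$. Since $k_j$ lies in the core, replacing $d\lambda_j$ by $w_j$ leaves the sum unchanged,
\[ \mathcal{K}^{\mathbb{C}}_{x_0} + \operatorname{span}_{\mathbb{C}}\{d\lambda_1, \dots, d\lambda_D\} = \mathcal{K}^{\mathbb{C}}_{x_0} + \operatorname{span}_{\mathbb{C}}\{w_1, \dots, w_D\}, \]
and moreover $d\lambda_j(x_0) \notin \mathcal{K}^{\mathbb{C}}_{x_0}$ if and only if $w_j \notin \mathcal{K}^{\mathbb{C}}_{x_0}$.

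Now I would invoke Proposition~\ref{Prop:CommutLinear} for the complexified linear pencil $\mathcal{P}(x_0)^{\mathbb{C}}$, whose core subspace coincides with $\mathcal{K}^{\mathbb{C}}_{x_0}$ (complexification commutes with the Jordan--Kronecker decomposition and preserves regular values). With the distinct values $\mu_j = -\lambda_j(x_0)$ — distinct precisely because $\lambda_1, \dots, \lambda_D$ are the distinct roots — and the vectors $w_j \in \operatorname{Ker}(\mathcal{A}^{\mathbb{C}} + \mu_j\mathcal{B}^{\mathbb{C}})$, part (3) of that proposition yields
\[ \dim_{\mathbb{C}}\left(\mathcal{K}^{\mathbb{C}}_{x_0} + \operatorname{span}_{\mathbb{C}}\{w_1, \dots, w_D\}\right) = \dim_{\mathbb{C}}\mathcal{K}^{\mathbb{C}}_{x_0} + \left|\{j : w_j \notin \mathcal{K}^{\mathbb{C}}_{x_0}\}\right|. \]
Combining this with the two reductions above gives \eqref{Eq:DimExtCore}.

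The main obstacle is the real case with complex-conjugate roots: there Lemma~\ref{L:EigenLemmaRealConj} only places $d\lambda_j$ in the kernel \emph{modulo} the core, so the care needed is in checking that this weaker membership still feeds Proposition~\ref{Prop:CommutLinear} correctly and, crucially, that the defect count $|\{j : d\lambda_j \notin \mathcal{K}^{\mathbb{C}}\}|$ is unchanged when $d\lambda_j$ is adjusted by the core vector $k_j$. A secondary point to verify is that the core of the complexified pencil is exactly $\mathcal{K}^{\mathbb{C}}_{x_0}$, which is what legitimizes the choice $K = \mathcal{K}^{\mathbb{C}}_{x_0}$ in Proposition~\ref{Prop:CommutLinear}.
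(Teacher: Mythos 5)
Your proposal is correct and follows essentially the same route as the paper's proof: complexify at $x_0$, use Proposition~\ref{Prop:EigenSpan} to trade $\operatorname{span}\{dp_j\}$ for $\operatorname{span}\{d\lambda_j\}$, place each $d\lambda_j(x_0)$ in $\operatorname{Ker}\mathcal{A}^{\mathbb{C}}_{-\lambda_j(x_0)}$ modulo the core via Lemma~\ref{L:EigenLemma} (resp.\ Lemma~\ref{L:EigenLemmaRealConj}), and conclude with part (3) of Proposition~\ref{Prop:CommutLinear}. You are in fact more careful than the paper on the one delicate point (adjusting $d\lambda_j$ by a core vector $k_j$ and checking the defect count is unaffected), but this is an elaboration of the same argument, not a different one.
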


\begin{proof}[Proof of Proposition~\ref{Prop:DimExtCore}] Obviously, we can complexify everything at the point $x_0$ and consider vectors in $\left(T^*_{x_0}M\right)^{\mathbb{C}}$. By Proposition~\ref{Prop:EigenSpan} we have \[ \dim \hat{\mathcal{K}}_{x_0} = \dim \hat{\mathcal{K}}^{\mathbb{C}}_{x_0} = \dim \left( \mathcal{K}^{\mathbb{C}} \plus  \operatorname{span} \left\{ d\lambda_1, \dots, d\lambda_D \right\} \right).\] By Propositions~\ref{L:EigenLemma} (and  Proposition~\ref{L:EigenLemmaRealConj} in the real case) if  $\lambda_j(x_0) \not \in \hat{\mathcal{K}}^{\mathbb{C}}_{x_0}$, then $ d\lambda_j(x_0) \in \operatorname{Ker} \mathcal{A}^{\mathbb{C}}_{-\lambda_j(x_0)}$. Thus, \eqref{Eq:DimExtCore} follows from Proposition~\ref{Prop:CommutLinear}. Proposition~\ref{Prop:DimExtCore} is proved.  \end{proof}

Obviously, in a neighborhood of a generic point \eqref{Eq:CondRkDeg} is satisfied and  $\operatorname{dim}\hat{\mathcal{K}}$, given by \eqref{Eq:ExCoreSpan}, is constant. Thus, we get the following.  

\begin{corollary}  Let $\mathcal{P}$ be a Poisson pencil on $M$. In a neighborhood of a generic point the extended core distribution $\hat{\mathcal{K}}$ is a  regular integrable bi-isotropic distribution.
\end{corollary}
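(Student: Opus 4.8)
The plan is to deduce the corollary from Lemma~\ref{L:ExtCoreLemma} by exhibiting an open dense subset of $M$ on which all of that lemma's hypotheses hold simultaneously. The three relevant integer-valued functions are $\operatorname{rk}\mathcal{P}(x)$, $\deg p_{\mathcal{P}}(x)$ and $\dim\hat{\mathcal{K}}_x$, and the key observation is that each of them is lower semicontinuous, hence locally constant on an open dense set; intersecting these three sets yields the desired neighborhood of a generic point.

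First I would handle $\operatorname{rk}\mathcal{P}(x)$ and $\deg p_{\mathcal{P}}(x)$. Since $\operatorname{rk}\mathcal{P}(x) = \max_{\lambda}\operatorname{rk}(\mathcal{A}+\lambda\mathcal{B})(x)$ is a maximum of ranks of forms depending smoothly (analytically) on $x$, and rank is lower semicontinuous because a nonvanishing minor stays nonvanishing nearby, the function $\operatorname{rk}\mathcal{P}(x)$ attains its maximal value on an open dense set, where it is locally constant. The same semicontinuity argument applied to the Pfaffians of the diagonal minors defining $p_{\mathcal{P}}$ shows that $\deg p_{\mathcal{P}}(x)$ is locally constant on an open dense subset as well. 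Intersecting, I obtain an open dense set $U_1 \subseteq M$ on which condition \eqref{Eq:CondRkDeg} holds.

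Next, restricting to $U_1$, Proposition~\ref{Prop:CoreDistDim} guarantees that $\mathcal{K}$ is a regular distribution, and by Corollary~\ref{Cor:LocSpan} it is locally spanned by the differentials of Casimir functions $f_1, \dots, f_t$, so that $\hat{\mathcal{K}}$ has the explicit form \eqref{Eq:ExCoreSpan}. The dimension of a span of differentials is again lower semicontinuous, so $\dim\hat{\mathcal{K}}_x$ is locally constant on a further open dense subset $U_2 \subseteq U_1$. On $U_2$ all three hypotheses of Lemma~\ref{L:ExtCoreLemma} are satisfied, and part (2) of that lemma gives that $\hat{\mathcal{K}}$ is a regular, integrable and bi-isotropic distribution in a neighborhood of each point of $U_2$, which is precisely the assertion of the corollary.

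The only point requiring care is that each of the three quantities is genuinely locally constant on an open dense set, rather than merely on the (possibly non-dense) locus where it attains its maximum. This is exactly the standard fact that an integer-valued lower semicontinuous function on a manifold is locally constant on an open dense subset, together with the observation that a finite intersection of open dense sets is open dense. I expect this semicontinuity bookkeeping, and not any new substantive computation, to be the main thing to get right.
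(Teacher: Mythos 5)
Your proposal is correct and follows exactly the route the paper takes: the paper's own justification is the one-line remark that near a generic point \eqref{Eq:CondRkDeg} holds and $\dim\hat{\mathcal{K}}$ given by \eqref{Eq:ExCoreSpan} is constant, after which Lemma~\ref{L:ExtCoreLemma} applies; you have simply spelled out the standard semicontinuity bookkeeping behind that ``obviously''. The only cosmetic point is that $\deg p_{\mathcal{P}}(x)$, being the degree of a gcd, is most naturally seen as semicontinuous in the opposite direction from the rank, but either direction yields local constancy on an open dense set, so nothing is affected.
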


\section{Completeness criterion} \label{S:CompCrit}

\begin{definition} Let $\mathcal{P}$ be a Poisson pencil on a manifold $M$. We say that a singular distribution distribution $\mathcal{D}$ in the cotangent bundle $T^*M$ is \textbf{complete} if there exists an open dense $U \subset M$ such that $\mathcal{D}$ is a (smooth) regular distribution on $U$ and for any point $x \in U$ the following two conditions hold:

\begin{enumerate}

\item $\mathcal{D}$ is bi-isotropic, i.e. \[\mathcal{P}(x) \bigr|_{\mathcal{D}_x} \equiv 0.\] 

\item The dimension of $\mathcal{D}$ is \begin{equation} \label{Eq:DimCompleteCond} \dim \mathcal{D}_x = \dim M - \frac{1}{2}\operatorname{rk} \mathcal{P}(x).\end{equation}

\end{enumerate}

\end{definition}

\begin{theorem} \label{Th:CompCrit} Let $\mathcal{P}$ be a Poisson pencil on $M$ such that such that \[ \operatorname{rk} \mathcal{P}(x) = \operatorname{const}, \qquad \deg p_{\mathcal{P}}(\lambda) = \operatorname{const} \] on $M$. Then for any point $x_0 \in M$ the following conditions are equivalent:

\begin{enumerate}

\item \label{C:1} The extended core distribution $\hat{\mathcal{K}}$ is complete in a neighborhood of $x_0$.

\item \label{C:2}  The extended core distribution has maximal possible dimension at $x_0$: \begin{equation} \label{Eq:DimExtPointCrit} \dim \hat{\mathcal{K}}_{x_0} = \dim M - \frac{1}{2} \operatorname{rk} \mathcal{P}_{x_0}.\end{equation}

\item \label{C:3}  In the Jordan--Kronecker decomposition of $\mathcal{P}(x_0)$
all Jordan blocks are $2 \times 2$, all  eigenvalues $\lambda_1(x_0), \dots, \lambda_{N}(x_0)$ are distinct and \[ d \lambda_j (x_0) \not \in \mathcal{K}^{\mathbb{C}}_{x_0}.\] 
\end{enumerate}
    
\end{theorem}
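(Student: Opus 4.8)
The plan is to establish the chain of equivalences by treating $(2)\Leftrightarrow(3)$ as a pointwise dimension count and $(1)\Leftrightarrow(2)$ as a semicontinuity statement, with the maximal dimension of a bi-isotropic subspace serving as the common yardstick. First I would record the elementary linear-algebra bound underlying the definition of completeness: for a single skew form $\omega$ on a space $W$ of rank $r$, any isotropic subspace has dimension at most $\dim W-\tfrac12 r$ (push down to the symplectic quotient $W/\operatorname{Ker}\omega$, where isotropic subspaces have dimension at most $r/2$, and note that an isotropic subspace can always be enlarged by the radical). Applying this to a regular form $\mathcal{A}_{\lambda}(x)$ of the pencil, every bi-isotropic subspace of $(T^*_xM,\mathcal{P}(x))$ has dimension at most $\dim M-\tfrac12\operatorname{rk}\mathcal{P}(x)$. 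Since $\hat{\mathcal{K}}$ is bi-isotropic by Lemma~\ref{L:ExtCoreLemma}, the right-hand side of \eqref{Eq:DimExtPointCrit} is exactly the largest value $\dim\hat{\mathcal{K}}_{x_0}$ can take, so condition~\eqref{C:2} says precisely that this maximum is attained at $x_0$.

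Next I would prove $(2)\Leftrightarrow(3)$ by a direct computation. Proposition~\ref{Prop:CoreDistDim} gives $\dim\mathcal{K}_{x_0}=\dim M-\tfrac12\operatorname{rk}\mathcal{P}(x_0)-\deg p_{\mathcal{P}(x_0)}$, and Proposition~\ref{Prop:DimExtCore} gives $\dim\hat{\mathcal{K}}_{x_0}=\dim\mathcal{K}_{x_0}+C$, where $C=\left|\{\,j:\ d\lambda_j(x_0)\notin\mathcal{K}^{\mathbb{C}}_{x_0}\,\}\right|$ ranges over the $D$ distinct roots. Hence \eqref{Eq:DimExtPointCrit} holds iff $C=\deg p_{\mathcal{P}(x_0)}$. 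Writing the Jordan part of the Jordan--Kronecker decomposition of $\mathcal{P}(x_0)$ as $\bigoplus_{b=1}^{B} V_{J_{\mu_b,2n_b}}$, so that $\deg p_{\mathcal{P}(x_0)}=\sum_{b=1}^{B} n_b$, I would observe the inequality chain $C\le D\le B\le\sum_b n_b=\deg p_{\mathcal{P}(x_0)}$: the first because $C$ counts a subset of the $D$ distinct roots, the second because distinct eigenvalues belong to distinct Jordan blocks, and the third because each $n_b\ge 1$. The equality $C=\deg p_{\mathcal{P}(x_0)}$ forces equality at every step, which is exactly $(3)$: $B=\sum_b n_b$ means every $n_b=1$ (all Jordan blocks $2\times2$); $D=B$ means the eigenvalues are pairwise distinct; and $C=D$ means $d\lambda_j(x_0)\notin\mathcal{K}^{\mathbb{C}}_{x_0}$ for every $j$.

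Finally I would prove $(1)\Leftrightarrow(2)$. For $(2)\Rightarrow(1)$: because $\mathcal{K}$ is regular by Proposition~\ref{Prop:CoreDistDim} and the coefficients $p_j$ are smooth, the function $x\mapsto\dim\hat{\mathcal{K}}_x=\operatorname{rank}\{df_1,\dots,df_t,dp_0,\dots,dp_{N-1}\}$ is lower semicontinuous, while the bound above shows it never exceeds $\dim M-\tfrac12\operatorname{rk}\mathcal{P}(x)$, which is constant by hypothesis. Thus the locus where this maximum is attained is open; if it contains $x_0$, i.e. condition~\eqref{C:2} holds, then $\dim\hat{\mathcal{K}}$ is constant and equal to the maximum on a whole neighborhood of $x_0$, so $\hat{\mathcal{K}}$ is a regular integrable bi-isotropic distribution there by Lemma~\ref{L:ExtCoreLemma}, hence complete. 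For $(1)\Rightarrow(2)$: completeness in a neighborhood of $x_0$ means $\hat{\mathcal{K}}$ is a regular distribution of dimension $\dim M-\tfrac12\operatorname{rk}\mathcal{P}$ there, so evaluating at $x_0$ yields \eqref{Eq:DimExtPointCrit}. I expect the main obstacle to be the equality analysis in $(2)\Leftrightarrow(3)$, namely pinning down the chain $C\le D\le B\le\deg p_{\mathcal{P}}$ and verifying that simultaneous equality is equivalent to the three geometric conditions in $(3)$, together with making the semicontinuity argument for $(2)\Rightarrow(1)$ precise, since it is what upgrades the pointwise dimension equality at $x_0$ to regularity, integrability, and completeness on a full neighborhood.
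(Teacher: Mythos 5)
Your proposal is correct and follows essentially the same route as the paper: the paper also derives $(1)\Leftrightarrow(2)$ from lower semicontinuity of $\dim\hat{\mathcal{K}}$ together with Lemma~\ref{L:ExtCoreLemma}, and $(2)\Leftrightarrow(3)$ from Proposition~\ref{Prop:DimExtCore}. The paper states these two steps in a single line, whereas you usefully spell out the details it leaves implicit, in particular the counting chain $C\le D\le B\le\deg p_{\mathcal{P}}$ behind the second equivalence.
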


\begin{proof}[Proof of Theorem~\ref{Th:CompCrit}] Obviously, under the conditions of the theorem, $\dim \hat{\mathcal{K}}$ is a  lower semicontinuous  functions. Thus, conditions \ref{C:1} and \ref{C:2} are equivalent by  Lemma~\ref{L:ExtCoreLemma}. Conditions \ref{C:2} and \ref{C:3}  are equivalent by Proposition~\ref{Prop:DimExtCore}. Theorem~\ref{Th:CompCrit}  is proved. \end{proof}

\begin{remark} In the analytic case if, roughly speaking, some equality holds locally, then it also holds globally. Thus, if $M$ is real or complex analytic and Theorem~\ref{Th:CompCrit} is satisfied at a point $x_0$ such that $\operatorname{rk}\mathcal{P}(x_0) = \operatorname{rk} \mathcal{P}$, then the extended core distribution $\hat{\mathcal{K}}$ is complete on $M$.
\end{remark}

\subsection{Jordan and Kronecker cases}

We say that a Poisson pencil $\mathcal{P}$ on a manifold $M$ is of \textbf{Kronecker type} (of \textbf{Jordan type}) if at a generic point $x \in M$ the Jordan--Kronecker decomposition of $\mathcal{P}(x)$ has only Kronecker blocks (respectively, only Jordan blocks). Note that Theorem~\ref{Th:CompCrit} holds for Poisson pencils of Kronecker and Jordan types. In the Kronecker case $\hat{\mathcal{K}} = \mathcal{K}$ and we get a complete family of functions in bi-involution. 

\begin{corollary} If $\mathcal{P}$ is a Poisson pencil on $M$ of Kronecker type, then the core distribution $\mathcal{K}$ is complete on $M$. \end{corollary}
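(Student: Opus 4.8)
The plan is to deduce the corollary directly from Theorem~\ref{Th:CompCrit} by verifying that in the Kronecker case the equivalent conditions of that theorem are automatically satisfied. First I would observe that for a pencil of Kronecker type, the Jordan--Kronecker decomposition of $\mathcal{P}(x)$ at a generic point contains no Jordan blocks, so the characteristic polynomial $p_{\mathcal{P}}$ is trivial (degree $N = 0$), there are no eigenvalues $\lambda_j(x)$, and hence the extended core distribution collapses to the core distribution itself: $\hat{\mathcal{K}} = \mathcal{K}$. This immediately makes condition~\ref{C:3} of Theorem~\ref{Th:CompCrit} vacuously true, since there are no Jordan blocks to constrain and no eigenvalue differentials to check.

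The crux is then a dimension count. I would invoke Proposition~\ref{Prop:CoreDistDim}, which gives
\[
\dim \mathcal{K}_x = \dim M - \tfrac{1}{2}\operatorname{rk}\mathcal{P}(x) - \deg p_{\mathcal{P}(x)}.
\]
Since $\deg p_{\mathcal{P}(x)} = 0$ at a generic point in the Kronecker case, this reads $\dim \mathcal{K}_x = \dim M - \tfrac12\operatorname{rk}\mathcal{P}(x)$, which is precisely the maximality condition~\eqref{Eq:DimExtPointCrit} of Theorem~\ref{Th:CompCrit} applied to $\hat{\mathcal{K}} = \mathcal{K}$. By the equivalence of conditions~\ref{C:2} and~\ref{C:1}, the extended core distribution (equal to $\mathcal{K}$) is therefore complete in a neighborhood of such a point. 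Because $\hat{\mathcal{K}} = \mathcal{K}$ here, we conclude that the core distribution $\mathcal{K}$ itself is complete.

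One technical point I would address is the hypothesis of Theorem~\ref{Th:CompCrit}, which requires $\operatorname{rk}\mathcal{P}(x) = \operatorname{const}$ and $\deg p_{\mathcal{P}}(\lambda) = \operatorname{const}$ on $M$, whereas the corollary only assumes $\mathcal{P}$ is of Kronecker type (a genericity assumption). The resolution is that completeness is a property stated on an open dense subset $U \subset M$, so I would restrict attention to the open dense set where both $\operatorname{rk}\mathcal{P}(x)$ and $\deg p_{\mathcal{P}(x)} = 0$ are constant; on this set the theorem applies verbatim and yields completeness of $\mathcal{K}$. The main (and only mild) obstacle is simply being careful about where the constant-rank and constant-degree hypotheses hold, but since the definition of completeness already quantifies over a generic open dense subset, this causes no real difficulty. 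In short, the Kronecker case is the degenerate instance $N = 0$ of the general criterion, and the corollary is a one-line specialization once the dimension formula of Proposition~\ref{Prop:CoreDistDim} is applied.
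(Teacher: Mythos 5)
Your proposal is correct and follows the same route as the paper, which derives the corollary from Theorem~\ref{Th:CompCrit} by observing that in the Kronecker case $\hat{\mathcal{K}} = \mathcal{K}$ and the conditions of the criterion hold automatically. Your additional details --- the vacuity of condition~\ref{C:3} when there are no Jordan blocks, the dimension count via Proposition~\ref{Prop:CoreDistDim} with $\deg p_{\mathcal{P}(x)} = 0$, and the restriction to the open dense subset where the rank and degree are constant --- simply make explicit what the paper leaves implicit.
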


In the Jordan case, roughly speaking, $\hat{\mathcal{K}}$ is complete iff all eigenvalues are distinct and locally non-constant. 

\begin{corollary}  Let $\mathcal{P}$ be a Poisson pencil on $M$ of Jordan type. The extended core distribution $\hat{\mathcal{K}}$ is complete on $M$ if and only if  in the Jordan--Kronecker decomposition of $\mathcal{P}(x_0)$ of a generic point $x_0 \in M$
all Jordan blocks are $2 \times 2$, all  eigenvalues $\lambda_1(x_0), \dots, \lambda_{N}(x_0)$ are distinct and $d \lambda_j(x_0) \not = 0$.
\end{corollary}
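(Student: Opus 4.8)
The statement to prove is the final Corollary: for a Poisson pencil $\mathcal{P}$ of Jordan type, the extended core distribution $\hat{\mathcal{K}}$ is complete if and only if, at a generic point $x_0$, all Jordan blocks are $2\times 2$, all eigenvalues $\lambda_1(x_0),\dots,\lambda_N(x_0)$ are distinct, and $d\lambda_j(x_0)\neq 0$. The natural approach is to deduce this directly from Theorem~\ref{Th:CompCrit}, specializing condition~\ref{C:3} to the Jordan case. The plan is to observe that completeness of a distribution is, by definition, a generic (open-dense) property, so it suffices to check the equivalence at a generic point $x_0\in M$. At such a point we may assume $\operatorname{rk}\mathcal{P}(x)=\operatorname{const}$ and $\deg p_{\mathcal{P}}(\lambda)=\operatorname{const}$, so Theorem~\ref{Th:CompCrit} applies verbatim.

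First I would invoke Theorem~\ref{Th:CompCrit}, which tells us that $\hat{\mathcal{K}}$ is complete near $x_0$ if and only if condition~\ref{C:3} holds: all Jordan blocks are $2\times 2$, all eigenvalues are distinct, and $d\lambda_j(x_0)\notin\mathcal{K}^{\mathbb{C}}_{x_0}$. The only gap between this and the Corollary is to verify that in the Jordan case the condition $d\lambda_j(x_0)\notin\mathcal{K}^{\mathbb{C}}_{x_0}$ simplifies to $d\lambda_j(x_0)\neq 0$. Here I would use that $\mathcal{P}$ is of Jordan type: at a generic point the Jordan--Kronecker decomposition of $\mathcal{P}(x)$ has only Jordan blocks and the trivial $1\times 1$ Kronecker blocks coming from the common kernel. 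By Proposition~\ref{Cor:CoreMantle}, the core subspace $\mathcal{K}_x$ is spanned precisely by these trivial Kronecker directions, i.e.\ $\mathcal{K}_x=\operatorname{Ker}\mathcal{A}_\lambda(x)$ for every regular $\lambda$ (this is the statement that, with no nontrivial Kronecker blocks, the core equals the common kernel of all regular brackets).

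The key step is then the following observation: in the Jordan case $\deg p_{\mathcal{P}}=\operatorname{rk}\mathcal{P}$, so the hypotheses of Theorem~\ref{T:TrivKronFact} are met, and Lemma~\ref{L:EigenLemma} (together with Lemma~\ref{L:EigenLemmaRealConj} for complex-conjugate eigenvalues) gives $d\lambda_j(x_0)\in\operatorname{Ker}\mathcal{A}^{\mathbb{C}}_{-\lambda_j(x_0)}$. Since $-\lambda_j$ is an eigenvalue (not a regular value), $d\lambda_j$ lands in a Jordan-block kernel direction, which is transverse to the core. Concretely, because $\mathcal{K}^{\mathbb{C}}_{x_0}$ is the common kernel over \emph{regular} values while $d\lambda_j$ lies in the kernel at the \emph{singular} value $-\lambda_j$, the containment $d\lambda_j(x_0)\in\mathcal{K}^{\mathbb{C}}_{x_0}$ forces $d\lambda_j(x_0)$ to lie in the intersection of these, which by the Jordan-block structure (Proposition~\ref{Prop:CommutLinear}, part 2) can only happen if $d\lambda_j(x_0)=0$. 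Thus $d\lambda_j(x_0)\notin\mathcal{K}^{\mathbb{C}}_{x_0}$ is equivalent to $d\lambda_j(x_0)\neq 0$, and condition~\ref{C:3} becomes exactly the condition stated in the Corollary.

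The main obstacle is the careful verification of this last equivalence, $d\lambda_j(x_0)\notin\mathcal{K}^{\mathbb{C}}_{x_0}\iff d\lambda_j(x_0)\neq 0$, in the Jordan case. One must rule out the possibility that a nonzero $d\lambda_j$ could accidentally lie in the core. This is precisely where the Jordan-type hypothesis does the work: the absence of nontrivial Kronecker blocks means the core is just the (small) common kernel, and Proposition~\ref{Prop:CommutLinear}(2) guarantees that a vector in $\operatorname{Ker}\mathcal{A}_{-\lambda_j}$ not lying in $\mathcal{K}$ genuinely contributes to the dimension count. Once this is in place, the two directions of the iff follow immediately by reading Theorem~\ref{Th:CompCrit} forward and backward, and the passage from the local statement at a generic $x_0$ to completeness on all of $M$ is automatic from the definition of completeness. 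I would close by remarking that the analytic-case propagation (completeness at one generic point implying completeness globally) follows from the Remark after Theorem~\ref{Th:CompCrit}.
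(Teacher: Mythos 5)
Your overall route is the right one and is the same as the paper's (the paper treats this as an immediate specialization of Theorem~\ref{Th:CompCrit} and gives no separate proof): everything reduces to checking that in the Jordan case the requirement $d\lambda_j(x_0)\notin\mathcal{K}^{\mathbb{C}}_{x_0}$ from condition~\ref{C:3} becomes $d\lambda_j(x_0)\neq 0$. The problem is how you justify that reduction. You describe a pencil of Jordan type as having ``only Jordan blocks and the trivial $1\times1$ Kronecker blocks coming from the common kernel,'' and you then argue via Proposition~\ref{Prop:CommutLinear}(2) that a nonzero $d\lambda_j\in\operatorname{Ker}\mathcal{A}^{\mathbb{C}}_{-\lambda_j(x_0)}$ cannot lie in the core. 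Neither half of this is right. In the paper's definition, Jordan type means the Jordan--Kronecker decomposition at a generic point has \emph{no} Kronecker blocks at all, not even trivial ones; by Proposition~\ref{Cor:CoreMantle} the core subspace is a direct sum of core subspaces of Kronecker blocks, so $\mathcal{K}_{x_0}=0$ and the equivalence $d\lambda_j\notin\mathcal{K}^{\mathbb{C}}_{x_0}\iff d\lambda_j\neq 0$ is immediate, with nothing left to prove. Conversely, if trivial Kronecker blocks \emph{were} present (so $\mathcal{K}\neq 0$), your claimed implication would be false: at the singular value $-\lambda_j$ the kernel $\operatorname{Ker}\mathcal{A}_{-\lambda_j}$ contains the core, so a nonzero $d\lambda_j$ can perfectly well lie in $\mathcal{K}^{\mathbb{C}}$. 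Proposition~\ref{Prop:CommutLinear}(2) only says that a kernel vector \emph{not} in $K$ must sit inside a Jordan block; it provides no obstruction to a kernel vector sitting \emph{inside} $K$. That degeneration is precisely what condition~\ref{C:3} is designed to exclude, and it is the reason the corollary is stated only for pencils of Jordan type.

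The remaining ingredients of your argument are fine: restricting to a generic point where $\operatorname{rk}\mathcal{P}$ and $\deg p_{\mathcal{P}}$ are constant so that Theorem~\ref{Th:CompCrit} applies, transferring the ``all Jordan blocks $2\times 2$ with distinct eigenvalues'' clause verbatim, and noting that completeness is by definition a statement on an open dense subset. The fix is one line: replace the intersection argument by the observation that Jordan type forces $\mathcal{K}_{x_0}=0$ at a generic point.
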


\subsection{Applications of the criterion}

 Let us briefly discuss how Theorem~\ref{Th:CompCrit} can be used in practice. First, we can determine if the Jordan blocks are $2\times 2$ using the following trivial statement about the characteristic polynomial.

\begin{proposition} Let $\mathcal{L} = \left\{A + \lambda B\right\}$ be a linear pencil and $p_{\mathcal{L}}$ be its characteristic polynomial. Then the following conditions are equivalent:

\begin{enumerate}

\item In the Jordan--Kronecker decomposition of $\mathcal{L}$ all Jordan blocks are $2 \times 2$ and have distinct eigenvalues. 

\item All roots of $p_{\mathcal{L}}$ are distinct.

\item In the decomposition of $p_{\mathcal{L}}$ into irreducible factors \[p_{\mathcal{L}} = f_1^{k_1}  \cdot \dots \cdot f_t^{k_t},  \] all degrees $k_i = 1$.

\end{enumerate}

\end{proposition}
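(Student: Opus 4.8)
The plan is to reduce everything to the product formula for the characteristic polynomial recalled earlier in the paper: if the Jordan blocks of the pencil $\mathcal{L}$ are $V_{J_{\lambda_j, 2n_j}}$, $j = 1, \ldots, N$ (so the block indexed by $j$ has size $2n_j \times 2n_j$ and eigenvalue $\lambda_j$, with the $\lambda_j$ \emph{not} assumed distinct), while the Kronecker blocks contribute nothing, then
\[ p_{\mathcal{L}}(\lambda) = \prod_{j=1}^N (\lambda - \lambda_j)^{n_j}. \]
Once this formula is in hand the three conditions become three equivalent statements about this single product, and the argument is purely a matter of polynomial bookkeeping.

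First I would prove that \ref{C:1} $\Leftrightarrow$ \ref{C:2} (renumbering locally: condition (1) $\Leftrightarrow$ (2)). A $2 \times 2$ Jordan block is exactly one with $n_j = 1$, so condition (1) says precisely that every $n_j = 1$ and that the eigenvalues $\lambda_1, \ldots, \lambda_N$ are pairwise distinct. Under (1) the formula gives $p_{\mathcal{L}}(\lambda) = \prod_{j} (\lambda - \lambda_j)$ with distinct $\lambda_j$, so all roots are simple, which is (2). For the converse I would argue the contrapositive: if (1) fails, then either some $n_j \geq 2$, in which case $(\lambda - \lambda_j)^{n_j}$ contributes a root of multiplicity at least two, or two distinct indices $j \neq j'$ satisfy $\lambda_j = \lambda_{j'}$, in which case this common value is a root of multiplicity at least $n_j + n_{j'} \geq 2$. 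In both cases $p_{\mathcal{L}}$ has a repeated root, so (2) fails.

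Next I would prove (2) $\Leftrightarrow$ (3), which is the standard fact that a polynomial over a field of characteristic zero is squarefree iff it has no repeated roots in the algebraic closure. Writing $p_{\mathcal{L}} = f_1^{k_1} \cdots f_t^{k_t}$ with the $f_i$ distinct monic irreducibles, I would use that in characteristic zero every irreducible polynomial is separable, so each $f_i$ has distinct roots in the algebraic closure and distinct factors $f_i$ share no roots. Consequently the multiplicity of any root $\rho$ of $p_{\mathcal{L}}$ equals the exponent $k_i$ of the unique factor $f_i$ with $f_i(\rho) = 0$. Therefore all roots of $p_{\mathcal{L}}$ are distinct iff every $k_i = 1$, which is exactly the equivalence (2) $\Leftrightarrow$ (3). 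This treats the complex case (where the $f_i$ are linear) and the real case (where some $f_i$ are the irreducible quadratics arising from complex conjugate eigenvalue pairs) uniformly.

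There is no genuine obstacle here — the statement is, as advertised, trivial once the product formula is available. The only point requiring a little care is the bookkeeping in (1) $\Leftrightarrow$ (2): the condition ``all Jordan blocks are $2 \times 2$'' controls the individual exponents $n_j$, whereas ``distinct eigenvalues'' is the logically independent condition that no two Jordan blocks share an eigenvalue, and a repeated root of $p_{\mathcal{L}}$ can arise from either failure. Keeping these two sources of multiplicity separate is the whole content of the argument.
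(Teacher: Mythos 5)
Your proof is correct: the paper states this proposition without any proof (describing it as a ``trivial statement''), and your derivation from the product formula $p_{\mathcal{L}}(\lambda) = \prod_{j}(\lambda-\lambda_j)^{n_j}$ recalled earlier in the paper, together with the standard squarefree/separability bookkeeping for (2) $\Leftrightarrow$ (3), is exactly the intended justification. Nothing is missing.
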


Second, it may be easier to check that the conditions of Theorem~\ref{Th:CompCrit} are not satisfied. Then $\hat{\mathcal{K}}$ is not complete. In particular, we get the following.

\begin{corollary} \label{Cor:NotComp} Let $\mathcal{P}$ be a Poisson pencil on $M$. Assume that for a generic point $x \in M$ there are Jordan blocks in the Jordan--Kronecker decomposition of $\mathcal{P}(x)$ but they are not $2 \times 2$ Jordan blocks with distinct eigenvalues. Then the extended core distribution $\hat{\mathcal{K}}$ is not complete. 
\end{corollary}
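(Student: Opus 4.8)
The plan is to prove Corollary~\ref{Cor:NotComp} by showing that the hypothesis on the Jordan blocks forces the failure of condition~\ref{C:3} in Theorem~\ref{Th:CompCrit}, which by the equivalence established there is exactly the failure of completeness. Since completeness is a condition on a generic (open dense) set, and the Jordan--Kronecker type of $\mathcal{P}(x)$ stabilizes on an open dense subset, I may freely pass to a neighborhood of a generic point $x_0$ where the hypotheses of Theorem~\ref{Th:CompCrit} hold, i.e.\ where $\operatorname{rk}\mathcal{P}(x) = \operatorname{const}$ and $\deg p_{\mathcal{P}}(\lambda) = \operatorname{const}$. The whole point is that completeness is equivalent to a purely pointwise algebraic condition on the Jordan--Kronecker decomposition, so I only need to verify that this condition is violated.

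First I would record the logical structure: condition~\ref{C:3} of Theorem~\ref{Th:CompCrit} is a conjunction of three requirements at $x_0$ --- (i) all Jordan blocks are $2\times 2$, (ii) all eigenvalues $\lambda_1(x_0),\dots,\lambda_N(x_0)$ are distinct, and (iii) $d\lambda_j(x_0) \notin \mathcal{K}^{\mathbb{C}}_{x_0}$ for each $j$. The hypothesis of the corollary states that at a generic point there is at least one Jordan block which is \emph{not} a $2\times 2$ block with a distinct eigenvalue; that is, either some Jordan block has size larger than $2\times 2$, or some eigenvalue is repeated. In either case requirement (i) or requirement (ii) fails, so the conjunction in condition~\ref{C:3} fails.

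The second step is to conclude. By the equivalence \ref{C:1} $\Leftrightarrow$ \ref{C:3} in Theorem~\ref{Th:CompCrit}, the failure of condition~\ref{C:3} at the generic point $x_0$ means that the extended core distribution $\hat{\mathcal{K}}$ is \emph{not} complete in a neighborhood of $x_0$. Since $x_0$ was a generic point and completeness would require $\hat{\mathcal{K}}$ to satisfy the dimension equality \eqref{Eq:DimCompleteCond} on an open dense subset, failure at a generic point is precisely the negation of completeness. This yields the statement of the corollary.

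The main (and essentially only) obstacle is bookkeeping rather than mathematics: I must make sure the hypothesis ``there are Jordan blocks but they are not $2\times 2$ with distinct eigenvalues'' is correctly parsed as the negation of the conjunction (i)\,$\wedge$\,(ii), and that passing from ``generic point'' in the corollary to ``the point $x_0$ where Theorem~\ref{Th:CompCrit} applies'' is legitimate. Here I rely on the fact that the Jordan--Kronecker type, the rank, and the degree of $p_{\mathcal{P}}$ are all generically constant, so the open dense set on which the corollary's hypothesis holds can be intersected with the open dense set on which the hypotheses of Theorem~\ref{Th:CompCrit} hold; on this common open dense set the equivalence applies verbatim. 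No delicate analytic estimates are needed --- the result is a direct reading of the completeness criterion.
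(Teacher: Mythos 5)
Your proposal is correct and follows exactly the route the paper intends: the corollary is stated as an immediate consequence of Theorem~\ref{Th:CompCrit}, obtained by noting that the hypothesis negates the conjunction in condition~\ref{C:3} at a generic point (after restricting to the open dense set where $\operatorname{rk}\mathcal{P}$ and $\deg p_{\mathcal{P}}$ are constant so the theorem applies), hence condition~\ref{C:2} fails there and the dimension requirement \eqref{Eq:DimCompleteCond} cannot hold on any open dense subset. Your bookkeeping of the genericity and of the logical negation is exactly what the paper leaves implicit.
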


\section{Lie algebras} \label{S:LieAlg}

In this section we briefly discuss how the results of this paper can be applied for some well-known commutative subalgebras of commutative subalgebras in the symmetric algebra $S(\mathfrak{g})$ of a finite-dimensional Lie algebra $\mathfrak{g}$.

Let $\mathfrak{g}$ be a finite-dimensional Lie algebra and $\mathfrak{g}^*$ be its dual space. There are two natural Poisson brackets on $\mathfrak{g}^*$:

\begin{enumerate}

    \item Linear \textbf{Lie--Poisson} bracket $\mathcal{A}_x$, given by \[ \left\{ f, g\right\} = \langle x, [df(x), dg(x) ] \rangle, \quad x \in \mathfrak{g}^*, \quad f,g:\mathfrak{g}^* \to \mathbb{C};\]

    \item Take $a \in \mathfrak{g}^*$. The corresponding constant  Poisson bracket $\mathcal{A}_a$ (so called \textbf{"frozen argument" bracket}) is given by \[ \left\{ f, g\right\} = \langle a, [df(x), dg(x) ] \rangle, \quad x \in \mathfrak{g}^*, \quad f,g:\mathfrak{g}^* \to \mathbb{C};\]
    
\end{enumerate}

For any $a \in \mathfrak{g}^*$ the brackets $\mathcal{A}_x$ and $\mathcal{A}_a$ are compatible. Their matrices at a point $x\in \mathfrak{g}^*$ are \begin{equation} \label{Eq:PencilMatLie} \mathcal{A}_x  = \left(\sum_k c_{ij}^k x_k \right), \qquad \mathcal{A}_a = \left(\sum_k c_{ij}^k a_k \right). \end{equation}

In \cite{BolsZhang} \textbf{Jordan--Kronecker invariants} of $\mathfrak{g}$ were introduced. Roughly speaking, these invariants are sizes of Kronecker blocks and sizes of Jordan blocks grouped by eigenvalues for a generic linear pencil $\mathcal{P}_{x,a} = \mathcal{A}_x + \lambda \mathcal{A}_a$, given by \eqref{Eq:PencilMatLie}. It was also shown that 
characteristic polynomial for the pencil $\mathcal{P}_{x,a}$ is related with the \textbf{fundamental semi-invariant} $p_{\mathfrak{g}}$ of $\mathfrak{g}$. Namely, that \[p_{\mathcal{P}_{x,a}}(\lambda) = p_{\mathfrak{g}}(x + \lambda a). \]

We discuss the following commutative subalgebras of the symmetric algebra $S(\mathfrak{g})$:

\begin{itemize}
    \item the \textbf{algebra of polynomial shifts}
$\mathcal{F}_a$ (for the definition and details see  \cite{BolsZhang});  \item the \textbf{extended Mischenko-Fomenko subalgebras} $\tilde{\mathcal{F}}_a$ (see \cite{Izosimov14}); 
    \item and the \textbf{algebra of shift of semi-invariants} $\mathcal{F}_{a}^{\textrm{si}}$ (see  \cite{Kozlov23Shift}).

\end{itemize}

Denote by
 \[ d\mathcal{F}_a = \operatorname{span} \left\{df(x), f \in \mathcal{F}_a \right\}\] the distribution "spanned" in $T^*\mathfrak{g}^*$ by $\mathcal{F}_a$. We also consider similar distributions $d\tilde{\mathcal{F}}_a, d\mathcal{F}_a^{\operatorname{si}}$ for $\tilde{\mathcal{F}}_a$ and $\mathcal{F}_a^{\operatorname{si}}$ respectively. For a regular $a \in \mathfrak{g}^*$ the followings fact about this distributions are known:

\begin{itemize}

\item In \cite[Section 5]{BolsZhang} it was explained that the algebra of polynomial shifts $\mathcal{F}_a$ "spans" the core distribution $\mathcal{K}$ almost everywhere.

\item By \cite[Proposition 5.1]{Izosimov14} the extended Mischenko-Fomenko subalgebra $\mathcal{F}_a$ "spans" the extended core distribution $\hat{\mathcal{K}}$  almost everywhere.

\item Also, in \cite{Kozlov23Shift} it was shown that for the algebra of shifts of semi-invariants $ d\mathcal{F}_a^{\operatorname{si}} = d\tilde{\mathcal{F}}_a$ almost everywhere.

\end{itemize}

Formally, we have the following.

\begin{proposition} Let $a \in \mathfrak{g}^*$ be a regular element. Then for a generic $x\in \mathfrak{g}^*$ we have the following:

\begin{enumerate}

\item  $d\mathcal{F}_a (x) = \mathcal{K} (x) $;

\item  $d\tilde{\mathcal{F}}_a (x) = \mathcal{F}_a^{\operatorname{si}} = \hat{\mathcal{K}} (x) $.

\end{enumerate}

\end{proposition}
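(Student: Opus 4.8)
The plan is to derive the proposition directly from the three facts recalled immediately above it, after matching their genericity hypotheses to the conventions of this paper. Since "almost everywhere" and "for a generic $x$" both mean "on an open dense subset" here, the cited statements and the claimed equalities concern the same kind of locus; intersecting finitely many open dense subsets keeps the set open dense, so I would fix a common open dense $U \subset \mathfrak{g}^*$ on which $\operatorname{rk}\mathcal{P}_{x,a}$ and $\deg p_{\mathcal{P}_{x,a}}$ are constant and all three cited facts hold simultaneously.

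For item (1) I would first verify the inclusion $d\mathcal{F}_a(x) \subseteq \mathcal{K}(x)$ by hand, since it is short and pins down the identification. A generator of $\mathcal{F}_a$ is a coefficient in $\lambda$ of $f(x + \lambda a)$ for an $\operatorname{Ad}^*$-invariant $f$, so its differential at $x$ is the corresponding coefficient of $df|_{x+\lambda a}$. Because $f$ is invariant, $df|_y \in \operatorname{Ker}\mathcal{A}_{\mathrm{Lie}}(y)$, and by linearity of the Lie--Poisson structure the matrix of $\mathcal{A}_{\mathrm{Lie}}$ at $y = x+\lambda a$ equals the pencil form $\mathcal{A}_x + \lambda \mathcal{A}_a$ evaluated at $x$. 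Hence $df|_{x+\lambda a} \in \operatorname{Ker}(\mathcal{A}_x + \lambda \mathcal{A}_a)$, which for regular $\lambda$ lies in $\mathcal{K}_x$ by Proposition~\ref{Prop:CasimirAndCore}. The reverse inclusion, that these differentials fill out all of $\mathcal{K}_x$ when $a$ is regular, is exactly the spanning assertion of \cite[Section 5]{BolsZhang}, which I would invoke rather than reprove, as it rests on the Bolsinov completeness theory for the argument-shift algebra.

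For item (2) I would assemble the two equalities from the literature. The equality $d\tilde{\mathcal{F}}_a(x) = \hat{\mathcal{K}}(x)$ is \cite[Proposition 5.1]{Izosimov14}: the extended Mishchenko--Fomenko subalgebra adjoins to $\mathcal{F}_a$ precisely the shifts of the fundamental semi-invariant, whose differentials supply the span $\{dp_0,\dots,dp_{N-1}\}$ of \eqref{Eq:ExtCore} via the relation $p_{\mathcal{P}_{x,a}}(\lambda) = p_{\mathfrak{g}}(x+\lambda a)$. The equality $d\mathcal{F}_a^{\operatorname{si}}(x) = d\tilde{\mathcal{F}}_a(x)$ is the content of \cite{Kozlov23Shift}. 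Chaining them on $U$ gives $d\mathcal{F}_a^{\operatorname{si}}(x) = d\tilde{\mathcal{F}}_a(x) = \hat{\mathcal{K}}(x)$.

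The main obstacle is not a single computation but the bookkeeping of these identifications: one must check that the distributions $\mathcal{K}$ and $\hat{\mathcal{K}}$ of the present paper, defined intrinsically for the pencil $\mathcal{P}_{x,a}$, coincide with the objects implicitly used in \cite{BolsZhang}, \cite{Izosimov14}, \cite{Kozlov23Shift}, where they appear in the language of Jordan--Kronecker invariants and semi-invariants rather than core/mantle language. Once the dictionary $p_{\mathcal{P}_{x,a}}(\lambda) = p_{\mathfrak{g}}(x + \lambda a)$ and the "core equals argument-shift span" statement are in place, the proposition is a formal consequence of the three cited results.
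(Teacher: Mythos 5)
Your proposal is correct and matches the paper's approach: the paper offers no separate proof of this proposition, presenting it as a formal restatement of the three cited facts from \cite{BolsZhang}, \cite{Izosimov14} and \cite{Kozlov23Shift}, which is exactly how you assemble it (your extra hand-verification of the inclusion $d\mathcal{F}_a(x) \subseteq \mathcal{K}(x)$ via Proposition~\ref{Prop:CasimirAndCore} is a harmless bonus).
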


For the Lie algebras the completeness criterion, given by Theorem~\ref{Th:CompCrit}, can be reformulated as follows.

\begin{theorem} \label{T:Jordan1Form} Let $a \in \mathfrak{g}^*$ be a regular element. The extended Mischenko-Fomenko subalgebra $\tilde{\mathcal{F}}_a$ is complete if and only if the following $2$ conditions hold. 

\begin{enumerate}

\item The Jordan--Kronecker decomposition of a generic pencil $\left\{\mathcal{A}_x + \mathcal{A}_a\right\}$ contains just one trivial $2 \times 2$ Jordan $\lambda_i$-block for each root $\lambda_i$ of $p_{\mathfrak{g}}(x - \lambda a) =0$.

\item \label{Item:Cond2Jord} Each root $\lambda_i$ is functionally independent with the generators of $\mathcal{F}_a$, i.e. \begin{equation} \label{Eq:DLdFNotIn} d \lambda_i(x) \not \in d \mathcal{F}_a(x)\end{equation} on an open dense subset of $\mathfrak{g}^*$.

\end{enumerate}

\end{theorem}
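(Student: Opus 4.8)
The plan is to derive Theorem~\ref{T:Jordan1Form} from the general completeness criterion Theorem~\ref{Th:CompCrit} by translating its conditions through the dictionary between the Poisson pencil $\mathcal{P}_{x,a} = \left\{\mathcal{A}_x + \lambda\mathcal{A}_a\right\}$ and the invariants of $\mathfrak{g}$; once the dictionary is in place the equivalence is essentially a transcription of that criterion.

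First I would reduce the assertion to completeness of the extended core distribution $\hat{\mathcal{K}}$ of $\mathcal{P}_{x,a}$. By the preceding Proposition, for a regular $a \in \mathfrak{g}^*$ and generic $x$ we have $d\tilde{\mathcal{F}}_a(x) = \hat{\mathcal{K}}(x)$; bi-isotropy of $\hat{\mathcal{K}}$ is automatic by Theorem~\ref{Th:CasEigenComm}, so completeness of $\tilde{\mathcal{F}}_a$ amounts precisely to the dimension equality $\dim\hat{\mathcal{K}}_x = \dim\mathfrak{g}^* - \tfrac12\operatorname{rk}\mathcal{P}_{x,a}(x)$, which is exactly the number of independent commuting functions required for integrability on the generic symplectic leaves. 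Moreover, by~\eqref{Eq:PencilMatLie} the pencil form $\mathcal{A}_x + \lambda\mathcal{A}_a$ is the structure matrix $\mathcal{A}_{x+\lambda a}$ of the Lie--Poisson bracket along the affine line $x + \lambda a$; this makes it immediate that on an open dense set both $\operatorname{rk}\mathcal{P}_{x,a}$ and $\deg p_{\mathcal{P}_{x,a}}$ attain their generic values and are locally constant, so the hypotheses of Theorem~\ref{Th:CompCrit} are satisfied there and completeness of $\tilde{\mathcal{F}}_a$ is equivalent to completeness of $\hat{\mathcal{K}}$.

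Then Theorem~\ref{Th:CompCrit} asserts that $\hat{\mathcal{K}}$ is complete near a generic $x$ if and only if its condition~\ref{C:3} holds, and, $\mathfrak{g}^*$ being analytic, the remark after Theorem~\ref{Th:CompCrit} promotes this from one generic point to an open dense subset. It then remains to match condition~\ref{C:3} with the two stated conditions. Using the relation between $p_{\mathcal{P}_{x,a}}$ and the fundamental semi-invariant $p_{\mathfrak{g}}$ recalled above, the eigenvalues $\lambda_j(x)$ of the pencil are precisely the roots $\lambda_i$ of $p_{\mathfrak{g}}(x - \lambda a)=0$; a trivial $2\times 2$ Jordan block contributes a simple factor $(\lambda-\lambda_i)$ to $p_{\mathcal{P}_{x,a}}$, so the clause ``all Jordan blocks are $2\times 2$ and all eigenvalues are distinct'' is exactly condition~1 (one $2\times 2$ Jordan $\lambda_i$-block per root). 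Substituting $\mathcal{K}(x) = d\mathcal{F}_a(x)$ from the preceding Proposition turns the remaining clause $d\lambda_j(x)\not\in\mathcal{K}^{\mathbb{C}}_x$ of condition~\ref{C:3} into $d\lambda_i(x)\not\in d\mathcal{F}_a(x)$, which is condition~\ref{Item:Cond2Jord}.

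The step requiring the most care is the real bookkeeping in condition~\ref{Item:Cond2Jord}. When $p_{\mathfrak{g}}(x-\lambda a)$ has non-real roots they occur in conjugate pairs $\alpha_i\pm i\beta_i$, so $d\lambda_i = d\alpha_i + i\,d\beta_i$ must be read in the complexified cotangent space; I would check, via Lemma~\ref{L:EigenLemmaRealConj}, the real span formula~\eqref{Eq:RealSpan}, and the dimension count of Proposition~\ref{Prop:DimExtCore}, that $d\lambda_i\not\in\mathcal{K}^{\mathbb{C}}$ is the correct real interpretation of the independence condition~\eqref{Eq:DLdFNotIn} and that each conjugate pair contributes the expected two dimensions to $\hat{\mathcal{K}}$ (equivalently, that $d\alpha_i$ and $d\beta_i$ are jointly independent modulo $\mathcal{K}$). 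Apart from this complexification accounting, the argument is a direct application of Theorem~\ref{Th:CompCrit}.
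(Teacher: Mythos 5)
Your proposal is correct and follows essentially the same route as the paper: the paper gives no separate proof of Theorem~\ref{T:Jordan1Form}, presenting it as a direct reformulation of Theorem~\ref{Th:CompCrit} via the dictionary $d\mathcal{F}_a=\mathcal{K}$, $d\tilde{\mathcal{F}}_a=\hat{\mathcal{K}}$ and $p_{\mathcal{P}_{x,a}}(\lambda)=p_{\mathfrak{g}}(x+\lambda a)$, which is exactly the translation you carry out (with the added, and welcome, care about the complexified condition $d\lambda_j\not\in\mathcal{K}^{\mathbb{C}}$ in the real case).
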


\begin{remark}
In terms of \cite{Kozlov23JKRealization}, condition \eqref{Eq:DLdFNotIn} means that each $\lambda_i$ is not a core characteristic number. \end{remark}

\begin{remark} Note that we can immediately say that $\tilde{\mathcal{F}}_a$  is not complete if in the Jordan--Kronecker invariants of $\mathfrak{g}$ not all Jordan blocks are $2 \times 2$ with distinct eigenvalues (cmp. Corollary~\ref{Cor:NotComp}). \end{remark}

In \cite{Izosimov14} the following completeness criterion for the  extended Mischenko-Fomenko subalgebras $\tilde{\mathcal{F}}_a$ was described. 

\begin{itemize}
\item Let $\operatorname{Sing} \subset \mathfrak{g}^*$ be the set of singular elements and $\operatorname{Sing}_0$ be the union of all irreducible components of $\operatorname{Sing}$ that have dimension $\dim \mathfrak{g} -1$. (If $\operatorname{codim} \operatorname{Sing} \geq 2$, then $\operatorname{Sing}_0 = \emptyset$.)

\item Consider the subset
\[ \operatorname{Sing}_b = \left\{ y \in \operatorname{Sing}_0 \, \, \bigr| \,\, \mathfrak{g}_y \simeq  \operatorname{aff}(1) \oplus \mathbb{C}^{\operatorname{ind} \mathfrak{g}} \right\} \subset \operatorname{Sing}_0, \]
 where $\operatorname{aff}(1)$  is the 2-dimensional non-abelian Lie algebra and \[\mathfrak{g}_y = \left\{\xi \in \mathfrak{g} \,\, \bigr| \,\, \operatorname{ad}^*_{\xi} (x) = 0 \right\}.\] 
\end{itemize} 
 
 \begin{theorem}[\cite{Izosimov14}] \label{T:IzosTh} Let $\mathfrak{g}$  be a finite-dimensional complex
Lie algebra and $a \in \mathfrak{g}^*$ be a regular element. The extended Mischenko-Fomenko
subalgebra $\tilde{\mathcal{F}}_a$ is complete if and only if $\operatorname{Sing}_b$ is dense in $\operatorname{Sing}_0$.
\end{theorem}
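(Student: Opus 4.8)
The plan is to deduce Theorem~\ref{T:IzosTh} from the paper's own reformulated criterion, Theorem~\ref{T:Jordan1Form}, by translating its two Jordan--Kronecker conditions into the geometric statement about the singular set. First I would invoke the Proposition identifying $d\tilde{\mathcal{F}}_a = \hat{\mathcal{K}}$ almost everywhere, so that completeness of $\tilde{\mathcal{F}}_a$ coincides with completeness of the extended core distribution and is therefore governed by Theorem~\ref{T:Jordan1Form}. It then remains to show that, for a regular $a$ and generic $x$, the two conditions of Theorem~\ref{T:Jordan1Form}---that a generic pencil $\left\{\mathcal{A}_x + \lambda \mathcal{A}_a\right\}$ has only $2\times 2$ Jordan blocks with distinct eigenvalues, and that each eigenvalue satisfies $d\lambda_i(x) \notin d\mathcal{F}_a(x) = \mathcal{K}^{\mathbb{C}}_x$---hold simultaneously exactly when $\operatorname{Sing}_b$ is dense in $\operatorname{Sing}_0$.

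Second, I would set up the line--intersection dictionary. Using $p_{\mathcal{P}_{x,a}}(\lambda) = p_{\mathfrak{g}}(x + \lambda a)$, the eigenvalues $\lambda_i(x)$ are the roots of $p_{\mathfrak{g}}$ along the affine line $\ell_x = \left\{x + \lambda a \,\,\bigr|\,\, \lambda \in \bar{\mathbb{C}}\right\}$, and the points $y_i = x + \lambda_i a$ are precisely the intersections of $\ell_x$ with the divisor $\left\{p_{\mathfrak{g}} = 0\right\}$, whose reduced support is $\operatorname{Sing}_0$. By a Bertini-type genericity argument, for generic $x$ the line $\ell_x$ avoids every component of $\operatorname{Sing}$ of codimension $\geq 2$ and meets each codimension-$1$ component transversally at a generic, hence smooth, point of that component. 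Thus for generic $x$ the eigenvalues $\lambda_i$ are in bijection with the intersection points $y_i$, each a generic point of some irreducible component of $\operatorname{Sing}_0$.

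Third---and this is the crux---I would prove the local classification lemma: at a generic point $y$ of a codimension-$1$ component of $\operatorname{Sing}$, the Jordan block of the pencil at the corresponding eigenvalue is a \textbf{single} $2\times 2$ block with $d\lambda_i \notin \mathcal{K}^{\mathbb{C}}$ if and only if $\mathfrak{g}_y \simeq \operatorname{aff}(1) \oplus \mathbb{C}^{\operatorname{ind} \mathfrak{g}}$, that is, $y \in \operatorname{Sing}_b$. The tool is the Weinstein splitting of the Lie--Poisson structure at $y$: the transverse Poisson structure is the linearization governed by the isotropy algebra $\mathfrak{g}_y$, while the frozen bracket $\mathcal{A}_a$ supplies the second form of the transverse pencil. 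One shows that the $\operatorname{aff}(1)$ summand contributes exactly one $2\times 2$ Jordan block whose eigenvalue moves transversally to the symplectic leaf (so that $d\lambda_i \notin \mathcal{K}$), whereas any other isomorphism type of $\mathfrak{g}_y$ of the same dimension $\operatorname{ind}\mathfrak{g} + 2$ either enlarges the Jordan block or the root multiplicity (violating condition~$1$) or makes $\lambda_i$ a core characteristic number with $d\lambda_i \in \mathcal{K}$ (violating condition~$2$, cf.\ the Remark following Theorem~\ref{T:Jordan1Form}). I expect this transverse normal-form computation of the pair $(\mathcal{A}_y, \mathcal{A}_a)$ to be the main obstacle.

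Finally, I would assemble the equivalence. Since the isomorphism type of $\mathfrak{g}_y$ is constant on a dense open subset of each irreducible component of $\operatorname{Sing}_0$, the conditions of Theorem~\ref{T:Jordan1Form} hold for generic $x$ if and only if the generic point of every codimension-$1$ component is of type $\operatorname{aff}(1) \oplus \mathbb{C}^{\operatorname{ind}\mathfrak{g}}$, which is exactly the density of $\operatorname{Sing}_b$ in $\operatorname{Sing}_0$. Combined with the dimension count $\dim \hat{\mathcal{K}} = \dim \mathcal{K} + \left|\left\{\,j : d\lambda_j \notin \mathcal{K}^{\mathbb{C}}\,\right\}\right|$ of Proposition~\ref{Prop:DimExtCore} and the formula for $\dim \mathcal{K}$ in Proposition~\ref{Prop:CoreDistDim}, this yields precisely condition~\ref{C:3} of Theorem~\ref{Th:CompCrit}, and hence completeness, establishing Theorem~\ref{T:IzosTh}.
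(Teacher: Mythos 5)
The paper does not actually prove Theorem~\ref{T:IzosTh}: it is quoted from \cite{Izosimov14}, and the only remark made about it is that its equivalence with Theorem~\ref{T:Jordan1Form} ``was explained in \cite[Section 7]{BolsZhang}'' and also follows from \cite[Section 10.3.2]{Kozlov23JKRealization}. Your overall route --- identify $d\tilde{\mathcal{F}}_a$ with $\hat{\mathcal{K}}$, invoke Theorem~\ref{T:Jordan1Form} (equivalently, condition~\ref{C:3} of Theorem~\ref{Th:CompCrit}), and then translate the Jordan--Kronecker conditions into a statement about generic points of the codimension-one part of $\operatorname{Sing}$ via the line $\ell_x = \{x+\lambda a\}$ and the identity $p_{\mathcal{P}_{x,a}}(\lambda) = p_{\mathfrak{g}}(x+\lambda a)$ --- is exactly the route the paper gestures at, and the framing (kernel of $\mathcal{A}_x+\lambda_i\mathcal{A}_a$ at $x$ equals $\mathfrak{g}_{y_i}$ for $y_i = x+\lambda_i a$, Bertini to reduce to generic points of components of $\operatorname{Sing}_0$) is sound.

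The genuine gap is the step you yourself flag as ``the crux'': the local classification lemma asserting that, at a generic point $y$ of a codimension-one component, the $\lambda_i$-part of the Jordan--Kronecker decomposition is a single $2\times 2$ block with $d\lambda_i \notin \mathcal{K}^{\mathbb{C}}$ if and only if $\mathfrak{g}_y \simeq \operatorname{aff}(1)\oplus\mathbb{C}^{\operatorname{ind}\mathfrak{g}}$. As written this is an assertion, not a proof, and it is where the entire content of the equivalence between Theorems~\ref{T:Jordan1Form} and~\ref{T:IzosTh} lives. Note that only the easy half is immediate from linear algebra: the number of Jordan $\lambda_i$-blocks is $\tfrac12(\dim\mathfrak{g}_{y_i}-\operatorname{ind}\mathfrak{g})$, so ``a single block'' is equivalent to $\dim\mathfrak{g}_{y_i}=\operatorname{ind}\mathfrak{g}+2$, which is a condition on the dimension, not the isomorphism type, of $\mathfrak{g}_{y_i}$. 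The hard part --- that the block is $2\times 2$ rather than larger, and that $d\lambda_i$ leaves the core, precisely when the two extra dimensions form an $\operatorname{aff}(1)$ rather than an abelian (or other) extension --- requires an actual computation with the second-order behaviour of the pencil along $\ell_x$ near $y_i$; the Weinstein-splitting heuristic you invoke does not by itself control the pencil structure, since the frozen bracket $\mathcal{A}_a$ need not respect the splitting of $\mathcal{A}_y$. Until that lemma is proved (it is the subject of \cite[Section 7]{BolsZhang} and of \cite{Izosimov14} itself), the proposal is a correct reduction plus an unproved key step rather than a proof.
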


It was explained in \cite[Section 7]{BolsZhang} why the criteria from Theorems~\ref{T:Jordan1Form} and \ref{T:IzosTh} are equivalent. It also follows from the results from \cite[Section 10.3.2]{Kozlov23JKRealization}.

\end{document}